\newtheorem{thm}{Theorem}
\newtheorem{cor}[thm]{Corollary}
\newtheorem{lem}[thm]{Lemma}
\newtheorem{assum}[thm]{Assumption}
\newtheorem{prop}[thm]{Proposition}
\theoremstyle{definition}
\newtheorem{defn}[thm]{Definition}
\newtheorem{prob}[thm]{Problem}
\newcommand{\bs}[1]{\ensuremath{\boldsymbol{#1}}}
\newcommand{\bw}{\ensuremath{\bs w}\xspace}
\newcommand{\bx}{\ensuremath{\bs x}\xspace}
\newcommand{\bW}{\ensuremath{\bs W}\xspace}
\newcommand{\bX}{\ensuremath{\bs X}\xspace}
\newcommand{\bfeta}{\ensuremath{\bs \eta}\xspace}
\newcommand{\pci}[1]{\ensuremath{\bs{p}_{\boldsymbol{c}_{i}}}\xspace}
\let\Gin@viewport@code\Gin@trim\expandafter\Gread@parse@vp#1 \\}
\newcommand{\usc}{u.s.c.}
\newcommand{\Prob}{\mathbb{P}}
\newcommand{\sigmaAlg}{\mathscr{B}}
\newcommand{\targettube}{\mathscr{T}}
\newcommand{\rapi}{r_{\overline{x}_0}^\pi(\targettube)}
\newcommand{\rapiopt}{r_{\overline{x}_0}^{\pi^\ast}(\targettube)}
\newcommand{\rarho}{r_{\overline{x}_0}^\rho(\targettube)}
\newcommand{\xmax}{\overline{x}_\mathrm{max}}
\newcommand{\xanchor}{\overline{x}_\mathrm{anchor}}
\newcommand{\rarhoast}{r_{\overline{x}_0}^{\rho^\ast}(\targettube)}
\newcommand{\SRA}{\mathcal{L}^{\pi^\ast}}
\newcommand{\SRAset}{\SRA_0(\alpha, \targettube)}
\newcommand{\SRAsetkOne}{\SRA_k(\alpha_1, \targettube)}
\newcommand{\SRAsetkTwo}{\SRA_k(\alpha_2, \targettube)}
\newcommand{\SRAsetk}{\SRA_k(\alpha, \targettube)}
\newcommand{\SRAsetbeta}{\SRA_0(\beta, \targettube)}
\newcommand{\SRAsetbetai}{\SRA_0(\beta_i, \targettube)}
\newcommand{\SRAsetkbeta}{\SRA_k(\beta, \targettube)}
\newcommand{\FSRA}{\mathcal{K}^{\rho^\ast}}
\newcommand{\FSRAset}{\FSRA_0(\alpha, \targettube)}
\newcommand{\uFSRAset}{\underline{\FSRA_0}(\alpha, \targettube, \mathcal{D})}
\newcommand{\uFSRAsetcdot}{\underline{\FSRA_0}(\cdot, \targettube, \mathcal{D})}
\newcommand{\uFSRAsetOne}{\underline{\FSRA_0}(\alpha_1, \targettube, \mathcal{D}_1)}
\newcommand{\uFSRAsetTwo}{\underline{\FSRA_0}(\alpha_2, \targettube, \mathcal{D}_2)}
\newcommand{\FSRAsetbeta}{\FSRA_0(\beta, \targettube)}
\newcommand{\psibwk}{\psi_{\bw,k}}
\newcommand{\uDir}{\bar{d}}
\newcommand{\Npoly}{\vert \mathcal{D}\vert}
\newcommand{\conv}[1]{\underset{i\in \mathbb{N}_{[1,#1]}}{\mathrm{conv}}}
\newcommand{\convij}[2]{\underset{i\in \mathbb{N}_{[1,#1]},j\in \mathbb{N}_{[1,#2]}}{\mathrm{conv}}}
\newcommand{\ft}{Fourier transform}
\title{Stochastic reachability of a target tube: Theory and computation }
\author{Abraham P. Vinod\\
    Mitsubishi Electric Research Laboratories, \\
    Cambridge, MA 02139 USA\\
       \texttt{aby.vinod@gmail.com} \\
       \And
       Meeko M. K. Oishi \\
    Electrical and Computer Engineering,\\
    University of New Mexico,\\
    Albuquerque, NM 87131 USA\\
       \texttt{oishi@unm.edu}
    \thanks{This material is based upon work supported by
        the National Science Foundation. Vinod and Oishi are
        supported under Grant Number CMMI-1254990 (CAREER,
        Oishi), CNS-1329878, and IIS-1528047. Any opinions,
        findings, and conclusions or recommendations
        expressed in this material are those of the authors
        and do not necessarily reflect the views of the
        National Science Foundation.\newline
        This work was completed while Abraham P.  Vinod was
        a doctoral student at the University of New
        Mexico.\newline
        Figure~\ref{fig:cartoon_stochr} is licensed by the
        authors under the Creative Commons
        Attribution-ShareAlike 4.0 International License. To
        view a copy of this license, visit
    \url{http://creativecommons.org/licenses/by-sa/4.0/}.}
}
\date{}
\begin{document}

\maketitle

\begin{abstract}
Probabilistic guarantees of safety and performance are important in constrained dynamical systems with stochastic uncertainty. We consider the stochastic reachability problem, which maximizes the probability that the state remains within time-varying state constraints (i.e., a ``target tube''), despite bounded control authority. This problem subsumes the stochastic viability and terminal hitting-time stochastic reach-avoid problems. Of special interest is the stochastic reach set, the set of all initial states from which it is possible to stay in the target tube with a probability above a desired threshold. We provide sufficient conditions under which the stochastic reach set is closed, compact, and convex, and provide an underapproximative interpolation technique for stochastic reach sets. Utilizing convex optimization, we propose a scalable and grid-free algorithm that computes a polytopic underapproximation of the stochastic reach set and synthesizes an open-loop controller. This algorithm is anytime, i.e., it produces a valid output even on early termination. We demonstrate the efficacy and scalability of our approach on several numerical examples, and show that our algorithm outperforms existing software tools for verification of linear systems.
\end{abstract}

\keywords{Stochastic reachability \and chance constrained
    optimization \and convex optimization \and stochastic
optimal control}

\section{Introduction}
\label{sec:introduction}

Guarantees of safety and performance are crucial when hard constraints must be 
maintained in stochastic dynamical systems, and have application to problems in 
robotics, biomedical applications, and spacecraft 
\cite{SummersAutomatica2010,AbateAutomatica2008,VinodHSCC2018,VinodLCSS2017,MaloneHSCC2014,LesserCDC2013,GleasonCDC2017,soudjani2014probabilistic}.
Indeed, in safety-critical or expensive applications, it is often useful to pose the question: 
\emph{What are the initial set of states that can remain within a desired, time-varying set with at least a desired likelihood, despite bounded control authority?}. 
Stochastic reachability provides a well established mathematical framework \cite{AbateAutomatica2008,SummersAutomatica2010}
to address such questions. %
{\em In this paper, we characterize the set-theoretic properties of stochastic reach sets, and use these properties to construct tractable computational approaches to compute this set and its corresponding control.}
We focus in particular on %
time-varying sets, i.e., stochastic reachability over a target tube, which
subsumes stochastic viability and terminal hitting-time
stochastic reach-avoid problems~\cite{AbateAutomatica2008,
SummersAutomatica2010}.

The theoretical framework for stochastic reachability is based on
dynamic programming \cite{AbateAutomatica2008,
SummersAutomatica2010}, in which
stochastic reach sets are represented as superlevel sets of the optimal value function.
The computational intractability associated with dynamic programming has spurred 
a variety of approaches to compute stochastic reach sets as
well as the stochastic reach probability, including 
approximate dynamic programming \cite{KariotoglouSCL2016, ManganiniCYB2015},
abstraction-based
techniques~\cite{soudjani2014probabilistic,cauchi2019stochy,soudjani2015faust,cauchi2019efficiency},
Gaussian mixtures \cite{KariotoglouSCL2016}, particle filters
\cite{ManganiniCYB2015, LesserCDC2013,sartipizadeh2019}, convex
chance constrained optimization \cite{LesserCDC2013,VinodACC2019}, Fourier
transforms~\cite{VinodLCSS2017,VinodHSCC2018}, and semi-definite
programming~\cite{SDP_kariotgolou, kariotoglou2017linear}. 
The problem of robust reachability of a target tube \cite{bertsekas1971minimax, bertsekas_infinite_1972}
is closely related, but restricts the uncertainty
in the dynamics to lie in a bounded set.  Set-theoretic (i.e., Lagrangian) 
approaches have been proposed for this problem using computational
geometry \cite{bertsekas1971minimax,GleasonCDC2017}, however, reliance upon 
vertex-facet enumeration precludes computation on problems with large time horizons, or with target tubes
that have small sets. Lastly, the stochastic model predictive control community has focused 
on a similar problem, synthesizing (sub)optimal controllers in stochastic systems via receding horizon framework
~\cite{chatterjee2011stochastic,
mesbah2016stochastic,farina2016stochastic,OnoCDC2008,VinodACC2019}.  However, these approaches do not directly 
solve for the stochastic reach set, which is the focus of this paper.

In this paper, we exploit set-based properties to enable computational tractability for stochastic, linear time-varying dynamical systems with time-varying constraint sets.  
We employ 
upper semi-continuity and log-concavity 
to propose sufficient conditions for
existence, convexity, and compactness of stochastic reach sets.  While these approaches apply to nonlinear dynamical systems as well, computational tractability requires convexity associated with linear dynamics.  
Related work in well-posedness and existence
of stochastic reach sets has been developed for the stochastic reach-avoid problem, 
using continuity of the stochastic kernel \cite{DingAutomatica2013,
kariotoglou2017linear, VinodLCSS2017, chatterjee2011maximizing,
YangAutomatica2017}.

We use these set-based properties to construct a) underapproximations of stochastic reach sets, that can be computed in a grid-free manner, and b) a grid-free interpolation scheme, that enables memory-efficient scaling of pre-computed stochastic reach sets.  In concert, these computational tools facilitate near run-time applications of stochastic reachability. 
{\em The main contributions of this paper are: 
1) characterization of sufficient conditions under which stochastic reachability of a target tube is well-defined, and the stochastic reach sets are closed, compact, and convex, 
2) an underapproximative interpolation technique for stochastic reach sets, and 
3) design of a scalable, grid-free, anytime algorithm that employs convex optimization to provide an open-loop controller-based underapproximation, and is assured to provide a valid solution, even if terminated early. }

This paper builds upon our preliminary work \cite{VinodHSCC2018}, which focused on linear time-invariant systems and time-invariant constraint sets, and employed a computational approach based on Fourier transforms \cite{VinodLCSS2017}.  Here, we relax the reach-avoid constraints,
consider time-varying dynamics that may be nonlinear, and generalize the computational approach, to
include additional methods of chance constraint enforcement
\cite{LesserCDC2013, OnoCDC2008,VinodACC2019}.

The rest of this paper is organized as follows.  Section~\ref{sec:prelim}
describes the stochastic reachability problem and relevant properties from
probability theory and real analysis.  Section~\ref{sec:theory} presents
sufficient conditions to guarantee existence, closedness, compactness, and
convexity of the stochastic reach sets, and proposes an underapproximative
interpolation technique based on the proposed convexity results.
In Section~\ref{sec:OL}, we construct open-loop controllers based on the underapproximation,
and propose algorithms for underapproximative set computation and interpolation.
We demonstrate our algorithms on several numerical examples in
Section~\ref{sec:num}, and conclude in Section \ref{sec:conc}.

\section{Preliminaries and problem formulation}
\label{sec:prelim}

We denote the set of natural numbers by $\mathbb{N}$, and the set of real numbers by $\mathbb{R}$, the Borel $\sigma$-algebra by $ \sigmaAlg(\cdot)$, random vectors with
bold case, non-random vectors with an overline, and a discrete-time time
interval which inclusively enumerates all natural numbers in between $a$ and $b$ for
$a,b\in \mathbb{N}$ and $a\leq b$ by $ \mathbb{N}_{[a,b]}$.  The indicator
function of a non-empty set $ \mathcal{E}$ is denoted by
$1_{\mathcal{E}}(\overline{y})$, such that $1_{\mathcal{E}}(\overline{y})=1$ if
$\overline{y}\in \mathcal{E}$ and is $0$ otherwise. For any non-negative $c\in
\mathbb{R}$, the set $c \mathcal{E}=\{c \overline{y}: \overline{y}\in
\mathcal{E}\}$. We denote the affine hull and the convex hull of a set $
\mathcal{E}$ by $\mathrm{affine}(\mathcal{E})$ and $\mathrm{conv}(
\mathcal{E})$, respectively. We denote the Minkowski sum operation using
$\oplus$ --- for any two sets $ \mathcal{E}_1, \mathcal{E}_2$, $ \mathcal{E}_1
\oplus \mathcal{E}_2 = \{ \overline{y}_1+ \overline{y}_2: \overline{y}_1\in
\mathcal{E}_1, \overline{y}_2\in \mathcal{E}_2\}$. We denote the cardinality of
a finite set $ \mathcal{D}$ by $\Npoly$.

\subsection{Real analysis and probability theory}

The relative interior of a set $ \mathcal{E}\subseteq
\mathbb{R}^n$ is defined as
\begin{align}
    \mathrm{relint}( \mathcal{E})&=\{ \overline{x}\in \mathbb{R}^n: \exists r>0,
\mathrm{Ball}( \overline{x},r)\cap \mathrm{affine}( \mathcal{E})\subseteq
\mathcal{E}\}\nonumber %
\end{align}
where $\mathrm{Ball}( \overline{x},r)$ denotes a ball in $ \mathbb{R}^n$
centered at $ \overline{x}$ and of radius $r$ with respect to any Euclidean
norm~\cite[Sec. 2.1.3]{BoydConvex2004}.  The relative interior of a set is
always non-empty. 
The relative boundary is $ \partial
\mathcal{E}=\mathrm{closure}( \mathcal{E})\setminus \mathrm{relint}(
\mathcal{E})$.  
From the Heine-Borel
theorem~\cite[Thm 12.5.7]{TaoAnalysisII}, $ \mathcal{E}$ is compact if and only
if it is closed and bounded.

A function $f: \mathbb{R}^n \rightarrow \mathbb{R}$ is upper semi-continuous (u.s.c.) if its superlevel sets $\{\overline{x}\in \mathbb{R}^n: f(\overline{x}) \geq \alpha\}$ for every $\alpha\in \mathbb{R}$ are closed~\cite[Defs. 2.3 and 2.8]{RudinReal1987}. 
A non-negative function $f: \mathbb{R}^n \rightarrow [0,\infty)$ is log-concave if $\log f$ is concave with $\log 0\triangleq-\infty$~\cite[Sec.  3.5.1]{BoydConvex2004}. 
Since log-concave functions are quasiconcave~\cite[Sec.  3.5.1]{BoydConvex2004}, their superlevel sets are convex.
Many standard distributions are log-concave, for example, Gaussian, uniform, and exponential~\cite[Eg. 3.40]{BoydConvex2004}. 
The indicator function of a closed and convex set is \usc{} and log-concave
respectively~\cite[Eg. 3.1 and Sec.  3.1.7]{BoydConvex2004}.

\subsection{System description}

\begin{figure*}
    \centering
    \includegraphics[width=1\linewidth]{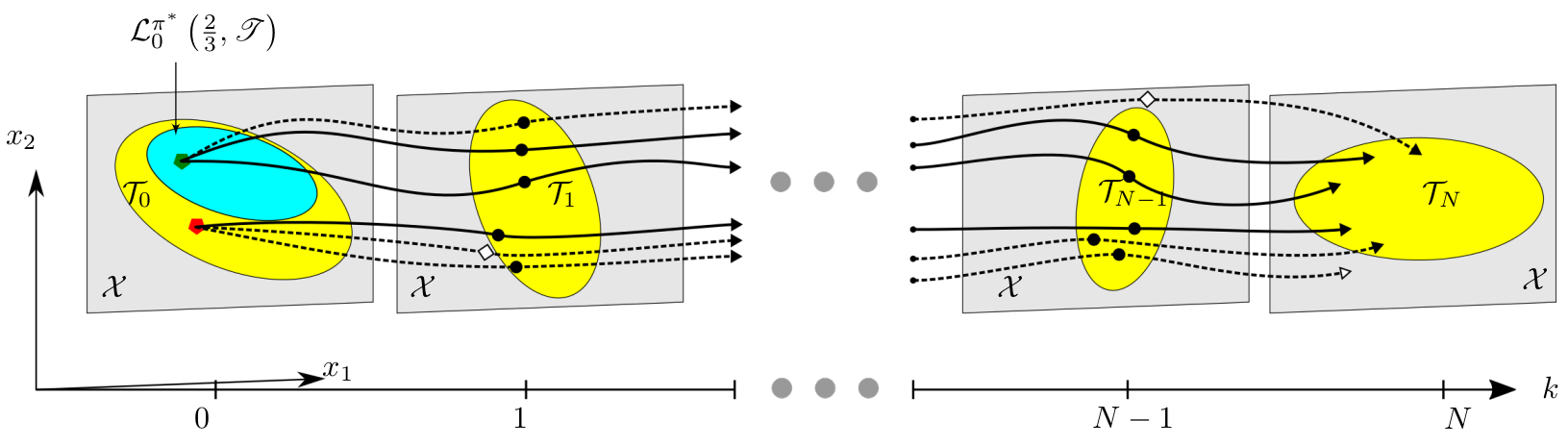}
    \caption{The target tube $\targettube={\{ \mathcal{T}_k\}}_{k=0}^N$, the
        stochastic evolution of \eqref{eq:lin} or \eqref{eq:nonlin} under a maximal reach policy $\pi^\ast$, and the stochastic reach set $\SRAset$ \eqref{eq:stochRAset} for $\alpha=\frac{2}{3}$. 
    Problem \eqref{prob:MP} subsumes the terminal hitting-time reach-avoid
problem~\cite{SummersAutomatica2010} ($\forall k\in
\mathbb{N}_{[0,N-1]},\mathcal{T}_k= \mathcal{S}, \mathcal{T}_N = \mathcal{R}$)
and the viability problem~\cite{AbateHSCC2007,AbateAutomatica2008} ($\forall
k\in \mathbb{N}_{[0,N]},\mathcal{T}_k= \mathcal{S}$) for Borel safe and terminal
sets, $\mathcal{S}$ and $\mathcal{R}$.}
    \label{fig:cartoon_stochr}
\end{figure*}

Consider the discrete-time linear time-varying system,
\begin{align}
    \bx_{k+1}&=A_k\bx_k +B_k \overline{u}_k + \bw_k\label{eq:lin}
\end{align}
with state $\bx_k\in \mathcal{X}= \mathbb{R}^n$, input $ \overline{u}_k \in
\mathcal{U}\subseteq \mathbb{R}^m$, disturbance $\bw_k\in \mathcal{W}\subseteq
\mathbb{R}^n$, a time horizon of interest $N\in \mathbb{N}, N>0$, time-varying
state and input matrices $A_k\in \mathbb{R}^{n\times n}$ and $B_k\in
\mathbb{R}^{n\times m}$ are defined for $k\in \mathbb{N}_{[0,N-1]}$, and an
initial state $ \overline{x}_0 \in \mathcal{X}$.  We assume the input space $
\mathcal{U}$ is compact.  

We assume that the disturbance process ${\{\bw_k\}}_{k=0}^{N-1}$ in \eqref{eq:lin} is
an independent, time-varying random process. We associate with the random vector
$\bw_k$ a probability space $(\mathcal{W}, \sigmaAlg( \mathcal{W}),
\Prob_{\bw,k})$. Here, $\sigmaAlg( \mathcal{W})$ is the collection of Borel
measurable sets of $ \mathcal{W}$, and $\Prob_{\bw,k}$ is the probability measure
associated with $\bw_k$. The concatenated disturbance random vector
$\bW={[\bw_0^\top\ \bw_1^\top\ \cdots\ \bw_{N-1}^\top]}^\top$ is defined in the
probability space $(\mathcal{W}^N, \sigmaAlg( \mathcal{W}^N), \Prob_{\bW})$ with
$\Prob_{\bW}=\prod_{k=0}^{N-1} \Prob_{\bw,k}$. 

The system \eqref{eq:lin} can be equivalently described by a controlled
Markov process with a stochastic kernel that is a time-varying Borel-measurable
function $Q_k: \sigmaAlg( \mathcal{X}) \times \mathcal{X} \times \mathcal{U} \rightarrow [0,1]$.
The stochastic kernel assigns a probability measure on the Borel space
$(\mathcal{X}, \sigmaAlg( \mathcal{X}))$ for the next state $\bx_{k+1}$, parameterized by the
current state $\bx_k$ and current action $\overline{u}_k$. In other words, for
any $ \mathcal{G}\in \sigmaAlg( \mathcal{X})$, $\overline{x} \in \mathcal{X}$,
and $\overline{u}\in \mathcal{U}$, $\int_{\mathcal{G}} Q_k(d\overline{y}\vert
\overline{x}, \overline{u})=\Prob_{\bx}\left\{\bx_{k+1}\in
\mathcal{G}\middle\vert \bx_k=\overline{x}, \overline{u}_k=
\overline{u}\right\}$.
For $\bw_k$ described by a probability density function $\psibwk$,
\begin{align}
    Q_k(d\overline{y}\vert \overline{x},\overline{u})&=
                            \psibwk(\overline{y}-A_k\overline{x}-B_k\overline{u})d\overline{y}.\label{eq:Q_defn_special}
\end{align}

We define a \emph{Markov policy} $\pi=(\mu_0,\mu_1,\ldots,\mu_{N-1})\in
\mathcal{M}$ as a sequence of Borel-measurable state-feedback laws $\mu_k:
\mathcal{X} \rightarrow \mathcal{U}$~\cite[Defn. 2]{AbateAutomatica2008}.  
For a fixed policy $\pi$, the
random vector $\bX= [\bx^\top_1\ \bx^\top_2\ \cdots\ \bx^\top_N]^\top$, defined
in $( \mathcal{X}^N, \sigmaAlg( \mathcal{X}^N),
\Prob_{\bX}^{\overline{x}_0,\pi})$, has a probability measure
$\Prob_{\bX}^{\overline{x}_0,\pi}$ defined using $Q_k$~\cite[Prop.
7.45]{BertsekasSOC1978}. 

We also consider a discrete-time nonlinear time-varying system,
\begin{align}
    \bx_{k+1}&=f_k(\bx_k, \overline{u}_k,\bw_k)\label{eq:nonlin}
\end{align}
with state $\bx_k\in \mathcal{X}\subseteq \mathbb{R}^n$, input $ \overline{u}_k
\in \mathcal{U}\subseteq \mathbb{R}^m$, disturbance $\bw_k\in
\mathcal{W}\subseteq \mathbb{R}^p$, and a time-varying nonlinear function $f_k:
\mathcal{X}\times \mathcal{U}\times \mathcal{W} \rightarrow \mathcal{X}$ defined
for $k\in \mathbb{N}_{[0,N-1]}$. We require $f_k$ to be
Borel-measurable for the state $\bx_k$ to be a well-defined random
process~\cite[Sec. 1.4, Thm.  4]{ChowProbability1997}. The system
\eqref{eq:nonlin} can also be equivalently described by a
controlled Markov process with a stochastic kernel $Q_k$,
\begin{align}
    \int_{\mathcal{G}} Q_k(d\overline{y}\vert \overline{x}, \overline{u})&=\Prob_{\bw,k}\left\{f_k(\overline{x}, \overline{u}, \bw_k)\in
               \mathcal{G}\right\},\label{eq:Q_defn_gen}
\end{align}
for any $ \mathcal{G}\in \sigmaAlg( \mathcal{X})$,
$\overline{x} \in \mathcal{X}$, and $ \overline{u}\in
\mathcal{U}$.
We do not assume $\bw_k$ has a probability density function,
in contrast to \eqref{eq:Q_defn_special}.
See~\cite[Ch. 8]{BertsekasSOC1978} for more details.

\subsection{Stochastic reachability of a target tube}
\label{sub:SRA}

We define the target tube as $ \targettube={\{\mathcal{T}_k\}}_{k=0}^N$ such
that $\mathcal{T}_k \subseteq \mathcal{X}$ are Borel, i.e., $ \mathcal{T}_k\in
\sigmaAlg( \mathcal{X})$. These sets are pre-determined subsets of
$ \mathcal{X}$ that are deemed safe at each time instant within the time horizon.  Define the
\emph{reach probability of a target tube}, $\rapi$, for known $\overline{x}_0$
and $\pi$, as the probability that the evolution of \eqref{eq:lin} or
\eqref{eq:nonlin} under policy $\pi$ lies within the target tube $\targettube$
for the entire time horizon.  Similarly
to~\cite{SummersAutomatica2010,AbateAutomatica2008},
\begin{align}
    \rapi&=
    \Prob_{\bX}^{\overline{x}_0,\pi}\left\{\forall k\in \mathbb{N}_{[0,N]},\ \bx_k\in
    \mathcal{T}_k\right\}. \label{eq:ProbHatR}
\end{align}
As in~\cite[Def. 10]{SummersAutomatica2010}, we define a \emph{maximal
reach policy} as the Markov policy $\pi^\ast$, the optimal solution of
\eqref{prob:MP},
\begin{align}
    \rapiopt=\sup\limits_{\pi\in \mathcal{M}}\rapi.\label{prob:MP}
\end{align} 
Problem \eqref{prob:MP} defines the problem of stochastic reachability of a
target tube, which subsumes the stochastic viability and stochastic reach-avoid
problems~\cite{AbateAutomatica2008, SummersAutomatica2010,
AbateHSCC2007,VinodHSCC2018}. 
The \emph{$\alpha$-level stochastic reach set},
\begin{align}
    \SRAset&= \{\overline{x}_0\in \mathcal{X}: \rapiopt\geq \alpha \},\label{eq:stochRAset}
\end{align}
is the set of initial states which admits a Markov policy that
satisfies the objective of staying within the target tube with a
probability of at least $\alpha$ (Figure~\ref{fig:cartoon_stochr}).

The solution of \eqref{prob:MP} may be characterized via
dynamic programming, a straightforward extension of stochastic reachability~\cite[Thm. 11]{SummersAutomatica2010} and viability~\cite[Thm. 2]{AbateAutomatica2008}.  
Define $V_k^\ast: \mathcal{X} \rightarrow
[0,1],\ k\in \mathbb{N}_{[0,N]}$, by the backward recursion for $\overline{x}\in
\mathcal{X}$,
\begin{subequations}
\begin{align}
    V_N^\ast(\overline{x})&=1_{ \mathcal{T}_N}(\overline{x})\label{eq:VN} \\
    V_k^\ast(\overline{x})&=\sup\limits_{\overline{u}\in \mathcal{U}}1_{
\mathcal{T}_k}(\overline{x})\int_{ \mathcal{X}}
V_{k+1}^\ast(\overline{y})Q_k(d\overline{y}\vert\overline{x},\overline{u}).\label{eq:Vt_recursionQ} 
\end{align}\label{eq:DP_probA}%
\end{subequations}%
Then, the optimal value to \eqref{prob:MP} is $\rapiopt=V_0^\ast(\overline{x}_0)$ 
which assigns to each initial
state $ \overline{x}_0\in \mathcal{X}$ the maximal reach
probability. The maps $V_k^\ast(\cdot)$ are not probability
density functions, since they do not integrate to $1$ over $
\mathcal{X}$.  From \eqref{eq:DP_probA}, we have the
following bounds on $V_k^\ast(\cdot)$,
\begin{align}
    0\leq V_k^\ast( \overline{x})\leq 1_{\mathcal{T}_k}(
    \overline{x}),\quad\forall \overline{x}\in \mathcal{X},\
    \forall k\in \mathbb{N}_{[0,N]}.\label{eq:boundedV}
\end{align}
For $\alpha\in[0,1]$, we define the superlevel sets of $V_k^\ast(\cdot)$ as
\begin{align}
    \SRAsetk&= \{\overline{x}\in \mathcal{X}: V_k^\ast( \overline{x})\geq \alpha
    \}\label{eq:stochRAsetk},
\end{align}
where the $\alpha$-level superlevel set of $V_0^\ast(\cdot)$ coincides with the
$\alpha$-level stochastic reach set \eqref{eq:stochRAset}.

\begin{figure}
    \centering
    \includegraphics[width=0.6\linewidth]{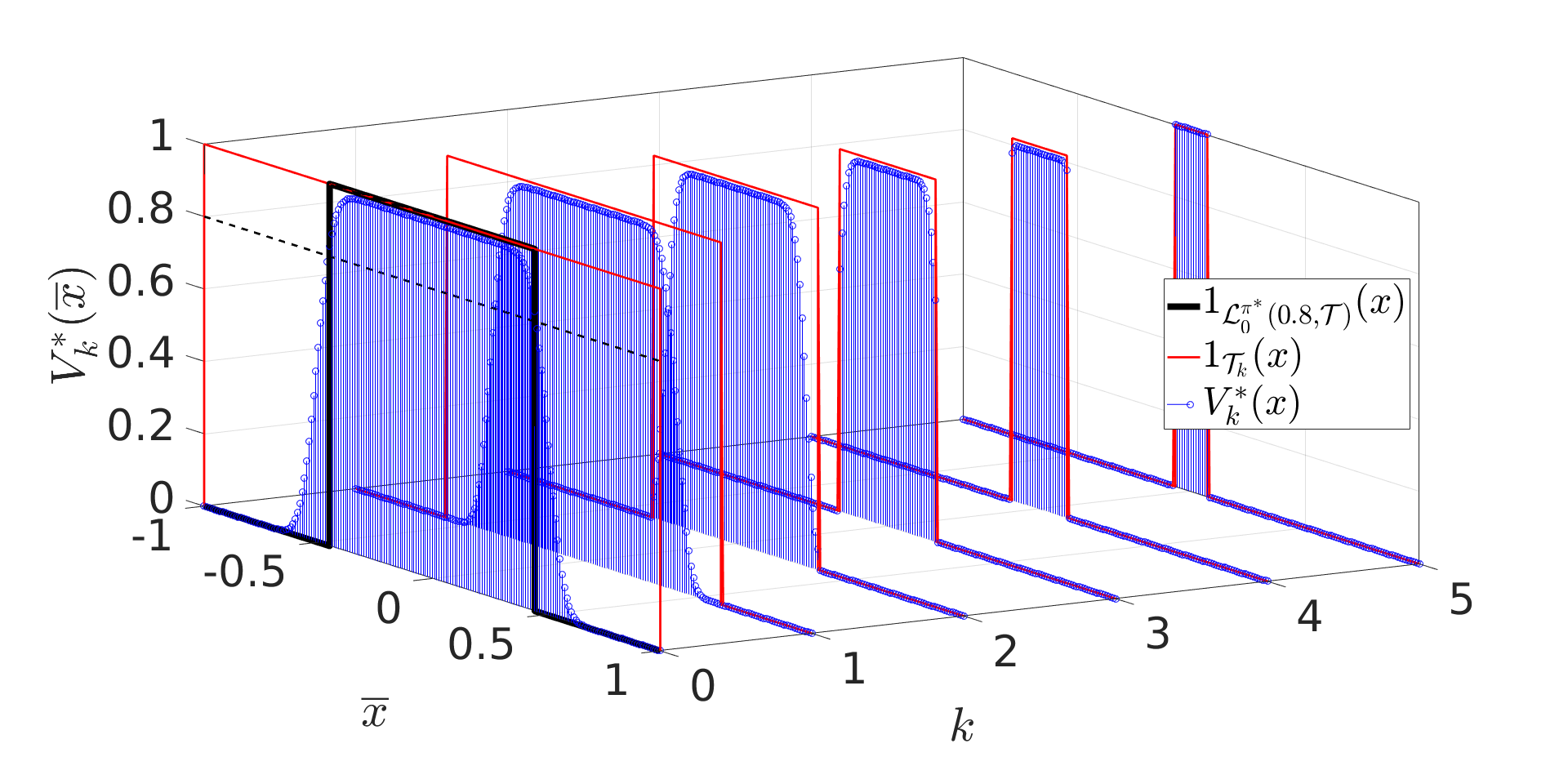}
    \caption{Dynamic programming \eqref{eq:DP_probA} applied
    to example \eqref{eq:ex_sys}. We show the evolution of
$V_k^\ast(x)$ and the construction of the stochastic reach
set $\SRAset$ for $\alpha=0.8$.}
    \label{fig:DP_cartoon}
\end{figure}

\emph{Illustrative example}: Consider the one-dimensional system,
\begin{align}
    \bx_{k+1}&=\bx_k+ u_k +\bw_k\label{eq:ex_sys}
\end{align}
with input $u_k\in[-0.1,0.1]$, and disturbance
$\bw_k\sim \mathcal{N}(0, 0.001)$. Figure~\ref{fig:DP_cartoon} shows the dynamic
programming solution \eqref{eq:DP_probA} for the target tube
$\targettube={\{[-\ell^k, \ell^k]\}}_{k=0}^N$ with
$\ell=0.6$ and time horizon $N=5$. As prescribed by
\eqref{eq:VN}, we set $V_5^\ast(x)=1_{ \mathcal{T}_5}(x)$,
and we compute
$V_k^\ast(\cdot)$ using the backward recursion \eqref{eq:Vt_recursionQ} over a
grid of $\{-1,-0.99,\ldots,0.99,1\}$.  The $0.8$-level stochastic reach set is
given by the superlevel set of $V_0^\ast(\cdot)$ at $0.8$.
As expected, the states near the boundary of the target sets
become increasingly unsafe as we go backward in time.

\subsection{Problem formulation}
\begin{table*}
   \caption{Sufficient conditions for various properties of $V_k^\ast(\cdot)$ and $\SRA(\cdot)$. See~\cite[Thm. 2]{AbateHSCC2007} for Lipschitz continuity of $V_k^\ast(\cdot)$.}
   \label{tab:summary}
    \newcommand{\FunColumnWidth}{1.8cm}
    \newcommand{\SetColumnWidth}{1.5cm}
    \newcommand{\SColumnWidth}{2.5cm}
    \newcommand{\XColumnWidth}{1.1cm}
    \newcommand{\UColumnWidth}{1.5cm}
    \newcommand{\TColumnWidth}{1.5cm}
    \newcommand{\QColumnWidth}{2.8cm}
    \newcommand{\RColumnWidth}{1.5cm}
    \newcommand{\createmycell}[3]{#2}
    \newcommand{\rowadjust}{0.5ex}
    \small
    \adjustbox{width=1\linewidth}{    \begin{tabular}{|m{\FunColumnWidth}|m{\SetColumnWidth}||m{\SColumnWidth}|m{\XColumnWidth}|m{\UColumnWidth}|m{\TColumnWidth}|m{\QColumnWidth} | m{\RColumnWidth}|}
    \hline									
    \multicolumn{2}{|c||}{Property for $k\in \mathbb{N}_{[0,N]}$}&
    \multirow{2}{*}{\begin{minipage}{\SColumnWidth}\centering $f_k$ over $ \mathcal{X}\times \mathcal{U}\times \mathcal{W}$ \end{minipage}} & \multirow{2}{*}{\begin{minipage}{\XColumnWidth}\centering $ \mathcal{X}$ (Borel) \end{minipage}} & \multirow{2}{*}{\begin{minipage}{\UColumnWidth}\centering $ \mathcal{U}$\\ (Compact) \end{minipage}} & \multirow{2}{*}{\begin{minipage}{\TColumnWidth}\centering $ \mathcal{T}_k$ (Borel) \end{minipage}} & \multirow{2}{*}{\begin{minipage}{\QColumnWidth}\centering $ Q_k$ (Borel-measurable) \end{minipage}}  & \multirow{2}{*}{\begin{minipage}{\RColumnWidth}\centering Result \end{minipage}}\\[\rowadjust]\cline{1-2}
    $V_k^\ast(\cdot)$ over $ \mathcal{X}$ & $\SRAsetk$ & & & & & &\\[\rowadjust]\hline\hline
    \createmycell{\FunColumnWidth}{Measurability}{1}  &
    \createmycell{\SetColumnWidth}{Exists $\forall\alpha\in[0,1]$}{1} &
    \createmycell{\SColumnWidth}{Measurable}{2} & -  & - & - &
    \createmycell{\QColumnWidth}{Input-continuous}{1} &
    \createmycell{\RColumnWidth}{Prop.~\ref{prop:Borel}\ref{prop:Borel_exist}}{1} \\[\rowadjust]\cline{1-1}\cline{7-8}
    \createmycell{\FunColumnWidth}{Piecewise continuity}{1} &  & & & & & \createmycell{\QColumnWidth}{Continuous}{1} & \createmycell{\RColumnWidth}{Prop.~\ref{prop:Borel}\ref{prop:Borel_pwcts}}{1} \\[\rowadjust]\hline
    \createmycell{\FunColumnWidth}{Upper semi-continuity}{3}
    & \createmycell{\SetColumnWidth}{Closed
$\forall\alpha\in[0,1]$}{1}  &
\createmycell{\SColumnWidth}{Continuous}{3} &
\createmycell{\XColumnWidth}{Closed}{3} & - &
\createmycell{\TColumnWidth}{Closed}{1} & - & \createmycell{\RColumnWidth}{Prop.~\ref{prop:usc}\ref{prop:usc_closed}}{1} \\[\rowadjust]\cline{2-2}\cline{6-6}\cline{8-8}
     & \createmycell{\SetColumnWidth}{Compact $\forall\alpha\in(0,1]$}{1}  &  &  &  & \createmycell{\TColumnWidth}{Compact}{1} & & \createmycell{\RColumnWidth}{Prop.~\ref{prop:usc}\ref{prop:usc_compact}}{1} \\[\rowadjust]\hline
    \createmycell{\FunColumnWidth}{Log-concavity}{2} &
    \createmycell{\SetColumnWidth}{Convex $\forall\alpha\in[0,1]$}{2}  &
    \createmycell{\SColumnWidth}{Linear \eqref{eq:lin}}{2} &
    \createmycell{\XColumnWidth}{$\mathbb{R}^n$}{2} &
    \createmycell{\UColumnWidth}{Convex}{2} &
    \createmycell{\TColumnWidth}{Convex}{1} &
    \createmycell{\QColumnWidth}{Input-continuous \& Log-concave $\psibwk$}{1}  & \createmycell{\RColumnWidth}{Thm.~\ref{thm:convex}}{2} \\[\rowadjust]\cline{6-7}
                                                     & & & &
                                                     &\createmycell{\TColumnWidth}{Convex
\&\newline closed}{1} & Log-concave  $\psibwk$& \\[\rowadjust]\cline{2-8}
    \createmycell{\FunColumnWidth}{}{} &
    \createmycell{\SetColumnWidth}{Convex \& compact $\forall\alpha\in(0,1]$}{2}  &
    \createmycell{\SColumnWidth}{Linear \eqref{eq:lin}}{2} &
    \createmycell{\XColumnWidth}{$\mathbb{R}^n$}{2} &
    \createmycell{\UColumnWidth}{Convex}{2} &
    \createmycell{\TColumnWidth}{Convex
\&\newline compact}{1} & Log-concave  $\psibwk$ & Prop.~\ref{prop:cvx_cmpt} \\[\rowadjust]\hline
   \end{tabular}}
\end{table*}

\begin{figure}
    \centering
    \tikzstyle{line} = [draw, -latex', thick]
    \begin{tikzpicture}
        \coordinate (model1center) at (0,0);
        \pgfmathsetmacro{\modelOneradius}{3.4}
        \coordinate (model2center) at (2,0);
        \pgfmathsetmacro{\modelTworadius}{3.4}
        \coordinate (model2acenter) at (1.5,-0.4);
        \pgfmathsetmacro{\modelTwoaradius}{1.4}
        \coordinate (model3center) at (1,0);
        \pgfmathsetmacro{\modelThreeradius}{1.5}
        \draw[thick,fill=black!20, name path=Borelcircle] (model1center) circle (\modelOneradius em) node[left, text width=4em, text centered, xshift=-5em] (model1) {\small \emph{Existence}\\(Prop.~\ref{prop:Borel})};
        \draw[thick, fill=blue, fill opacity = 0.5, name path=compactcircle] (model2center) circle (\modelTworadius em) node[fill opacity = 1, right, text width=3.7em, text centered, xshift=4em] (model2) {\small \emph{Existence \& closed}\\(Prop.~\ref{prop:usc}\ref{prop:usc_closed})};
        \draw[thick, fill=red, fill opacity = 0.75, name path=convexcircle] (model3center) circle (\modelThreeradius em) node[fill opacity = 1, above, text width=15em, text centered, yshift=5em] (model3) {\small \emph{Convex} (Thm.~\ref{thm:convex})};
        \draw[thick, fill=green, fill opacity = 0.5] (model2acenter) circle (\modelTwoaradius em) node[fill opacity = 1, below, right, text width=10em, text centered, yshift=-5em,xshift=0.5em] (model2a) {\small \emph{Compactness}\\ (Prop.~\ref{prop:usc}\ref{prop:usc_compact})};
        \node[right of=model2a, text width=11.5em, text centered, xshift=-16em]
            (model4) {\small \emph{Convex \& compact} $ \Rightarrow$
                \emph{Tight\\ polytopic representation}\\
            (Assum.~\ref{assum:cvx_cmpt}, Prop.~\ref{prop:cvx_cmpt})};
        \begin{scope}
          \clip (model2acenter) circle (\modelTwoaradius em);
          \clip (model3center) circle (\modelThreeradius em);
          \fill[thick, black!80, fill opacity = 0.5] (model2acenter) circle (\modelTwoaradius em);
        \end{scope}
        \draw[line] (model1) -- (-0.5,0);
        \draw[line] (model2) -- (2.5,0);
        \draw[line, name path=line 2] (model3) -- (1,0.1);
        \draw[white, line, name intersections={of=line 2 and Borelcircle}] (intersection-1) -- (1,0.1);
        \draw[line] (model2a) -- ([xshift=0.25em,yshift=-0.25em]model2acenter);
        \node [left of=model2acenter,xshift=2.5em,yshift=0.5em] (modelcvxcmpt) {};
        \draw[line, name path=line 1] (model4.north) -- (modelcvxcmpt);
        \draw[white, line, name intersections={of=line 1 and convexcircle}] (intersection-1) -- (modelcvxcmpt);
\end{tikzpicture}
\caption{Various assumptions introduced in Section~\ref{sec:theory}, and the resulting set properties (italicized) of $\SRAsetk$.}
\label{fig:assums_venn}
\end{figure}

We propose grid-free and tractable algorithms to underapproximate $\SRAset$
based on its set-theoretic properties, in lieu of a grid-based implementation of
\eqref{eq:DP_probA}.

\begin{prob}\label{prob:suff_Vt}
    Provide sufficient conditions under which
    $V_k^\ast(\cdot)$ is log-concave and the
    $\alpha$-superlevel set of $V_k^\ast(\cdot)$,
    $\SRAsetk$, is convex for every $k\in
    \mathbb{N}_{[0,N]}$ and $\alpha\in[0,1]$.
\end{prob}
We seek conditions under which the stochastic reachability problem is well-posed, and the stochastic reach set $\SRAsetk$ is closed, bounded, and compact.
Convexity and compactness together enable tight polytopic representations of $\SRAsetk$.
We also seek to exploit these set properties to obtain an underapproximative,
grid-free interpolation scheme for real-time computation.
\begin{prob}~\label{prob:interp}
    For any $\beta\in[\alpha_1,\alpha_2]$ such that
    $0<\alpha_1 < \alpha_2\leq 1$, construct an underapproximative
    interpolation of $\SRAsetkbeta$  using $\SRAsetkOne$ and $\SRAsetkTwo$ for
    any $k\in \mathbb{N}_{[0,N]}$.
\end{prob}

\begin{prob}
    Construct a scalable, grid-free, and anytime algorithm to compute an open-loop
    controller-based polytopic underapproximation to $\SRAsetk,\ \forall k\in
    \mathbb{N}_{[0,N]},\ \forall\alpha\in(0,1]$.\label{prob:compute_poly}
\end{prob}

\section{Stochastic reachability: Properties of \eqref{prob:MP} \& \eqref{eq:stochRAset}}
\label{sec:theory}

The relationship between various assumptions introduced in
this section is shown in Figure~\ref{fig:assums_venn}, and
the set properties are summarized in
Table~\ref{tab:summary}.

\subsection{Existence, continuity, and compactness}
\label{sub:Borel}

We first adapt results from stochastic optimal control to obtain sufficient
conditions under which the stochastic reach set exists, and is closed, bounded,
and compact. For the nonlinear stochastic system \eqref{eq:nonlin} (and thereby
the linear system \eqref{eq:lin}), these
sufficient conditions  guarantee the well-posedness of the stochastic
reachability problem of a target tube \eqref{prob:MP}. The proofs of
Propositions~\ref{prop:contains},~\ref{prop:Borel}, and~\ref{prop:usc} follow
from \eqref{eq:boundedV}, the \emph{measurable selection theorem}~\cite[Thm.
2]{HimmelbergMOR1976}~\cite[Sec.  8.3]{BertsekasSOC1978}, and properties of
\usc{} functions, respectively (see
Appendix~\ref{app:proof_contains}--\ref{app:proof_usc} for the proofs).

\begin{prop}[Bounded]\label{prop:contains}
    Let $\SRAsetk$ exist for some $k\in \mathbb{N}_{[0,N]}$ and $\alpha\in(0,1]$.
    Then, $\SRAsetk\subseteq \mathcal{T}_k$, and bounded $ \mathcal{T}_k$
    implies bounded $ \SRAsetk$.
\end{prop}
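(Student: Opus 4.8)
The plan is to derive the containment $\SRAsetk \subseteq \mathcal{T}_k$ directly from the pointwise sandwich bound \eqref{eq:boundedV}, and then to read off boundedness of $\SRAsetk$ as an immediate corollary of that containment. The only nontrivial ingredient is \eqref{eq:boundedV} itself, which was already extracted from the dynamic programming recursion \eqref{eq:DP_probA}, so the argument is essentially a one-line deduction dressed up with the definition of a superlevel set.

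First I would fix an arbitrary $\overline{x} \in \SRAsetk$. By the definition \eqref{eq:stochRAsetk} of the superlevel set, this means $V_k^\ast(\overline{x}) \geq \alpha$, and since the hypothesis restricts to $\alpha \in (0,1]$ we in fact have $V_k^\ast(\overline{x}) \geq \alpha > 0$. Next I would invoke the upper bound in \eqref{eq:boundedV}, namely $V_k^\ast(\overline{x}) \leq 1_{\mathcal{T}_k}(\overline{x})$, to conclude that $1_{\mathcal{T}_k}(\overline{x}) \geq \alpha > 0$. Because an indicator function is valued in $\{0,1\}$, strict positivity forces $1_{\mathcal{T}_k}(\overline{x}) = 1$, i.e., $\overline{x} \in \mathcal{T}_k$. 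As $\overline{x}$ was arbitrary, this establishes $\SRAsetk \subseteq \mathcal{T}_k$. The boundedness claim then follows with no further work: a subset of a bounded set is bounded, so if $\mathcal{T}_k$ is bounded, then so is $\SRAsetk$.

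I expect no genuine obstacle here; the proposition is elementary once \eqref{eq:boundedV} is in hand. If anything is the ``hard part,'' it is only noticing the role of the restriction $\alpha > 0$: this is precisely what licenses the implication $1_{\mathcal{T}_k}(\overline{x}) > 0 \Rightarrow \overline{x} \in \mathcal{T}_k$. For $\alpha = 0$ the superlevel set would be all of $\mathcal{X}$, since every state trivially satisfies $V_k^\ast(\overline{x}) \geq 0$, and then neither the containment nor the boundedness conclusion would survive. This is exactly why the statement excludes $\alpha = 0$, and I would flag that point explicitly to make the proof airtight.
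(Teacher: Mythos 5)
Your proof is correct and follows exactly the paper's own argument: apply the bound \eqref{eq:boundedV} to get $1_{\mathcal{T}_k}(\overline{x})\geq V_k^\ast(\overline{x})\geq\alpha>0$, conclude $\overline{x}\in\mathcal{T}_k$, and note that a subset of a bounded set is bounded. Your explicit remark on why $\alpha>0$ is essential is a nice touch but does not change the approach.
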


\begin{defn}[Continuity of stochastic kernels]~\cite[Defn.
    7.12]{BertsekasSOC1978}\label{defn:cts_stoch}
    A stochastic kernel $Q(\cdot\vert \overline{x},\overline{u})$ is said to be:
    {\renewcommand{\theenumi}{\alph{enumi}}
    \begin{enumerate}
        \item \emph{input-continuous}, if $\int_{ \mathcal{X}} h( \overline{y})
            Q(d\overline{y} \vert \overline{x},\overline{u})$ is continuous
            over $\mathcal{U}$ for each $ \overline{x}\in \mathcal{X}$ for any
            bounded and Borel-measurable function $h: \mathcal{X}\rightarrow \mathbb{R}$.\label{defn:cts_stoch_input}
        \item \emph{continuous}, if $\int_{ \mathcal{X}} h( \overline{y}) Q(d\overline{y} \vert \overline{x},\overline{u})$ is continuous over $ \mathcal{X}\times\mathcal{U}$ for any
            bounded and Borel-measurable function $h: \mathcal{X}\rightarrow \mathbb{R}$.\label{defn:cts_stoch_all}
    \end{enumerate}}%
\end{defn}
From Definition~\ref{defn:cts_stoch}, every continuous
stochastic kernel is also input-continuous.

\begin{prop}[Existence]\label{prop:Borel} Consider
    \eqref{prob:MP} where $f_k(\cdot),\ \forall k\in
    \mathbb{N}_{[0,N-1]}$ is Borel-measurable over $
    \mathcal{X}\times \mathcal{U}\times \mathcal{W}$, $\mathcal{U}$ is
    compact, and $\targettube = {\{
    \mathcal{T}_k\}}_{k=0}^N$ such that $ \mathcal{T}_k\subseteq \mathcal{X},\
    \forall k\in \mathbb{N}_{[0,N]}$ are Borel.
    The following statements are true for every $k\in \mathbb{N}_{[0,N]}$ and
    $\alpha\in[0,1]$:
    {\renewcommand{\theenumi}{\alph{enumi}}
    \begin{enumerate}
        \item if $Q_k$ is input-continuous for every $k\in
            \mathbb{N}_{[0,N]}$, then $\pi^\ast$
            and $\SRAsetk$ exist, and $V_k^\ast(\cdot)$ is
            Borel-measurable. \label{prop:Borel_exist}
        \item if $Q_k$ is continuous for every $k\in
            \mathbb{N}_{[0,N]}$, then $V_k^\ast(\cdot)$
            is additionally piecewise-continuous over $
            \mathcal{X}$ in the
            sense that $V_k^\ast(\cdot)$ is continuous over
            the relative interior and the complement of
            $\mathcal{T}_k$. \label{prop:Borel_pwcts}
    \end{enumerate}}
\end{prop}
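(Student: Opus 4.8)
The plan is to prove both parts by backward induction on $k$ over the dynamic programming recursion \eqref{eq:DP_probA}, with the base case $V_N^\ast=1_{\mathcal{T}_N}$ handled directly from the hypothesis that $\mathcal{T}_N$ is Borel. For part (a), I would first record that, by \eqref{eq:boundedV}, every $V_{k+1}^\ast$ produced by the recursion takes values in $[0,1]$ and is therefore a candidate member of the class $\mathcal{H}$ of bounded Borel-measurable functions; thus the only property the induction must propagate is Borel-measurability of $V_{k+1}^\ast$. Assuming this, I would define the integral term $W_k(\overline{x},\overline{u})=\int_{\mathcal{X}} V_{k+1}^\ast(\overline{y})\,Q_k(d\overline{y}\vert\overline{x},\overline{u})$ and establish two facts: since $Q_k$ is a Borel-measurable stochastic kernel and $V_{k+1}^\ast\in\mathcal{H}$, the standard approximation of $V_{k+1}^\ast$ by simple functions shows $W_k$ is jointly Borel-measurable in $(\overline{x},\overline{u})$; and since $Q_k$ is input-continuous (Definition~\ref{defn:cts_stoch}\ref{defn:cts_stoch_input}), the slice $W_k(\overline{x},\cdot)$ is continuous on the compact set $\mathcal{U}$ for each fixed $\overline{x}$, so the supremum in \eqref{eq:Vt_recursionQ} is attained.

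The heart of part (a) is then the measurable selection theorem~\cite[Thm.~2]{HimmelbergMOR1976},~\cite[Sec.~8.3]{BertsekasSOC1978}: with $\mathcal{U}$ compact, $W_k$ Borel-measurable in $\overline{x}$ and continuous in $\overline{u}$, the map $\overline{x}\mapsto\sup_{\overline{u}\in\mathcal{U}}W_k(\overline{x},\overline{u})$ is Borel-measurable and admits a Borel-measurable maximizing selector $\mu_k^\ast:\mathcal{X}\to\mathcal{U}$. Multiplying by the Borel-measurable indicator $1_{\mathcal{T}_k}$ preserves measurability, so $V_k^\ast$ is Borel-measurable, which closes the induction. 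Collecting the selectors yields the maximal reach policy $\pi^\ast=(\mu_0^\ast,\ldots,\mu_{N-1}^\ast)$, and measurability of $V_k^\ast$ makes its superlevel set $\SRAsetk$ in \eqref{eq:stochRAsetk} a well-defined Borel set for every $\alpha\in[0,1]$. I expect the main obstacle to be verifying the hypotheses of the measurable selection theorem cleanly --- in particular the joint measurability of $W_k$ and the attainment of the supremum on compact $\mathcal{U}$ --- rather than the backward induction itself.

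For part (b), I would run the same induction but upgrade the continuity. Since continuity of $Q_k$ (Definition~\ref{defn:cts_stoch}\ref{defn:cts_stoch_all}) implies input-continuity, part (a) applies and already guarantees $V_{k+1}^\ast\in\mathcal{H}$; the stronger hypothesis then gives that $W_k$ is jointly continuous on $\mathcal{X}\times\mathcal{U}$. Because $\mathcal{U}$ is a fixed compact set, Berge's maximum theorem (equivalently, the elementary fact that the supremum of a jointly continuous function over a fixed compact set is continuous) shows $\overline{x}\mapsto\sup_{\overline{u}\in\mathcal{U}}W_k(\overline{x},\overline{u})$ is continuous on all of $\mathcal{X}$. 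The recursion then expresses $V_k^\ast$ as the product of this continuous function with $1_{\mathcal{T}_k}$, which equals $1$ on $\mathrm{int}(\mathcal{T}_k)$ and $0$ on the exterior of $\mathcal{T}_k$; hence $V_k^\ast$ is continuous on $\mathcal{X}\setminus\partial\mathcal{T}_k$, so any discontinuity is confined to $\partial\mathcal{T}_k$, establishing piecewise continuity. I would emphasize that no piecewise-continuity hypothesis on $V_{k+1}^\ast$ needs to be carried through the induction, since the continuity of $W_k$ requires only that $V_{k+1}^\ast$ be bounded and Borel-measurable; the discontinuities enter solely through the indicator factor $1_{\mathcal{T}_k}$ at each stage.
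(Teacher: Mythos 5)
Your proposal is correct and follows essentially the same route as the paper: backward induction with the measurable selection theorem of Himmelberg et al.\ (via input-continuity and compactness of $\mathcal{U}$) for part (a), and joint continuity of the integral term plus the Berge-type fact that a supremum over a fixed compact set preserves continuity for part (b), with the indicator factor confining discontinuities to $\partial\mathcal{T}_k$. Your closing observation---that only boundedness and Borel-measurability of $V_{k+1}^\ast$, not its piecewise continuity, need be propagated through the induction in part (b)---is exactly how the paper's Lemma~\ref{lem:cts_intg} is structured.
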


\begin{prop}[Closed/compact]\label{prop:usc}
    Consider \eqref{prob:MP} where $f_k(\cdot),\ \forall k\in
    \mathbb{N}_{[0,N-1]}$ is continuous over $ \mathcal{X}\times
    \mathcal{U}\times \mathcal{W}$, $ \mathcal{X}$ is closed, $\mathcal{U}$ is
    compact, and $\targettube = {\{ \mathcal{T}_k\}}_{k=0}^N$ such that $
    \mathcal{T}_k\subseteq \mathcal{X},\ \forall k\in
    \mathbb{N}_{[0,N]}$ are closed.  The following statements are true for every
    $k\in \mathbb{N}_{[0,N]}$ and $\alpha\in [0,1]$:
    {\renewcommand{\theenumi}{\alph{enumi}}
    \begin{enumerate}
        \item $\pi^\ast$ and $\SRAsetk$ exist,
            $V_k^\ast(\cdot)$ is \usc{} over $ \mathcal{X}$,
            and $\SRAsetk$  is
            closed.\label{prop:usc_closed}
        \item additionally, if $ \mathcal{T}_k$ is bounded
            for some $k\in \mathbb{N}_{[0,N]}$, then
            $\SRAsetk$ is compact.\label{prop:usc_compact}
    \end{enumerate}}
\end{prop}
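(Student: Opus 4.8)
The plan is to establish upper semi-continuity of each $V_k^\ast(\cdot)$ by a backward induction on $k$, and then read off both claims. Once $V_k^\ast(\cdot)$ is known to be \usc{} over the closed set $\mathcal{X}$, its superlevel set $\SRAsetk=\{\overline{x}\in\mathcal{X}:V_k^\ast(\overline{x})\geq\alpha\}$ is closed directly from the definition of upper semi-continuity, and $V_k^\ast(\cdot)$ is in particular Borel-measurable, so $\SRAsetk$ is well-defined. Existence of the maximal reach policy $\pi^\ast$ then follows by the same measurable selection argument underlying Proposition~\ref{prop:Borel}\ref{prop:Borel_exist}: at each stage the maximand is \usc{} in $\overline{u}$ over the compact set $\mathcal{U}$, so a Borel-measurable maximizer exists. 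Hence the crux of part~\ref{prop:usc_closed} is the upper semi-continuity of $V_k^\ast(\cdot)$ itself. I would argue this directly from continuity of $f_k$, rather than through the kernel-continuity notions of Definition~\ref{defn:cts_stoch}, since continuity of $f_k$ need not make $Q_k$ continuous against the discontinuous members of $\mathcal{H}$.

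For the base case $k=N$, $V_N^\ast=1_{\mathcal{T}_N}$ is \usc{} because $\mathcal{T}_N$ is closed. For the inductive step, suppose $V_{k+1}^\ast(\cdot)$ is \usc{} and, by \eqref{eq:boundedV}, takes values in $[0,1]$. Rewriting the integral in \eqref{eq:Vt_recursionQ} through the disturbance representation \eqref{eq:Q_defn_gen} of the kernel gives
\begin{equation*}
W_k(\overline{x},\overline{u})=\int_{\mathcal{X}}V_{k+1}^\ast(\overline{y})\,Q_k(d\overline{y}\vert\overline{x},\overline{u})=\Exp\big[V_{k+1}^\ast(f_k(\overline{x},\overline{u},\bw_k))\big].
\end{equation*}
Since $f_k$ is continuous and $V_{k+1}^\ast$ is \usc, the composition $V_{k+1}^\ast\circ f_k$ is \usc{} jointly in $(\overline{x},\overline{u},\overline{w})$ (the preimage of a closed superlevel set under a continuous map is closed). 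The key step is to push this through the expectation: because the integrand is uniformly bounded by $1$, the reverse Fatou lemma yields $\limsup_{(\overline{x}_n,\overline{u}_n)\to(\overline{x},\overline{u})}W_k(\overline{x}_n,\overline{u}_n)\leq W_k(\overline{x},\overline{u})$, so $W_k(\cdot)$ is \usc{} on $\mathcal{X}\times\mathcal{U}$.

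It remains to handle the supremum and the indicator. Since $1_{\mathcal{T}_k}(\overline{x})$ does not depend on $\overline{u}$, I would write $V_k^\ast(\overline{x})=1_{\mathcal{T}_k}(\overline{x})\sup_{\overline{u}\in\mathcal{U}}W_k(\overline{x},\overline{u})$. The partial supremum $\overline{x}\mapsto\sup_{\overline{u}\in\mathcal{U}}W_k(\overline{x},\overline{u})$ is \usc: compactness of $\mathcal{U}$ guarantees the supremum is attained (a \usc{} function attains its maximum on a compact set), and a sequential argument that extracts a convergent subsequence of maximizers, together with the joint upper semi-continuity of $W_k$, shows that its superlevel sets are closed. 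Finally, $V_k^\ast$ is the product of the two nonnegative \usc{} functions $1_{\mathcal{T}_k}$ (\usc{} since $\mathcal{T}_k$ is closed) and the partial supremum, and a product of nonnegative \usc{} functions is \usc{} as well. This completes the induction and proves part~\ref{prop:usc_closed}.

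Part~\ref{prop:usc_compact} then follows quickly. For $\alpha\in(0,1]$, Proposition~\ref{prop:contains} gives $\SRAsetk\subseteq\mathcal{T}_k$, so boundedness of $\mathcal{T}_k$ forces $\SRAsetk$ to be bounded; combined with its closedness from part~\ref{prop:usc_closed} and the Heine--Borel theorem, $\SRAsetk$ is compact. I expect the main obstacle to be the preservation of upper semi-continuity under integration in the inductive step: one cannot simply invoke continuity of the kernel because $V_{k+1}^\ast$ is only \usc, so the argument must lean on the uniform bound \eqref{eq:boundedV} to justify the reverse Fatou inequality (equivalently, to approximate $V_{k+1}^\ast$ from above by a decreasing sequence of bounded continuous functions). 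Compactness of $\mathcal{U}$ is likewise essential, both to ensure the supremum is attained and to extract convergent subsequences of maximizers.
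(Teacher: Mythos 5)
Your proposal is correct and follows essentially the same route as the paper: backward induction with the base case $V_N^\ast=1_{\mathcal{T}_N}$, the key step of pushing upper semi-continuity of $V_{k+1}^\ast\circ f_k$ through the integral via the disturbance representation of $Q_k$ and a reverse-Fatou argument using the uniform bound \eqref{eq:boundedV} (this is exactly the paper's Lemma~\ref{lem:usc_intg}), followed by the partial supremum over the compact set $\mathcal{U}$ with a measurable selection for $\pi^\ast$, preservation of upper semi-continuity under products of nonnegative functions, and finally Proposition~\ref{prop:contains} plus Heine--Borel for part~\ref{prop:usc_compact}. The only cosmetic difference is that you argue the upper semi-continuity of the partial supremum by hand via convergent subsequences of maximizers, where the paper cites a semicontinuous selection result from Bertsekas--Shreve that delivers both that fact and the Borel-measurable selector at once.
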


The key difference between assumptions in
Proposition~\ref{prop:Borel} and~\ref{prop:usc} is the
replacement of continuity requirements on $Q_k$, with
stricter requirements on $f_k(\cdot)$, $ \mathcal{X}$, and
$\targettube{}$. These assumptions do not subsume each
other in general, as illustrated in
Figure~\ref{fig:assums_venn}.

\subsection{Convexity and compactness: Polytopic representation of $\SRAsetk$}%
\label{sub:poly_SRAset}

An
underapproximative polytope of a closed and convex set is the convex hull of a
collection of points within the set~\cite[Sec. 2.1.4]{BoydConvex2004}. Using
compactness, this underapproximation can be made tight by
identifying the extreme points in the set \cite[Thm
2.6.16]{webster1994convexity}.
Recall that extreme points of a set are points that cannot
be expressed as a convex combination of any two distinct
points in the set
\cite[Ch. 2]{webster1994convexity}. 
The key benefit of demonstrating convexity of stochastic reach sets is that it
enables a grid-free characterization.

\begin{thm}[Convex]\label{thm:convex}
    Consider \eqref{prob:MP} where the system is linear \eqref{eq:lin}, $
    \mathcal{X} = \mathbb{R}^n$, $\mathcal{U}$ is
    convex and compact, $\targettube = {\{
    \mathcal{T}_k\}}_{k=0}^N$ such that $ \mathcal{T}_k\subseteq \mathcal{X},\ \forall k\in \mathbb{N}_{[0,N]}$
    are convex, and $\psibwk,\ \forall k\in \mathbb{N}_{[0,N]}$ is a log-concave
    probability density function. Further, let $Q_k,\ \forall k\in
    \mathbb{N}_{[0,N-1]}$ be input-continuous, or the set $\mathcal{T}_k,\ \forall k\in
    \mathbb{N}_{[0,N]}$ be closed. Then, $V_k^\ast(\cdot)$ is log-concave over $
    \mathcal{X}$ and $\SRAsetk$ is convex, for
    every $k\in \mathbb{N}_{[0,N]}$ and $\alpha\in[0,1]$.
\end{thm}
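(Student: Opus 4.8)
The plan is to prove log-concavity of $V_k^\ast(\cdot)$ by backward induction on $k$, running the dynamic programming recursion \eqref{eq:DP_probA} from $k=N$ down to $k=0$. The engine of the induction is that log-concavity is preserved under the operations appearing in \eqref{eq:Vt_recursionQ}: pointwise products, composition with affine maps, marginalization of a jointly log-concave function (Prékopa's theorem), and partial maximization over a convex set. Once $V_k^\ast(\cdot)$ is shown to be log-concave, convexity of $\SRAsetk$ follows immediately, since log-concave functions are quasiconcave and hence have convex superlevel sets, as noted in Section~\ref{sec:prelim}; this yields the claim for every $\alpha\in[0,1]$ via the definition \eqref{eq:stochRAsetk}.

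For the base case $k=N$, we have $V_N^\ast(\cdot)=1_{\mathcal{T}_N}(\cdot)$ by \eqref{eq:VN}, which is log-concave because $\mathcal{T}_N$ is convex and the indicator of a convex set is log-concave. For the inductive step, suppose $V_{k+1}^\ast(\cdot)$ is log-concave. Substituting the linear-system kernel \eqref{eq:Q_defn_special} into \eqref{eq:Vt_recursionQ}, the inner integral is
\[
    g_k(\overline{x},\overline{u})=\int_{\mathbb{R}^n} V_{k+1}^\ast(\overline{y})\,\psibwk(\overline{y}-A_k\overline{x}-B_k\overline{u})\,d\overline{y}.
\]
I would first argue that the integrand is jointly log-concave in $(\overline{x},\overline{u},\overline{y})$: the factor $V_{k+1}^\ast(\overline{y})$ is log-concave in $\overline{y}$, hence jointly so (being constant in the remaining variables), while $\psibwk(\overline{y}-A_k\overline{x}-B_k\overline{u})$ is the composition of the log-concave density $\psibwk$ with a linear map and is therefore jointly log-concave; the product of the two is jointly log-concave. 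Applying Prékopa's theorem, marginalizing out $\overline{y}$ leaves $g_k(\cdot,\cdot)$ log-concave in $(\overline{x},\overline{u})$. Since $\mathcal{U}$ is convex, partial maximization $\overline{x}\mapsto\sup_{\overline{u}\in\mathcal{U}}g_k(\overline{x},\overline{u})$ preserves log-concavity (equivalently, $\log g_k$ is jointly concave and partial maximization preserves concavity, using that $\log\sup_{\overline{u}}g_k=\sup_{\overline{u}}\log g_k$). Finally, multiplying by the log-concave factor $1_{\mathcal{T}_k}(\cdot)$---log-concave because $\mathcal{T}_k$ is convex---gives $V_k^\ast(\cdot)$ as a product of log-concave functions, completing the induction.

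Two points require care. First, the reductions must be interpreted with the convention $\log 0\triangleq-\infty$, since $V_{k+1}^\ast$ and the indicators vanish outside the target sets; the bounds $0\le V_{k+1}^\ast\le 1$ from \eqref{eq:boundedV}, together with $\psibwk$ being a density, guarantee $g_k$ is finite, so no integrability issues arise. Second, log-concavity presupposes that $V_k^\ast(\cdot)$ is well-defined as the optimal value and that the supremum in \eqref{eq:Vt_recursionQ} is meaningful; this is exactly what the two alternative hypotheses supply---input-continuity of $Q_k$ invokes Proposition~\ref{prop:Borel}\ref{prop:Borel_exist}, while closedness of $\mathcal{T}_k$ invokes Proposition~\ref{prop:usc}\ref{prop:usc_closed}---so that $\pi^\ast$ and the value functions exist under either branch.

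I expect the Prékopa marginalization step to be the crux: establishing joint log-concavity of the integrand and invoking the theorem correctly is what upgrades log-concavity of $V_{k+1}^\ast$ (a property in $\overline{y}$ alone) to log-concavity of $g_k$ in the control-augmented variable $(\overline{x},\overline{u})$, which is indispensable for the subsequent partial-maximization step. The partial-maximization step is standard but relies essentially on convexity of $\mathcal{U}$, while compactness of $\mathcal{U}$ ensures the supremum is attained and keeps $V_k^\ast(\cdot)$ real-valued and consistent with the existence results.
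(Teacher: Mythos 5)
Your proposal is correct and follows essentially the same route as the paper's proof: backward induction anchored at the log-concave indicator $1_{\mathcal{T}_N}(\cdot)$, joint log-concavity of the integrand via composition with an affine map, preservation of log-concavity under marginalization (Prékopa's theorem, which the paper cites as~\cite[Sec.~3.5.2]{BoydConvex2004}) and under partial supremum over the convex set $\mathcal{U}$, and finally quasiconcavity to conclude convexity of the superlevel sets. The only cosmetic difference is that you integrate over the next state $\overline{y}$ with the shifted density, whereas the paper changes variables to integrate over the disturbance $\overline{w}$; these are equivalent.
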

\begin{proof}
    The hypothesis of Theorem~\ref{thm:convex} ensures that \eqref{prob:MP} is
    well-posed and an optimal Markov policy exists by
    Propositions~\ref{prop:Borel} or~\ref{prop:usc}. 

    Since $ \mathcal{T}_k$ are convex, the indicator functions $ 1_{\mathcal{T}_k}(\cdot),\ \forall k\in
    \mathbb{N}_{[0,N]}$ are log-concave over $ \mathcal{X}$.
    Hence, $V_N^\ast(\cdot)$ is log-concave over $ \mathcal{X}$ by \eqref{eq:VN}. 
    Next, we will show that $\int_{ \mathcal{X}}
    V_{k+1}^\ast(\overline{y})Q_k(d\overline{y}\vert\overline{x},\overline{u})$
    is log-concave over $ \mathcal{X}\times \mathcal{U}$ for every $k\in
    \mathbb{N}_{[0,N-1]}$, from which the log-concavity of $V_k^\ast(\cdot)$
    will follow.
    By \eqref{eq:Q_defn_special}, for every $k\in \mathbb{N}_{[0,N-1]}$,
    \begin{align}
    \int_{ \mathcal{X}}
    V_{k+1}(\overline{y}) Q_{k}(d\overline{y}\vert \overline{x},
    \overline{u})
    &=\int_{ \mathcal{X}} V_{k+1}(A_k\overline{x} + B_k \overline{u} +
    \overline{w})\psibwk( \overline{w})d\overline{w}. 
    \end{align}
    Our proof is by induction.  Consider the base case
    $k=N-1$.  To show log-concavity of $\int_{ \mathcal{X}}
    V_{N}(\overline{y}) Q_{N-1}(d\overline{y}\vert
    \overline{x}, \overline{u})$, first recall that
    compositions of log-concave functions with affine
    functions preserve log-concavity~\cite[Sec.
    3.2.2]{BoydConvex2004}.  Hence, $V_N(A_{N-1}
    \overline{x}_{N-1} + B \overline{u}_{N-1} +
    \overline{w}_{N-1})$ is log-concave over $
    \mathcal{X}\times \mathcal{U} \times \mathcal{W}$. The
    proof for log-concavity of $\int_{ \mathcal{X}}
    V_{N}^\ast(\overline{y})Q_{N-1}(d\overline{y}\vert\overline{x},\overline{u})$
    is complete with the observation that multiplication and
    partial integration preserve log-concavity~\cite[Sec.
    3.5.2]{BoydConvex2004}.  The proof of log-concavity of
    $V_{N-1}(\cdot)$ follows from the convexity of $
    \mathcal{U}$ and the fact that log-concavity is
    preserved under partial supremum over convex sets and
    multiplication~\cite[Secs. 3.2.5 and
    3.5.2]{BoydConvex2004}.

    For the case $k=t$ with $ t\in \mathbb{N}_{[0,N-2]}$, assume for induction
    that $V_{t+1}^\ast(\cdot)$ is log-concave over $ \mathcal{X}$.  
    The proofs for the log-concavity of $\int_{ \mathcal{X}}
    V_{t+1}^\ast(\overline{y})Q_t(d\overline{y}\vert\overline{x},\overline{u})$
    and $V_t^\ast(\cdot)$ follow by the same arguments as above.

    For every $k\in \mathbb{N}_{[0,N]}$, the log-concavity
    (and thereby, quasiconcavity) 
    of $V_k^\ast(\cdot)$ implies that the set $\SRAsetk$ is
    convex for $\alpha\in[0,1]$.
\end{proof}
Recall that the probability density $\psibwk$ is log-concave
if and only if $\bw_k$ has a log-concave probability measure
and full-dimensional support~\cite[Thm.
2.8]{dharmadhikari1988unimodality}. Further,
input-continuity of $Q_k$ in \eqref{eq:Q_defn_special} is
guaranteed if $\psibwk$ is a continuous function~\cite[Sec.
8.3]{BertsekasSOC1978}.

\begin{assum}\label{assum:cvx_cmpt}
The system model is linear \eqref{eq:lin}, $ \mathcal{X} =
\mathbb{R}^n$, $\mathcal{U}$ is convex and compact, and $\targettube = {\{
\mathcal{T}_k\}}_{k=0}^N$ such that $ \mathcal{T}_k\subseteq \mathcal{X}$ are
convex and compact $\forall k\in \mathbb{N}_{[0,N]}$, and $\psibwk$ is a
log-concave probability density function. 
\end{assum}
\begin{prop}[Convex and compact]\label{prop:cvx_cmpt}
    Under Assumption~\ref{assum:cvx_cmpt}, $\SRAsetk$ are
    convex and compact, for every $k\in \mathbb{N}_{[0,N]}$
    and $\alpha\in(0,1]$.
\end{prop}
\begin{proof}
    By Proposition~\ref{prop:usc} and Theorem~\ref{thm:convex}. 
\end{proof}

Assumption~\ref{assum:cvx_cmpt} combines the assumptions in
Theorem~\ref{thm:convex} and Proposition~\ref{prop:usc}\ref{prop:usc_compact}.
Theorem~\ref{thm:convex} and Proposition~\ref{prop:cvx_cmpt} solve
Problem~\ref{prob:suff_Vt}.  Proposition~\ref{prop:cvx_cmpt} admits a tight
polytopic representation of $\SRAsetk$ for every $k\in \mathbb{N}_{[0,N]}$ and
$\alpha\in(0,1]$. We lose the guarantee of boundedness, and
thereby compactness, for $\alpha=0$
(Proposition~\ref{prop:contains}).

\subsection{Underapproximative interpolation}
\label{sub:interp}

We now consider Problem~\ref{prob:interp}. Given the stochastic
reach sets at two different thresholds $\alpha_1,\alpha_2\in(0,1]$, we wish to
construct an underapproximative interpolation of the stochastic reach set at a
desired threshold $\beta\in[\alpha_1,\alpha_2]$.
\begin{thm}[Interpolation via Minkowski sum]\label{thm:interp}
    Let Assumption~\ref{assum:cvx_cmpt} hold, and let $k\in \mathbb{N}_{[0,N]}$
    and $\alpha_1,\alpha_2\in(0,1],\ \alpha_1<\alpha_2$. Then, for any
    $\beta\in[\alpha_1,\alpha_2]$, 
    \begin{align}
        \gamma \SRAsetkOne \oplus (1-\gamma) \SRAsetkTwo \subseteq
        \SRAsetkbeta\label{eq:underapprox_MinkSum},
    \end{align}
    where
    \begin{align}
        \gamma&=\frac{\log(\alpha_2)-\log(\beta)}{\log(\alpha_2)-\log(\alpha_1)}\in[0,1].\label{eq:gamma_defn}
    \end{align}
\end{thm}
\begin{proof}
    For any $k\in \mathbb{N}_{[0,N]}$ and
    $\gamma$ in \eqref{eq:gamma_defn}, consider some $ \overline{y}\in \gamma \SRAsetkOne \oplus (1-\gamma)
    \SRAsetkTwo$. Specifically, there exists $ \overline{y}_1\in\SRAsetkOne$ and
    $ \overline{y}_2\in\SRAsetkTwo$ such that $ \overline{y}=\gamma
    \overline{y}_1 + (1-\gamma) \overline{y}_2$, $V_k^\ast\left(
    \overline{y}_1\right)\geq \alpha_1>0$, and $V_k^\ast\left(
\overline{y}_2\right)\geq \alpha_2>0$. Recall that
$z^\gamma$ is nondecreasing over positive $z\in \mathbb{R}$.
Consequently,
    \begin{subequations}
    \begin{align}
        {\left(V_k^\ast\left( \overline{y}_1\right)\right)}^\gamma&\geq
    \alpha_1^\gamma>0 \\
        {\left(V_k^\ast\left( \overline{y}_2\right)\right)}^{(1-\gamma)}&\geq
\alpha_2^{(1-\gamma)}>0 \\
{\left(V_k^\ast\left( \overline{y}_1\right)\right)}^\gamma{\left(V_k^\ast\left(
\overline{y}_2\right)\right)}^{(1-\gamma)}&\geq
\alpha_1^\gamma\alpha_2^{(1-\gamma)}.
    \end{align}\label{eq:pow_V_interp}%
    \end{subequations}%
    By the definition of $\gamma$ in \eqref{eq:gamma_defn},
    $\alpha_1^\gamma\alpha_2^{(1-\gamma)}=\beta$.  By log-concavity of
    $V_k^\ast(\cdot)$ (Theorem~\ref{thm:convex}) and \eqref{eq:pow_V_interp}, we
    have $V_k^\ast\left( \overline{y}\right)\geq {\left(V_k^\ast\left(
        \overline{y}_1\right)\right)}^\gamma{\left(V_k^\ast\left(
\overline{y}_2\right)\right)}^{(1-\gamma)} \geq \beta$. Thus, $
\overline{y}\in\SRAsetkbeta$, which completes the proof.
\end{proof}
\begin{cor}\label{cor:interp}
    Let Assumption~\ref{assum:cvx_cmpt} hold.
    Given $\alpha_1,\alpha_2\in(0,1]$, let $\alpha_1<\alpha_2$,
    $\overline{x}_1^{(1)},\ldots,\overline{x}_1^{(K_1)}\in\SRAsetkOne$ and
    $\overline{x}_2^{(1)},\ldots,\overline{x}_2^{(K_2)}\in\SRAsetkTwo$ for some
    $K_1,K_2\in \mathbb{N}, K_1,K_2>0$, and $k\in \mathbb{N}_{[0,N]}$. For any
    $\beta\in[\alpha_1,\alpha_2]$ and  $\gamma$ defined in
    \eqref{eq:gamma_defn}, 
    \begin{align}
        \convij{K_1}{K_2}(\gamma \overline{x}_1^{(i)}+ (1-\gamma) \overline{x}_2^{(j)}) \subseteq
        \SRAsetkbeta \nonumber.%
    \end{align}
\end{cor}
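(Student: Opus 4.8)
The plan is to derive this corollary directly from Theorem~\ref{thm:interp} by exploiting convexity of the stochastic reach sets (guaranteed by Theorem~\ref{thm:convex} under Assumption~\ref{assum:cvx_cmpt}) together with the algebraic fact that a convex hull of a convex combination of finite point sets is contained in the corresponding convex combination of their convex hulls. The key observation is that the left-hand side, $\convij{K_1}{K_2}(\gamma \overline{x}_1^{(i)}+ (1-\gamma) \overline{x}_2^{(j)})$, is a polytope whose vertices are the pairwise convex combinations of the given points, and I want to show each such polytope sits inside the Minkowski-sum underapproximation from Theorem~\ref{thm:interp}.

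**First I would** establish the set inclusion
\begin{align}
    \convij{K_1}{K_2}(\gamma \overline{x}_1^{(i)}+ (1-\gamma) \overline{x}_2^{(j)}) \subseteq \gamma\, \mathrm{conv}(\{\overline{x}_1^{(i)}\}) \oplus (1-\gamma)\, \mathrm{conv}(\{\overline{x}_2^{(j)}\}). \nonumber
\end{align}
This follows from the general identity that for finite point sets, the convex hull of all mixed convex combinations is exactly the Minkowski sum of the scaled convex hulls; concretely, any point $\sum_{i,j}\lambda_{ij}(\gamma\overline{x}_1^{(i)}+(1-\gamma)\overline{x}_2^{(j)})$ with $\lambda_{ij}\geq 0$, $\sum_{i,j}\lambda_{ij}=1$, regroups as $\gamma\sum_i(\sum_j\lambda_{ij})\overline{x}_1^{(i)} + (1-\gamma)\sum_j(\sum_i\lambda_{ij})\overline{x}_2^{(j)}$, and the coefficients $\sum_j\lambda_{ij}$ and $\sum_i\lambda_{ij}$ each form valid convex weights summing to one.

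**Next I would** use that each point $\overline{x}_1^{(i)}\in\SRAsetkOne$ and the set $\SRAsetkOne$ is convex (Theorem~\ref{thm:convex}), so $\mathrm{conv}(\{\overline{x}_1^{(i)}\})\subseteq\SRAsetkOne$, and similarly $\mathrm{conv}(\{\overline{x}_2^{(j)}\})\subseteq\SRAsetkTwo$. Combining with the inclusion above yields
\begin{align}
    \convij{K_1}{K_2}(\gamma \overline{x}_1^{(i)}+ (1-\gamma) \overline{x}_2^{(j)}) \subseteq \gamma\SRAsetkOne \oplus (1-\gamma)\SRAsetkTwo. \nonumber
\end{align}
Applying Theorem~\ref{thm:interp}, the right-hand side is contained in $\SRAsetkbeta$, which completes the chain of inclusions and hence the proof.

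**The main obstacle** is likely just the bookkeeping in the regrouping step that establishes the convex-hull/Minkowski-sum identity; it is elementary but requires care with the double-indexed convex weights to verify that the marginal sums genuinely form convex combinations. Everything else is a direct appeal to convexity of the reach sets and to Theorem~\ref{thm:interp}, so no probabilistic or analytic machinery beyond what is already proven is needed.
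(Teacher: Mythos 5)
Your proof is correct, and it follows the route the paper intends for this corollary (which is stated without an explicit proof as a direct consequence of Theorem~\ref{thm:interp} and convexity): the regrouping of the double-indexed weights is valid, the marginal sums do form convex combinations, and the chain of inclusions through $\gamma\SRAsetkOne\oplus(1-\gamma)\SRAsetkTwo$ is exactly what Theorem~\ref{thm:interp} requires. The only cosmetic difference is that one could argue even more briefly by noting each generator $\gamma\overline{x}_1^{(i)}+(1-\gamma)\overline{x}_2^{(j)}$ lies in $\SRAsetkbeta$ by Theorem~\ref{thm:interp} and then invoking convexity of $\SRAsetkbeta$ itself (Theorem~\ref{thm:convex}) to absorb the convex hull, but your version applying convexity to the two source sets is equally valid.
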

Using Theorem~\ref{thm:interp} and
Corollary~\ref{cor:interp}, we can interpolate stochastic
reach sets with minimal computational effort. These
interpolations also yield convex, underapproximative sets,
since Minkowski sums preserve convexity~\cite[Sec.
2.1]{BoydConvex2004}.

\section{Underapproximative stochastic reachability of a target tube using open-loop controllers}
\label{sec:OL}

Next, we address Problem~\ref{prob:compute_poly}.  We use open-loop controllers
to facilitate the computation of an underapproximation of the maximal reach
probability \eqref{prob:MP} and the stochastic reach set \eqref{eq:stochRAsetk}.
We propose a scalable, grid-free, and anytime algorithm
to compute an underapproximative polytope.  
We use a convex chance constraints
approach~\cite{VinodACC2019,LesserCDC2013} for highly
efficient and scalable polytope computation, which also
lowers the computation cost associated with the \ft{}
approach~\cite{VinodHSCC2018}.  Finally, we use
Corollary~\ref{cor:interp} to propose a tractable,
underapproximative, grid-free, and polytopic interpolation
to stochastic reach sets.

\subsection{Formulation of the optimization problem}

An open-loop policy $\rho: \mathcal{X} \rightarrow \mathcal{U}^N$ provides a
sequence of inputs $\rho(\overline{x}_0)={[\overline{u}_0^\top\
\overline{u}_1^\top\ \cdots\ \overline{u}_{N-1}^\top]}^\top$ for every initial condition $\overline{x}_0$. 
Note that all actions of this policy are contingent only on
the initial state, and not the current state like in a Markov
policy $\pi$.
The random vector describing the extended state $\bX$, under the action of $
\overline{U}=\rho(\overline{x}_0)$, lies in the probability space $(
\mathcal{X}^N, \sigmaAlg( \mathcal{X}^N),
\Prob_{\bX}^{\overline{x}_0,\overline{U}})$.
Under $\rho(\cdot)$, the open-loop reach probability is
\begin{align}
    \rarho&\triangleq 1_{\mathcal{T}_0}( \overline{x}_0)\Prob_{\bX}^{\overline{x}_0,\overline{U}}\left\{\bX\in \bigtimes_{k=1}^N\mathcal{T}_k\right\}. \label{eq:uProbHatR}
\end{align}

With optimal open-loop controller $\rho^\ast$, 
we define the \emph{maximal open-loop reach probability} $W_0^\ast: \mathcal{X} \rightarrow [0,1]$ as
\begin{align}
    W_0^\ast( \overline{x}_0)&\triangleq\rarhoast=\sup_{\rho( \overline{x}_0)=\overline{U}\in \mathcal{U}^N}\rarho. \label{prob:OL}
\end{align}
Similarly to \eqref{eq:DP_probA}, we define $W_k: \mathcal{X}\times
\mathcal{U}^{N-k} \rightarrow [0,1]$ for every $k\in \mathbb{N}_{[0,N-1]}$,
\begingroup
\begin{subequations}
    \begin{align}
        W_k( \overline{x}, \overline{U}_{k:N}) &= 1_{\mathcal{T}_k}(
        \overline{x})\int_{\mathcal{X}} W_{k+1}( \overline{y},
        \overline{U}_{k+1:N})Q_k(d\overline{y} \vert \overline{x}, \overline{u}_k) \label{eq:W_recursion}\\
                                               W_0^\ast( \overline{x}_0)&=\sup_{ \overline{U}\in \mathcal{U}^N} W_0( \overline{x}_0, \overline{U})\label{eq:W_ast}
    \end{align}\label{eq:W_defn}%
\end{subequations}%
\endgroup
where $W_N( \overline{x}, \overline{U}_{N:N}) = 1_{\mathcal{T}_N}(
\overline{x})$. Here, for every $k\in \mathbb{N}_{[0,N-1]}$, $\overline{U}_{k:N}={[\overline{u}_k^\top\
\overline{u}_{k+1}^\top\ \cdots\ \overline{u}_{N-1}^\top]}^\top\in
\mathcal{U}^{N-k}$ with $\overline{U}_{k:N}$, $
\overline{U}_{N-1:N}= \overline{u}_{N-1}$, and $
\overline{U}_{0:N} = \overline{U}$. 
Since $W_0( \overline{x}_0, \overline{U})=\rarho$ for $\rho( \overline{x}_0) = \overline{U}$,
the optimization problems \eqref{prob:OL} and
\eqref{eq:W_ast} are equivalent with identical objectives and constraints.
 
Similarly to \eqref{eq:stochRAset}, we define the $\alpha$-superlevel set of $\rarhoast$, %
\begin{align}
    \FSRAset&= \{\overline{x}_0\in \mathcal{X}: \rarhoast\geq \alpha \}.\label{eq:FstochRAset}
\end{align}
\begin{prop}\label{prop:W_prop}
    Let Assumption~\ref{assum:cvx_cmpt} hold. Then, the following statements are
    true:
    {\renewcommand{\theenumi}{\alph{enumi}}
    \begin{enumerate}
        \item $W_0^\ast(\cdot)$ is well-defined and \usc
            over $ \mathcal{X}$.\label{prop:W_prop_usc}
        \item $W_0^\ast( \overline{x})\leq V_0^\ast( \overline{x}),\
            \forall\overline{x}\in \mathcal{X}$, and $\FSRAset\subseteq
            \SRAset,\ \forall\alpha\in[0,1]$.  \label{prop:W_prop_OL_use}
        \item \eqref{prob:OL} is a log-concave optimization problem, $W_0(\cdot, \cdot)$ is log-concave over $ \mathcal{X}\times \mathcal{U}^N$, and $W_0^\ast(\cdot)$ is log-concave over $ \mathcal{X}$.\label{prop:W_prop_convex}
        \item $\FSRAset,\ \forall \alpha\in(0,1]$ is convex and compact.\label{prop:W_prop_cvx_cmpt}
    \end{enumerate}}
\end{prop}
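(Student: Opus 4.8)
The plan is to treat the four claims in the order \ref{prop:W_prop_convex}, \ref{prop:W_prop_usc}, \ref{prop:W_prop_OL_use}, \ref{prop:W_prop_cvx_cmpt}, since log-concavity and upper semi-continuity of $W_0(\cdot,\cdot)$ are the workhorses for everything else. First I would establish that $W_0(\cdot,\cdot)$ is log-concave over $\mathcal{X}\times\mathcal{U}^N$ by a backward induction on the recursion \eqref{eq:W_defn}, mirroring Theorem~\ref{thm:convex}. The base case $W_N(\overline{x},\cdot)=1_{\mathcal{T}_N}(\overline{x})$ is log-concave because $\mathcal{T}_N$ is convex. For the inductive step, the linear dynamics let me rewrite the inner integral in \eqref{eq:W_recursion} as $\int_{\mathcal{X}} W_{k+1}(A_k\overline{x}+B_k\overline{u}_k+\overline{w},\overline{U}_{k+1:N})\psibwk(\overline{w})\,d\overline{w}$; the integrand is a log-concave function composed with an affine map in $(\overline{x},\overline{u}_k,\overline{U}_{k+1:N},\overline{w})$ times the log-concave density $\psibwk$, so log-concavity is preserved under the product and then under marginalization over $\overline{w}$~\cite[Sec.~3.5.2]{BoydConvex2004}. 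Multiplying by the log-concave indicator $1_{\mathcal{T}_k}(\overline{x})$ keeps $W_k$ jointly log-concave. The crucial distinction from Theorem~\ref{thm:convex} is that the open-loop recursion carries the whole control tail $\overline{U}_{k:N}$ as an argument and performs no intermediate supremum, so joint log-concavity in $(\overline{x},\overline{U}_{k:N})$ is retained throughout. Since $\mathcal{U}^N$ is convex, partial maximization of the jointly log-concave $W_0$ over $\overline{U}$ yields a log-concave $W_0^\ast$~\cite[Sec.~3.2.5]{BoydConvex2004}, and \eqref{prob:OL} is precisely the maximization of a log-concave objective over the convex set $\mathcal{U}^N$; this settles part~\ref{prop:W_prop_convex}.

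For part~\ref{prop:W_prop_usc}, I would run the same backward induction to show $W_0(\cdot,\cdot)$ is upper semi-continuous over $\mathcal{X}\times\mathcal{U}^N$ by the machinery of Proposition~\ref{prop:usc} (the sets $\mathcal{T}_k$ are compact, hence closed, so their indicators are \usc, and under the linear dynamics the kernel integral is a convolution against the fixed density $\psibwk$). The cleanest route to passing upper semi-continuity through the integral is a reverse Fatou argument: the trajectory integrand $\prod_k 1_{\mathcal{T}_k}(\overline{x}_k)$ is \usc in the parameters for each fixed disturbance, is bounded by $1$, and the density is integrable, so the $\limsup$ along any convergent sequence of parameters can be pulled inside the expectation. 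Upper semi-continuity of $W_0$ on the compact set $\mathcal{U}^N$ then guarantees the supremum in \eqref{eq:W_ast} is attained, i.e.\ $\rho^\ast$ exists and \eqref{prob:OL} is well-posed. Finally, $\{\overline{x}_0:W_0^\ast(\overline{x}_0)\geq\alpha\}$ equals the projection onto $\mathcal{X}$ of the closed set $\{(\overline{x}_0,\overline{U}):W_0(\overline{x}_0,\overline{U})\geq\alpha\}$ along the compact factor $\mathcal{U}^N$; such a projection is closed, so $W_0^\ast$ is \usc.

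Part~\ref{prop:W_prop_OL_use} is a short structural comparison: a fixed open-loop sequence $\overline{U}=(\overline{u}_0,\ldots,\overline{u}_{N-1})$ is realized by the constant, state-independent Markov policy $\mu_k\equiv\overline{u}_k$, which is Borel-measurable, hence lies in $\mathcal{M}$, and produces exactly the reach probability $\rarho$. Thus the open-loop feasible set embeds into the Markov class, giving $W_0^\ast(\overline{x})=\sup_{\overline{U}}\rarho\leq\sup_{\pi\in\mathcal{M}}\rapi=V_0^\ast(\overline{x})$, and consequently $\FSRAset\subseteq\SRAset$ for all $\alpha$. For part~\ref{prop:W_prop_cvx_cmpt}, convexity of $\FSRAset$ is immediate from part~\ref{prop:W_prop_convex}, since superlevel sets of the quasiconcave $W_0^\ast$ are convex; closedness is immediate from the upper semi-continuity in part~\ref{prop:W_prop_usc}; and boundedness follows from the inclusion $\FSRAset\subseteq\SRAset$ of part~\ref{prop:W_prop_OL_use} together with compactness of $\SRAset$ for $\alpha\in(0,1]$ (Proposition~\ref{prop:cvx_cmpt}). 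Heine--Borel then yields compactness.

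I expect the main obstacle to be part~\ref{prop:W_prop_usc}: both the passage of upper semi-continuity through the kernel integral (where the $\limsup$ must be controlled by domination rather than by assuming continuity) and the closedness of the projection of the superlevel set along $\mathcal{U}^N$ use the compactness of $\mathcal{U}$ in an essential way. By contrast, log-concavity (part~\ref{prop:W_prop_convex}) and the comparison with $V_0^\ast$ (part~\ref{prop:W_prop_OL_use}) are comparatively mechanical once Theorem~\ref{thm:convex} and the constant-policy embedding are in hand.
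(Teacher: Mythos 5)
Your proof is correct, and for parts~\ref{prop:W_prop_usc} (the induction), \ref{prop:W_prop_convex}, and \ref{prop:W_prop_cvx_cmpt} it follows essentially the same route as the paper: backward induction mirroring Theorem~\ref{thm:convex} for log-concavity, the Fatou-based semicontinuity machinery of Proposition~\ref{prop:usc} and Lemma~\ref{lem:usc_intg} for \usc{}, and the chain $\FSRAset\subseteq\SRAset$ plus compactness of $\SRAset$ for part~\ref{prop:W_prop_cvx_cmpt}. You deviate in two places, both validly. For part~\ref{prop:W_prop_OL_use}, the paper proves $W_0(\overline{x},\overline{U})\leq V_0^\ast(\overline{x})$ by a backward induction comparing the recursion \eqref{eq:W_recursion} against \eqref{eq:Vt_recursionQ} (the supremum over $\overline{u}$ at each stage dominates any fixed $\overline{u}_k$); you instead embed each open-loop sequence as the constant Markov policy $\mu_k\equiv\overline{u}_k\in\mathcal{M}$ and compare the two suprema directly. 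Your version is arguably the more conceptual explanation of \emph{why} the inequality holds, but it leans on the identifications $W_0(\overline{x}_0,\overline{U})=\rarho$ and $V_0^\ast(\overline{x}_0)=\rapiopt$ rather than staying at the level of the recursions; the paper's induction is the more self-contained verification. For the \usc{} of $W_0^\ast$ in part~\ref{prop:W_prop_usc}, the paper (via Proposition~\ref{prop:usc}) invokes the semicontinuous selection result~\cite[Prop.~7.33]{BertsekasSOC1978} to pass upper semi-continuity through the partial supremum over the compact set $\mathcal{U}^N$; you instead observe that each superlevel set of $W_0^\ast$ is the projection along the compact factor $\mathcal{U}^N$ of a closed superlevel set of $W_0$, and projections along compact factors are closed. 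This is an elementary and correct substitute (it also gives attainment of the supremum for free), at the cost of not simultaneously delivering the Borel-measurable selector that the Bertsekas result provides. No gaps.
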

The proof of Proposition~\ref{prop:W_prop} is given in
Appendix~\ref{app:proof_W_prop}.

\subsection{Construction of a polytopic underapproximation of $\FSRAset$ under Assumption~\ref{assum:cvx_cmpt}}
\label{sub:poly_algo}

Given a finite set $\mathcal{D}\subset \mathcal{X}$ consisting of direction
vectors $\uDir_i$, we compute a polytopic underapproximation of $\FSRAset$ in
three steps (Figure~\ref{fig:CartoonSetComputation}): 
\begin{enumerate}
    \item find $ \xanchor\in \mathcal{T}_0$, referred to as the \emph{anchor
        point}, such that $W_0^\ast( \xanchor)\geq \alpha$; if no such point
        exists, then $\FSRAset=\emptyset$; else, continue to step 2,
    \item obtain relative boundary points of the set $\FSRAset$ via line
        searches from $\xanchor$ along the directions $\uDir_i\in \mathcal{D}$, and 
    \item compute the convex hull of the relative boundary points, i.e., the
    polytope $\uFSRAset$.  
\end{enumerate}
By~\cite[Sec.  2.1.4]{BoydConvex2004} and
Proposition~\ref{prop:W_prop}\ref{prop:W_prop_OL_use}, 
\begin{align}
    \uFSRAset\subseteq\FSRAset\subseteq \SRAset\label{eq:underapprox_chain}.
\end{align}
Thus, we can utilize the compactness and convexity of
$\FSRAset$
(Proposition~\ref{prop:W_prop}\ref{prop:W_prop_cvx_cmpt}) to
obtain a polytopic underapproximation $\uFSRAset$. We have
$\uFSRAset=\FSRAset$ when all of the extreme points of
$\FSRAset$ are discovered in step 2. This is possible for a
polytopic $\FSRAset$~\cite[Thm.
2.6.16]{webster1994convexity}.

\begin{figure}
    \centering
    \includegraphics[width=0.5\linewidth]{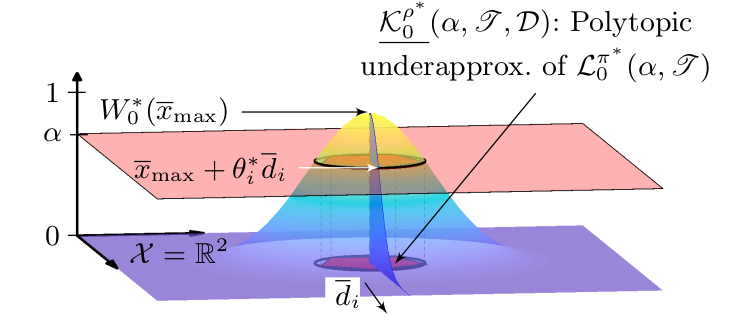}
    \caption{Components of Algorithm~\ref{algo:poly} using $\xanchor=\xmax$. We
    solve \eqref{prob:xmax_compute} to compute $W_0^\ast(
\overline{x}_{\mathrm{max}})$, and then solve \eqref{prob:gamma_ext_W} for each
$ \overline{d}_i\in \mathcal{D}$ to compute the vertices of $\uFSRAset$. We
construct $\uFSRAset$ via the convex hull of the vertices.}
    \label{fig:CartoonSetComputation}
\end{figure}

First, we propose two possible anchor points $ \xanchor$:
1) $\xmax$, the maximizer of $W_0^\ast( \overline{x})$, and 2) $\overline{x}_c$,
a Chebyshev center-like point.  For the former,
the point $\xmax$ can be computed by solving the optimization problem,
\begin{align}
    \setlength\arraycolsep{2pt}
     \begin{array}{rl}
         \underset{\overline{x}, \overline{U}, \phi}{\mathrm{maximize}}& \phi\\
         \mathrm{subject\ to}& \left\{
             \begin{array}{l}
                 \overline{x}\in \mathcal{T}_0,\ \overline{U}\in \mathcal{U}^N,\ \phi\in [0,1],\\
                 W_0( \overline{x},\overline{U})\geq \phi\geq \alpha\\
             \end{array}\right.
    \end{array}
 \label{prob:xmax_compute}
\end{align}
Problem \eqref{prob:xmax_compute} is the epigraph form of the optimization
problem $\sup_{
\overline{x}_0\in \FSRAset} W_0^\ast( \overline{x}_0)$~\cite[Eq.
4.11]{BoydConvex2004}.

For the latter, 
$\overline{x}_c$ can be computed by solving the following
optimization problem, motivated by the Chebyshev centering problem~\cite[Sec.
8.5.1]{BoydConvex2004},
\begin{align}
    \setlength\arraycolsep{2pt}
     \begin{array}{rl}
         \underset{\overline{x}, \overline{U}, R}{\mathrm{maximize}}& R\\
         \mathrm{subject\ to}& \left\{
             \begin{array}{l}
                 \{\overline{x}\} \oplus \mathrm{Ball}(\overline{0},R)\subseteq \mathcal{T}_0\\
                 W_0( \overline{x},\overline{U})\geq \alpha,\ \overline{U}\in
                 \mathcal{U}^N,\ R\geq 0\\
             \end{array}\right.
    \end{array}
 \label{prob:xmax_compute_cheby}
\end{align}
Problem \eqref{prob:xmax_compute_cheby} seeks an anchor point ``deep''
inside $ \mathcal{T}_0$, while ensuring $W_0(
\overline{x},\overline{U})\geq \alpha$ for some $ \overline{U}\in
\mathcal{U}^N$. The centering constraint, $\{\overline{x}\} \oplus
\mathrm{Ball}(\overline{0},R)\subseteq \mathcal{T}_0$, is equivalently expressed
as $\overline{x} + \overline{s}\in \mathcal{T}_0,\ \forall
\overline{s}\in\mathrm{Ball}(\overline{0},R)$. For a polytopic $ \mathcal{T}_0$,
this is a second order-cone (convex) constraint, and can be enforced
efficiently~\cite[Sec. 8.5.1]{BoydConvex2004}.

\begin{prop}\label{prop:cvx_xanchor}
    Under Assumption~\ref{assum:cvx_cmpt}, \eqref{prob:xmax_compute}
    and \eqref{prob:xmax_compute_cheby} are convex optimization problems.
\end{prop}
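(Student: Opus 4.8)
The plan is to verify, for each of the two problems, that the objective is concave (we are maximizing) and that the feasible region is convex, after a harmless monotone reparametrization where needed. The two ingredients I would lean on are Proposition~\ref{prop:W_prop}\ref{prop:W_prop_convex}, which gives that $W_0(\cdot,\cdot)$ is log-concave over $\mathcal{X}\times\mathcal{U}^N$, and Assumption~\ref{assum:cvx_cmpt}, which gives that $\mathcal{T}_0$ is convex (and compact) and $\mathcal{U}$ is convex, hence $\mathcal{U}^N$ is convex.

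For \eqref{prob:xmax_compute}, the objective $\phi$ is linear, and the constraints $\overline{x}\in\mathcal{T}_0$, $\overline{U}\in\mathcal{U}^N$, $\phi\in[0,1]$, and $\phi\ge\alpha$ each cut out a convex set. The only constraint requiring care is the coupling inequality $W_0(\overline{x},\overline{U})\ge\phi$. First I would note that $\alpha>0$ forces $\phi\ge\alpha>0$ on the feasible set, so taking logarithms is legitimate and the inequality is equivalent to $\log\phi\le\log W_0(\overline{x},\overline{U})$. I would then substitute $t:=\log\phi$; since $\phi\mapsto\log\phi$ is increasing, maximizing $\phi$ is equivalent to maximizing $t$, which is linear. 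In the $(\overline{x},\overline{U},t)$ coordinates the constraint reads $t\le\log W_0(\overline{x},\overline{U})$, i.e.\ it is the hypograph of the concave function $\log W_0$ (concave by log-concavity of $W_0$), and is therefore convex. With all remaining constraints convex, this exhibits \eqref{prob:xmax_compute} as a convex program.

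For \eqref{prob:xmax_compute_cheby}, the objective $R$ is linear, and $\overline{U}\in\mathcal{U}^N$ and $R\ge0$ are convex. The constraint $W_0(\overline{x},\overline{U})\ge\alpha$ is the $\alpha$-superlevel set of the log-concave (hence quasiconcave) function $W_0$, which is convex. The remaining constraint is the centering condition $\{\overline{x}\}\oplus\mathrm{Ball}(\overline{0},R)\subseteq\mathcal{T}_0$; I would show the set of feasible $(\overline{x},R)$ is convex directly. Using that balls scale linearly, $\mathrm{Ball}(\overline{0},\lambda R_1+(1-\lambda)R_2)=\lambda\,\mathrm{Ball}(\overline{0},R_1)\oplus(1-\lambda)\,\mathrm{Ball}(\overline{0},R_2)$, so any admissible perturbation of a convex combination of two feasible centers splits as $\lambda(\overline{x}_1+\overline{s}_1)+(1-\lambda)(\overline{x}_2+\overline{s}_2)$ with each $\overline{x}_i+\overline{s}_i\in\mathcal{T}_0$; convexity of $\mathcal{T}_0$ then closes the argument. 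For polytopic $\mathcal{T}_0$ this reduces to the second-order-cone form noted after \eqref{prob:xmax_compute_cheby} and cited from~\cite[Sec.~8.5.1]{BoydConvex2004}.

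The \emph{main obstacle} is the coupling constraint $W_0(\overline{x},\overline{U})\ge\phi$ in \eqref{prob:xmax_compute}: because $W_0$ is only log-concave rather than concave, its hypograph need not be convex in the native $(\overline{x},\overline{U},\phi)$ coordinates---the log-concavity inequality controls only the geometric mean $\phi_1^\lambda\phi_2^{1-\lambda}$ of two feasible heights, which falls below the arithmetic mean $\lambda\phi_1+(1-\lambda)\phi_2$ required for hypograph convexity. Hence the $t=\log\phi$ reparametrization is essential, and it is exactly the strict positivity guaranteed by $\alpha>0$ that makes it valid. The Chebyshev centering constraint is the only other point needing a short verification, but it is routine once balls are recognized to scale linearly under the Minkowski sum.
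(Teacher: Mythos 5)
Your proof is correct, and it takes a genuinely different --- and in fact more careful --- route than the paper's. The paper's own proof is one step shorter: it invokes Proposition~\ref{prop:W_prop}\ref{prop:W_prop_convex} to say $W_0$ is quasiconcave, and then asserts that $W_0(\overline{x},\overline{U})\geq\phi$ is therefore ``a convex constraint involving $\overline{x}$, $\overline{U}$, and $\phi$.'' That assertion is only literally true when $\phi$ is held fixed (which is why it suffices for \eqref{prob:xmax_compute_cheby} and \eqref{prob:gamma_ext_W}, where the level is the constant $\alpha$): quasiconcavity makes each superlevel set convex, but it does not make the hypograph $\{(\overline{x},\overline{U},\phi):\phi\leq W_0(\overline{x},\overline{U})\}$ convex, which is what \eqref{prob:xmax_compute} needs since $\phi$ is a decision variable there. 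You identify exactly this obstacle and resolve it with the $t=\log\phi$ change of variables, using the full strength of log-concavity (rather than just quasiconcavity) so that $t\leq\log W_0(\overline{x},\overline{U})$ becomes the hypograph of a concave function; the positivity $\phi\geq\alpha>0$ makes the reparametrization legitimate, as you note. Your direct verification that the centering constraint $\{\overline{x}\}\oplus\mathrm{Ball}(\overline{0},R)\subseteq\mathcal{T}_0$ cuts out a convex set in $(\overline{x},R)$ is also more explicit than the paper's citation to~\cite[Sec.~8.5.1]{BoydConvex2004}. In short, the paper's proof buys brevity at the cost of a step that, read strictly, does not hold for \eqref{prob:xmax_compute}; your proof supplies the missing reformulation under which that problem is genuinely convex (equivalently, it makes precise the sense in which a log-concave maximization is a convex program).
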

\begin{proof}
    We know $W_0(\overline{x}, \overline{U})$ is a quasiconcave function over $
    \mathcal{X}\times \mathcal{U}$ by
    Proposition~\ref{prop:W_prop}\ref{prop:W_prop_convex} and the fact that
    log-concave functions are quasiconcave~\cite[Sec. 3.5]{BoydConvex2004}.
    Therefore, $W_0( \overline{x}, \overline{U})\geq \phi$ is a convex
    constraint involving $\overline{x}, \overline{U}$, and $\phi$. Hence
    \eqref{prob:xmax_compute} and \eqref{prob:xmax_compute_cheby} are
    convex, since all other constraints and their respective
    objectives are convex. 
\end{proof}

For non-empty sets $\mathcal{T}_k$ and $\mathcal{U}$,
the optimization problems \eqref{prob:xmax_compute} or
\eqref{prob:xmax_compute_cheby} are infeasible if and only
if $\FSRAset$ is empty. However, $\FSRAset$ being empty does
not imply $\SRAset$ is empty, due to the underapproximative
nature of our approach.

Next, we compute the relative boundary points of $\FSRAset$ by solving
\eqref{prob:gamma_ext_W} for each $i\in \mathbb{N}_{[1,\Npoly]}$,
\begin{align}
    \setlength\arraycolsep{2pt}
     \begin{array}{rl}
         \underset{\theta_i, \overline{U}_i}{\mathrm{maximize}}& \theta_i\\
     \mathrm{subject\ to}& 
        \left\{\begin{array}{l}
                   \overline{U}_i\in \mathcal{U}^N,\quad \theta_i\in \mathbb{R},\quad \theta_i\geq 0\\
                    W_0(\xanchor + \theta_i\uDir_i, \overline{U}_i)\geq\alpha\\
               \end{array}\right.
    \end{array}.\label{prob:gamma_ext_W}
\end{align}%
Problem \eqref{prob:gamma_ext_W} is a line search problem~\cite[Sec.
9.3]{BoydConvex2004}. By Proposition~\ref{prop:cvx_xanchor},
\eqref{prob:gamma_ext_W} is a convex optimization problem under
Assumption~\ref{assum:cvx_cmpt}.

Denote the optimal solution of \eqref{prob:gamma_ext_W} as $\theta^\ast_i$
and $ \overline{U}^\ast_i$.  By construction, $W_0(\xanchor +
\theta_i^\ast\uDir_i, \overline{U}_i^\ast)=\alpha$, and for any $\epsilon>0$,
$W_0(\xanchor + (\theta_i^\ast+\epsilon)\uDir_i, \overline{U}_i^\ast)<\alpha$.
Hence, $\xanchor + \theta_i^\ast \uDir_i$ is a relative boundary point of
$\FSRAset$, and an
open-loop controller that ensures $W_0(\xanchor + \theta_i^\ast\uDir_i,
\overline{U}_i^\ast)\geq\alpha$ is $\rho^\ast(\xanchor + \theta_i^\ast \uDir_i)=
\overline{U}_i^\ast$. Note that $W_0(\xanchor + \theta_i^\ast\uDir_i,
\overline{U}_i^\ast)$ is not necessarily equal to the maximal open-loop reach
probability $W_0^\ast(\xanchor +
\theta_i^\ast\uDir_i)$, since we have only required that
$W_0(\xanchor + \theta_i^\ast\uDir_i,
\overline{U}_i^\ast)\geq \alpha$ in \eqref{prob:gamma_ext_W}.

We construct the polytope $\uFSRAset$ via the convex hull of the
computed relative boundary points $\xanchor + \theta_i^\ast \uDir_i,\ \forall i\in
\mathbb{N}_{[1,\Npoly]}$.
We have $\uFSRAset\subseteq \FSRAset$, since $\FSRAset$ is convex and compact,
and the vertices of $\uFSRAset$ lie in $\partial\FSRAset$~\cite[Sec. 2.1.4]{BoydConvex2004}.

We summarize our approach in Algorithm~\ref{algo:poly}, which solves
Problem~\ref{prob:compute_poly}.

\begin{algorithm}
    \caption{Compute polytope $\uFSRAset$, an underapproximation of the stochastic reach set $\SRAset$}\label{algo:poly}
    \begin{algorithmic}[1]    
        \Require{System \eqref{eq:lin}, target tube
        $\targettube$, probability threshold $\alpha$, set of direction vectors $\mathcal{D}$}
        \Ensure{$\uFSRAset\subseteq \FSRAset\subseteq\SRAset$}     
    \State Solve \eqref{prob:xmax_compute} or \eqref{prob:xmax_compute_cheby} to
    compute $\xanchor$\label{step:xmax}
    \If{\eqref{prob:xmax_compute} or \eqref{prob:xmax_compute_cheby} is infeasible}
        \State $\uFSRAset\gets \emptyset$
    \Else%
        \For{$\uDir_i\in \mathcal{D}$}
            \State Solve \eqref{prob:gamma_ext_W} to compute a relative boundary
            point $\xanchor + \theta_i^\ast \uDir_i$ and an open-loop
            controller $\rho^\ast(\xanchor + \theta_i^\ast \uDir_i)$
	    \EndFor
        \State $\uFSRAset\gets\conv{\Npoly}(\xanchor + \theta_i^\ast \uDir_i)$
    \EndIf
  \end{algorithmic}
\end{algorithm}

\subsection{Implementation of Algorithm~\ref{algo:poly}}
\label{sub:poly_imple}

Algorithm~\ref{algo:poly} is an anytime, parallelizable algorithm. The
anytime property follows from the observation that the convex hull of the
solutions of \eqref{prob:gamma_ext_W} for an arbitrary subset of $ \mathcal{D}$
also yields a valid underapproximation, permitting premature termination.  The
parallelizability of Algorithm~\ref{algo:poly} is due to the fact that the
computations along each of the directions $\uDir_i$ are independent.

Computing $\xanchor$ is a significant part of
Algorithm~\ref{algo:poly}. 
Using $\overline{x}_c$ may be advantageous because 
directions typically yield non-trivial relative
boundary points, however, %
it is possible that  $
\overline{x}_c$ is a relative boundary point of $\uFSRAset$
(e.g., Figure~\ref{fig:CWH}, bottom). Additionally, solving
\eqref{prob:xmax_compute_cheby} requires more computational effort due to
the second-order cone constraint.  However, the ``best'' choice may be 
problem-specific.  Indeed, using multiple anchor
points could be advantageous, as the convex hull of the
union of the resulting underapproximations could yield a
significantly larger underapproximative set.
The implementation of Algorithm~\ref{algo:poly} in
\texttt{SReachTools} provides all three of these options.
\texttt{SReachTools} is an open-source \texttt{MATLAB} toolbox for
stochastic reachability \cite{sreachtools}.
\texttt{SReachTools} has participated in several
repeatability-evaluations including the recent ARCH
initiative for stochastic modeling and
verification~\cite{Abate2020ARCH}.

Denoting the computation times to solve for $\xanchor$
(\eqref{prob:xmax_compute} or \eqref{prob:xmax_compute_cheby}) and \eqref{prob:gamma_ext_W} as $t_{\mathrm{anchor}}$ and $t_{\mathrm{line}}$, respectively, the computation time for Algorithm~\ref{algo:poly} is $ \mathcal{O}(t_{\mathrm{anchor}}+t_{\mathrm{line}}\Npoly)$.
Since \eqref{prob:xmax_compute}, \eqref{prob:xmax_compute_cheby}, and \eqref{prob:gamma_ext_W} are convex problems, globally optimal solutions are assured with (potentially) low $t_{\mathrm{anchor}}$ and $t_{\mathrm{line}}$.
However, the joint chance constraint $W_0(\overline{x}, \overline{U})\geq \phi$ is not solver-friendly, since we do not have a closed-form expression for $W_0(\overline{x}, \overline{U})$, or an exact reformulation into a conic constraint.
In Section~\ref{sub:GaussLTV} (see Table~\ref{tab:implement}), we discuss computationally efficient methods to enforce this constraint under additional assumptions.

The memory requirements of Algorithm~\ref{algo:poly} grow linearly with $\Npoly$
and are independent of the system dimension.
The choice of $ \mathcal{D}$ influences the quality (in terms of volume) of
underapproximation provided by Algorithm~\ref{algo:poly}.  
In contrast, dynamic programming requires an exponential number of grid points in memory, leading to the curse of dimensionality~\cite{AbateHSCC2007}.
Algorithm~\ref{algo:poly} is grid-free and recursion-free, and it scales favorably with the system dimension, as compared to dynamic programming.

\emph{Open-loop controller synthesis}: %
As a side product of Algorithm~\ref{algo:poly}, solving \eqref{prob:gamma_ext_W}
provides valid open-loop controllers for the vertices of
$\uFSRAset$ that satisfy \eqref{eq:FstochRAset}.
Denoting the vertices of $\uFSRAset$ as
$\{\overline{v}_i\}_{i=1}^{|\mathcal{D}|}$, any initial state of
interest $\overline{x}_0\in\uFSRAset$ can be expressed as the convex combination of the vertices, $
\overline{x}_0=\sum_{i=1}^{| \mathcal{D}|}\gamma_i\overline{v}_i$ for some $\gamma_i\in[0,1]$ and
$\sum_{i=1}^{| \mathcal{D}|}\gamma_i=1$~\cite[Ch. 2]{webster1994convexity}.
Consequently, $\sum_{i=1}^{| \mathcal{D}|}\gamma_i\overline{U}_i^\ast$ is a good
initial guess to solve \eqref{prob:OL} at $ \overline{x}_0$.

\subsection{Tractable underapproximative interpolation}
\label{sub:poly_interp}

In scenarios where stochastic reach sets $\uFSRAsetcdot$
must be computed at multiple probability thresholds, we
propose a computationally efficient algorithm that combines
Algorithm~\ref{algo:poly} and Corollary~\ref{cor:interp}.
Specifically, we utilize Algorithm~\ref{algo:poly} to
compute the underapproximations at two specific probability
thresholds $\alpha_1,\alpha_2\in(0,1]$ with
$\alpha_1<\alpha_2$, and then utilize
Corollary~\ref{cor:interp} to obtain underapproximative
interpolations for all  probability thresholds
$\beta\in[\alpha_1,\alpha_2]$. We summarize this approach in
Algorithm~\ref{algo:interp}. Algorithm~\ref{algo:interp}
enables real-time stochastic reachability by computing a few
stochastic reach sets offline and then interpolating them
online.

\begin{algorithm}
    \caption{Underapproximative interpolation}\label{algo:interp}
    \begin{algorithmic}[1]    
        \Require{Probability thresholds $\alpha_1,\alpha_2\in(0,1]$ with
            $\alpha_1< \alpha_2$, sets of direction vectors
            $\mathcal{D}_1$ and $\mathcal{D}_2$, probability thresholds of
            interest
    $\beta_i\in[\alpha_1,\alpha_2]$ for $i\in\mathbb{N}_{[1,N_\beta]}$}
        \Ensure{Polytope $\mathcal{S}_i\subseteq\SRAsetbetai$ for $i\in\mathbb{N}_{[1,N_\beta]}$}     
        \Statex \textbf{Offline (independent of $\beta_{(\cdot)}$)}:
        \State Compute sets $\uFSRAsetOne$ and $\uFSRAsetTwo$ using
        Algorithm~\ref{algo:poly}
        \Statex \textbf{Online (depends on $\beta_{(\cdot)}$)}:
        \setcounter{ALG@line}{0}
        \For{each $\beta_i$}
        \State
        $\gamma_i=\frac{\log(\alpha_2)-\log(\beta_i)}{\log(\alpha_2)-\log(\alpha_1)}\in[0,1]$
        \State $\mathcal{S}_i\gets\gamma_i\uFSRAsetOne\oplus(1-\gamma_i)\uFSRAsetTwo$ 
        \EndFor
  \end{algorithmic}
\end{algorithm}

\subsection{Gaussian linear time-varying systems with polytopic input space and polytopic target tube}
\label{sub:GaussLTV}

\begin{assum}\label{assum:Gauss}
    Presume Assumption~\ref{assum:cvx_cmpt}, polytopic $ \mathcal{U}$ and
    $ \mathcal{T}_k$ for every $k\in \mathbb{N}_{[0,N]}$, and Gaussian $\bw_k\sim \mathcal{N}( \overline{\mu}_{\bw,k}, C_{\bw,k})$, $ \overline{\mu}_{\bw,k}\in \mathbb{R}^n, C_{\bw,k}\in \mathbb{R}^{n\times n}$.
\end{assum}
The concatenated disturbance random vector is $\bW\sim \mathcal{N}( \overline{\mu}_{\bW}, C_{\bW})$, where $ \overline{\mu}_{\bW}={[ \overline{\mu}_{\bw,0}^\top\ \cdots\ \overline{\mu}_{\bw,N-1}^\top]}^\top\in \mathbb{R}^{nN}$ and $C_{\bW}= \mathrm{blkdiag}(C_{\bw,0},\ldots,C_{\bw,N-1})\in \mathbb{R}^{nN\times nN}$, with $ \mathrm{blkdiag}(\cdot)$ indicating block diagonal matrix construction.
Due to the linearity of the system \eqref{eq:lin}, $\bX$ is also Gaussian~\cite[Sec. 9.2]{GubnerProbability2006}.
Given an initial state $ \overline{x}_0\in \mathcal{X}$ and an open-loop vector $ \overline{U}\in \mathcal{U}^N$,
\newcommand{\mubX}{\overline{\mu}_{\bX}}
\newcommand{\CbX}{C_{\bX}}
\begin{subequations}
    \begin{align}
        \bX&\sim \mathcal{N}(\mubX,\CbX) \label{eq:X_U_pdf},\\
        \mubX&=\mathscr{A} \overline{x}_0 + H\overline{U} +G\overline{\mu}_{\bW}\label{eq:X_U_mu},\\
        \CbX&=G C_{\bW} G^\top,\label{eq:X_U_cov}
    \end{align}\label{eq:X_U_dist}%
\end{subequations}%
where $\bX= \mathscr{A} \overline{x}_0 + H \overline{U} + G \bW$.
The matrices $\mathscr{A}, H, G$ account for how the dynamics \eqref{eq:lin}
influence the mean and the covariance of $\bX$ (see~\cite{SkafTAC2010} for
details).

Algorithm~\ref{algo:poly} requires an efficient enforcement of joint chance
constraints, $W_0( \overline{x}, \overline{U})\geq \phi$ in
\eqref{prob:xmax_compute}, \eqref{prob:xmax_compute_cheby}, and
\eqref{prob:gamma_ext_W}.
Under Assumption~\ref{assum:Gauss}, $W_0( \overline{x},
\overline{U})$ is the integration of a Gaussian probability
density function over a polytope. %

\begin{table}
   \centering
   \caption{Enforcing $W_0( \overline{x}_0, \overline{U})\geq \phi$ under Assumption~\ref{assum:Gauss}.}
   \label{tab:implement}
    \begin{tabular}{|m{2.1cm}||m{3cm}|m{2.5cm}|}
    \hline 
    Approach  & Approximation & Solver \\ \hline\hline
    Convex chance\newline constraints\newline\cite{LesserCDC2013,
OnoCDC2008,VinodACC2019} & Convex restriction via Boole's inequality
    & Linear\newline  program
    \\ \hline
    Fourier\newline transform %
    \newline 
    \cite{VinodLCSS2017, VinodHSCC2018} & Approximates $W_0(\cdot)$ via
    Genz's algorithm (quasi-Monte Carlo simulation~\cite{GenzJCGS1992})%
                                        & Gradient-free\newline
                                        optimization
                                        solver~\cite{kolda2003optimization}\newline
                                        (\texttt{patternsearch})\\ \hline
   \end{tabular}%
\end{table}
\begin{figure}
    \centering
    \adjustbox{width=0.6\linewidth}{
    \tikzstyle{block} = [rectangle, draw, text width=5em, text centered, rounded corners, minimum height=2em]
    \tikzstyle{line} = [draw, -latex', very thick]
    \begin{tikzpicture}[node distance = 11em, auto]
        \node [block, text width=9em] (orig) {Stochastic reachability problem \eqref{prob:MP} $\SRAset$};
        \node [block, text width=10em, xshift= 8.5em, right of=orig] (openloop) {Open-loop underapproximation \eqref{prob:OL}  $\FSRAset$};
        \node [block, text width=10em, yshift= 2em, below of=openloop]
            (polytope) {Alg.~\ref{algo:poly}: Polytopic
            \text{underapproximation}\linebreak $\uFSRAset$};
        \node [block, text width=9em, xshift=-8.5em, left of=polytope] (ccc)
            {Convex chance constraints~\cite{OnoCDC2008, LesserCDC2013} for
            \eqref{prob:xmax_compute} -- \eqref{prob:gamma_ext_W}};
        \path [line] (orig) -- (openloop) node [text centered, above, midway,
            text width=9em] {Relax state-feedback constraint} node [text centered, below, midway, text width=8em] {Proposition~\ref{prop:W_prop}\ref{prop:W_prop_OL_use}}; 
        \path [line] (openloop) -- (polytope) node [text centered, left, midway,
            text width=15.25em, xshift=0em, yshift=0.25em] {Finite subset of
            relative boundary points~\cite[Ch. 2]{BoydConvex2004}\cite[Ch. 2]{webster1994convexity}};
        \path [line] (polytope) -- (ccc) node [text centered, above, midway, text width=11em] {Risk allocation } node [text centered, below, midway, text width=11em] {Boole's inequality};
    \end{tikzpicture}}
    \caption{Underapproximative steps taken to compute the polytopic underapproximation $\uFSRAset$ of the stochastic reach set $\SRAset$ (Section~\ref{sub:GaussLTV}) via chance constraints.}\label{fig:underapprox} 
\end{figure}

Table~\ref{tab:implement} describes two approaches to approximate $W_0(
\overline{x}, \overline{U})$.  
We implement convex chance constraints via risk
allocation~\cite{LesserCDC2013, OnoCDC2008, VinodACC2019}. However, in contrast
to solving a series of linear programs~\cite{OnoCDC2008} or relying on nonlinear
solvers~\cite{LesserCDC2013}, we use piecewise affine approximation to obtain a
collection of linear constraints that conservatively enforce
the constraint $W_0( \overline{x}, \overline{U}) \geq \phi$
at significantly lower computational costs~\cite{VinodACC2019}.
This ensures that \eqref{prob:xmax_compute} and
\eqref{prob:gamma_ext_W} are linear programs,
\eqref{prob:xmax_compute_cheby} is a second order-cone program, and %
enables the use of standard conic solvers~\cite{CVX,mosek}. 
In the \ft{} approach, we initialize \texttt{MATLAB}'s
\texttt{patternsearch} using the solution obtained with the
chance constraints implementation. We use anchor points $ \overline{x}_c$
with convex chance
constraints and $\xmax$ with the \ft{} approach.  %
Figure~\ref{fig:underapprox} summarizes the conservativeness
introduced at different stages of the
convex chance constraints approach.

\section{Numerical results}
\label{sec:num}

All computations were performed on a standard laptop with an Intel
i7-4600U CPU with 4 cores, 2.1GHz clock rate and 7.5 GB RAM.
We used \texttt{SReachTools}~\cite{sreachtools} in a
\texttt{MATLAB} 2018 environment, with 
\texttt{MPT3}~\cite{MPT3}, \texttt{CVX}~\cite{CVX}, and
\texttt{MOSEK}~\cite{mosek} in the simulations. The code is
available online at
\url{https://github.com/unm-hscl/abyvinod-SRTT-2020.git}.

\subsection{Integrator chain: Interpolation \& scalability}

Consider a chain of integrators,
{
\begin{align}
    \bx_{k+1}&= \left[ {\begin{array}{ccccc} 
    1 & N_s & \frac{N_s^2}{2} & \cdots  & \frac{N_s^{n-1}}{(n-1)!}   \\   
    0 & 1   & N_s              & \cdots  &   \\  
    \vdots & &                 & \ddots  & \vdots                      \\
    0 & 0 & 0 & \cdots & 1                      \\
    \end{array} } \right]\bx_k + \left[\begin{array}{c}
    \frac{ N_s^n }{n!}\\ \frac{N_s^{n-1}}{(n-1)!} \\\vdots\\ N_s  
    \end{array}\right] u_k + \bw_k \nonumber %
\end{align}}\normalsize
with state $\bx_k\in \mathbb{R}^n$, input $u_k\in
\mathcal{U}\subset \mathbb{R}$, a Gaussian
disturbance $\bw_k \sim \mathcal{N}( \overline{0}_n,0.01I_n)$, sampling time
$N_s=0.1$, and time horizon $N$.  Here, $I_n$ refers to the $n$-dimensional
identity matrix and $ \overline{0}_n$ is the $n$-dimensional zero vector.

For the double integrator ($n=2$), we consider the stochastic viability problem \cite{AbateHSCC2007}, with
$\mathcal{T}_k={[-1,1]}^2$, $k\in \mathbb{N}_{[0,N]}$ for
$N=10$, and %
$ \mathcal{U}=[-0.1,0.1]$.  We compute
$\uFSRAset$ using Algorithm~\ref{algo:poly}
via convex
chance constraints (\texttt{SReachSetCcO} in
\texttt{SReachTools}) with $\Npoly=32$, %
and compare it to
$\SRAset$ from grid-based dynamic programming
(\texttt{SReachDynProg} in \texttt{SReachTools}), with 
grid spacing of 0.05 in the state and input spaces.

Figure~\ref{fig:intg}a shows that Algorithm~\ref{algo:poly},
which computes $\uFSRAset$, provides a good
underapproximation of the true stochastic reach set
$\SRAset$.  The advantage of using state-feedback $\pi^\ast$
over an open loop controller $\rho^\ast$ is seen in the
underapproximation ``gaps'' between the polytopes
(Proposition~\ref{prop:W_prop}\ref{prop:W_prop_OL_use}).
In Figure~\ref{fig:intg}b,
the interpolated polytopic underapproximation (from
Corollary~\ref{cor:interp} and Algorithm~\ref{algo:interp})
provides a good approximation of the true sets $\SRAsetbeta$
and $\FSRAsetbeta$ at $\beta=0.85$.

To demonstrate scalability, we consider the stochastic reach-avoid problem
with a $40$D integrator, with $N=5$,
$\mathcal{T}_k={[-10,10]}^{40}$ for $k\in \mathbb{N}_{[0,N-1]}$,
$ \mathcal{T}_N = {[-8,8]}^{40}$, and %
$ \mathcal{U}=[-1,1]$.  Dynamic
programming is clearly not feasible for comparison.
A $2$D slice of the polytopic underapproximation, at $ \overline{x}_0={[x_1\ x_2\ 0\ 0\ \cdots\ 0]}^\top\in
\mathbb{R}^{40}$, is shown in 
Figure~\ref{fig:intg}c for $\Npoly=8$, computed via Algorithm~\ref{algo:poly}
with convex chance constraints.  As shown in
Figure~\ref{fig:intg}d, that the underapproximative
interpolation obtained from Algorithm~\ref{algo:interp}
appears to be tight; the high ratio ($0.994$) of the volume
of interpolated polytopic underapproximation to the volume
of $\uFSRAset$ confirms this.

\begin{table}
    \caption{Computation time (in seconds) for a chain of integrators.}
    \label{tab:computeTime}
    \setlength\tabcolsep{2pt}
    \centering
    \begin{tabular}{c|c|c|c||c|c|c|}
        \cline{2-7}									%
        & \multicolumn{3}{c||}{$n=2$ ($\Npoly = 32$)}& \multicolumn{3}{c|}{$n=40$ ($\Npoly=8$)}\\\hline
    \multicolumn{1}{|c|}{$\alpha$} & $0.6$  & $0.85$ & $0.9$ &$0.6$  & $0.85$ & $0.9$ \\\hline\hline
    \multicolumn{1}{|c||}{Algorithm~\ref{algo:poly}} & 
    $11.86$ & $10.12$ &  $9.80$ &
    $921.29$ & $842.29$ & $781.78$ \\\hline
    \multicolumn{1}{|c||}{Algorithm~\ref{algo:interp}} & --
                                                       &
    $0.006$ & --  & -- & $0.013$ & -- \\\hline
    \multicolumn{1}{|c||}{Dyn. prog.} & \multicolumn{3}{c||}{
    $5.94$} & \multicolumn{3}{c|}{\multirow{2}{*}{Not possible}} \\\cline{1-4}
    \multicolumn{1}{|c||}{Theorem~\ref{thm:interp}} & -- &
    $0.035$ & --  & \multicolumn{3}{c|}{} \\\hline
   \end{tabular}
\end{table}

As expected, Algorithm~\ref{algo:poly} outperforms dynamic programming in computation time 
(Table~\ref{tab:computeTime}).
This is a direct
consequence of the convexity, compactness, and
underapproximative properties established in
Section~\ref{sec:OL}. The interpolation scheme provided in 
Algorithm~\ref{algo:interp}
is faster than Algorithm~\ref{algo:poly} 
by three orders of magnitude in $2$D,
and by four
orders of magnitude in $40$D.

\begin{figure*}
    \centering
    \newcommand{\trimValuesIntg}{445 80 490 10}
    \includegraphics[Trim=\trimValuesIntg, clip, width=0.24\linewidth]{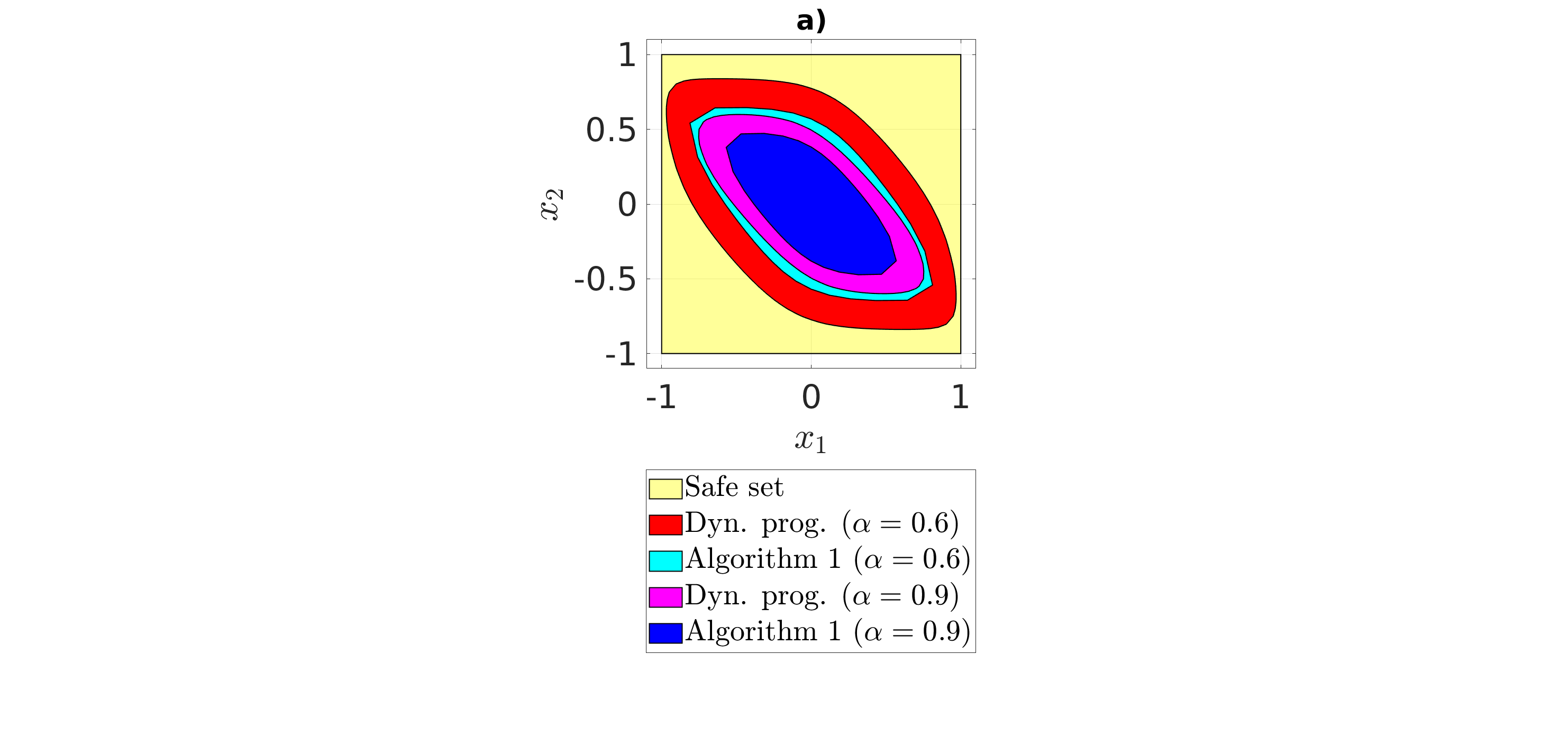}
    \includegraphics[Trim=\trimValuesIntg, clip, width=0.24\linewidth]{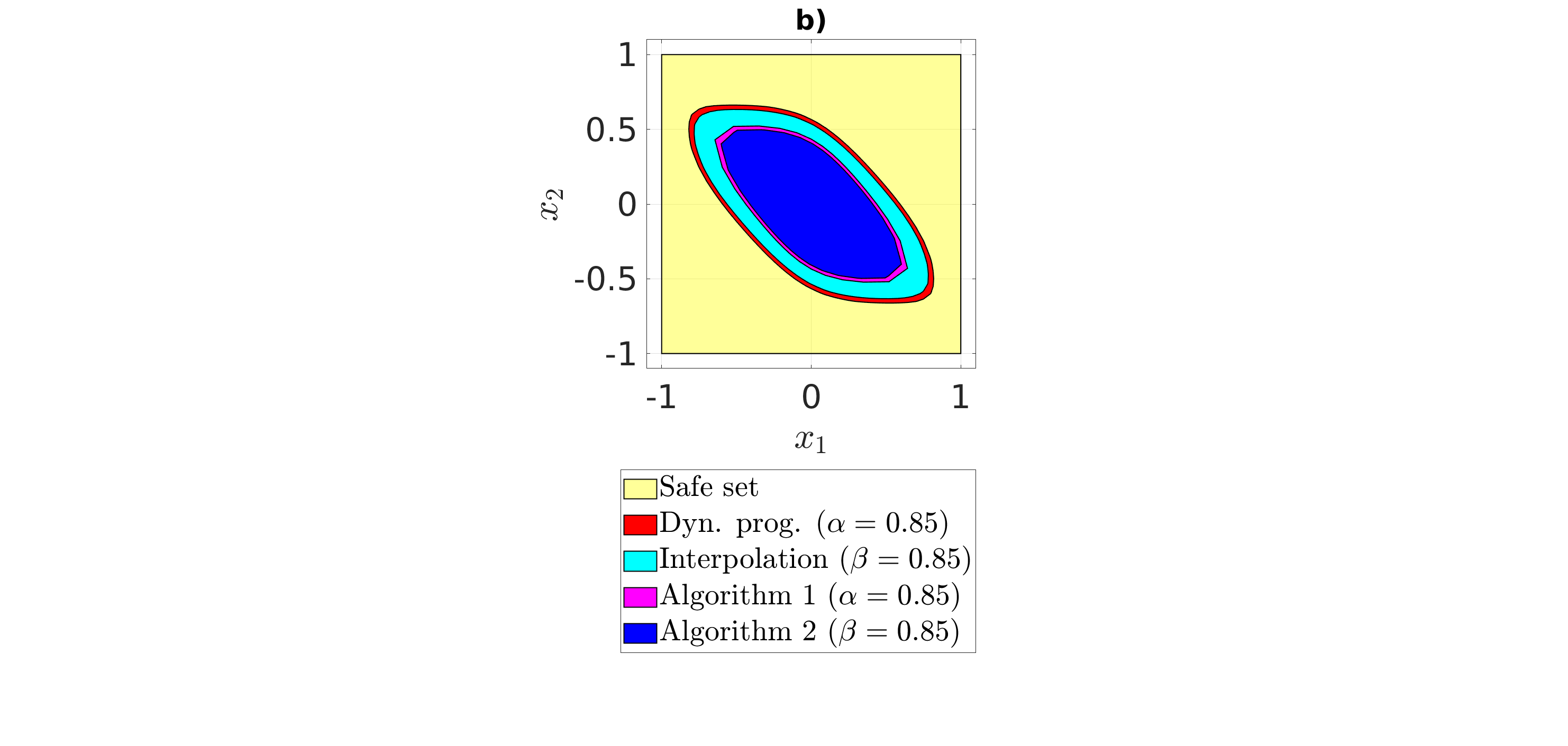}
    \includegraphics[Trim=\trimValuesIntg, clip, width=0.24\linewidth]{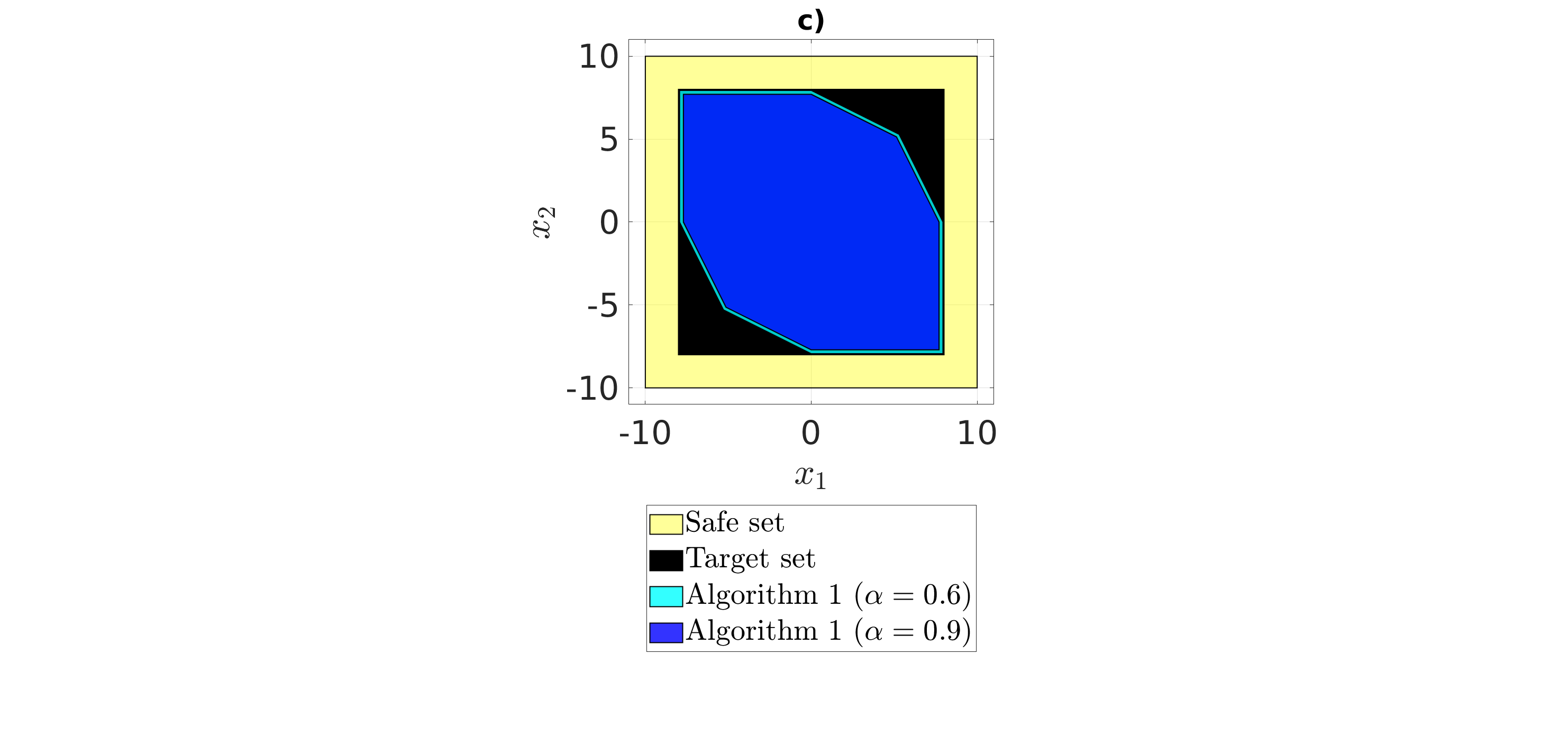}
    \includegraphics[Trim=\trimValuesIntg, clip, width=0.24\linewidth]{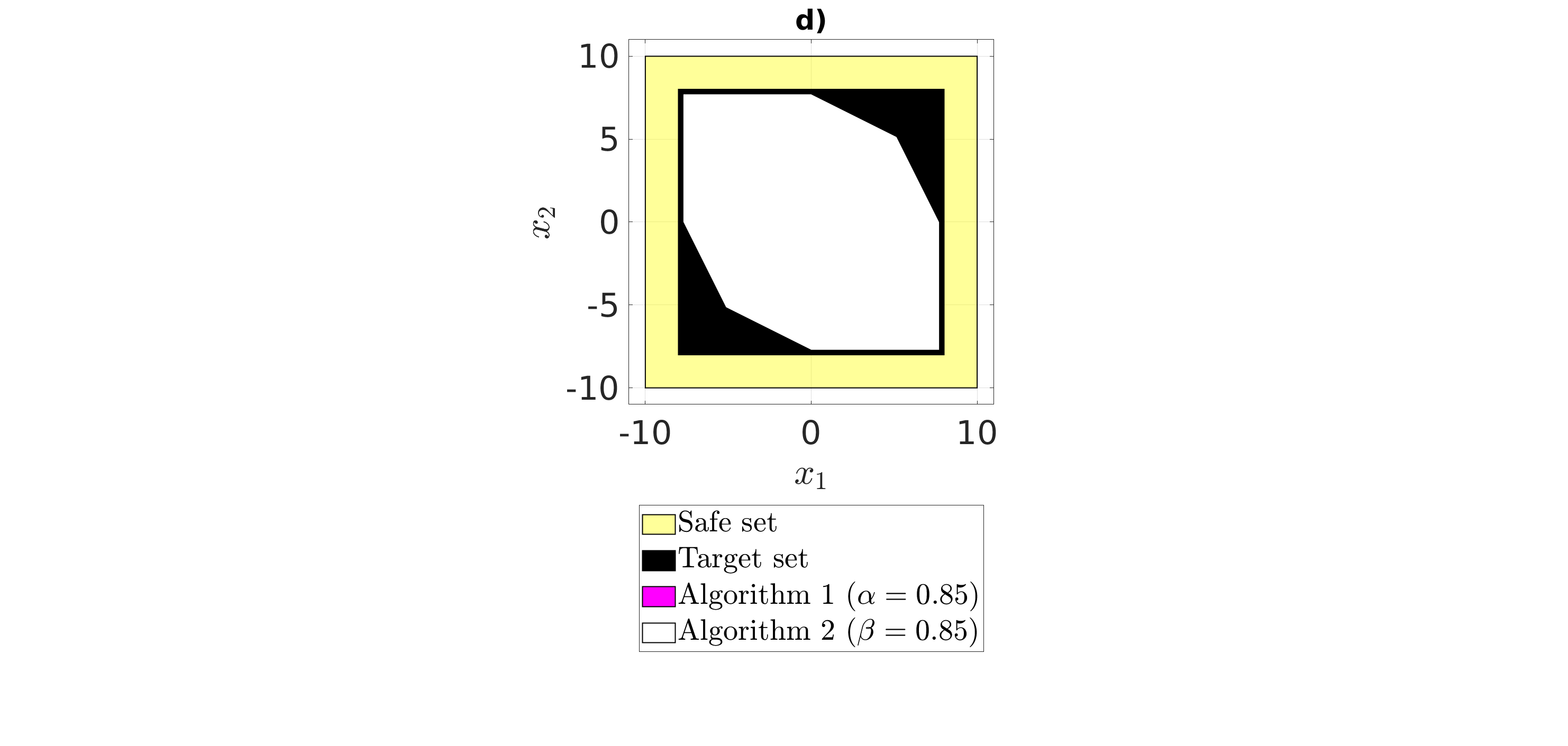}\\
    \caption{Stochastic reachable sets for a chain of integrators. 
        Figures a) and
        c) show the stochastic reach sets and their
        underapproximations for $\alpha\in\{0.6,0.9\}$ for $n=2$ and $n=40$,
        respectively. Figures b) and d) show the underapproximative
        interpolations for $n=2$ and $n=40$, respectively. For $n=40$, the plots
        show the initial state $ \overline{x}_0 =[x_1\ x_2\ 0\ \cdots\
    0]^\top$.  We choose $ \overline{x}_c$ \eqref{prob:xmax_compute_cheby} for $
\overline{x}_\mathrm{anchor}$.}\label{fig:intg} 
\end{figure*}

\begin{figure}
    \centering
    \includegraphics[width=0.47\linewidth]{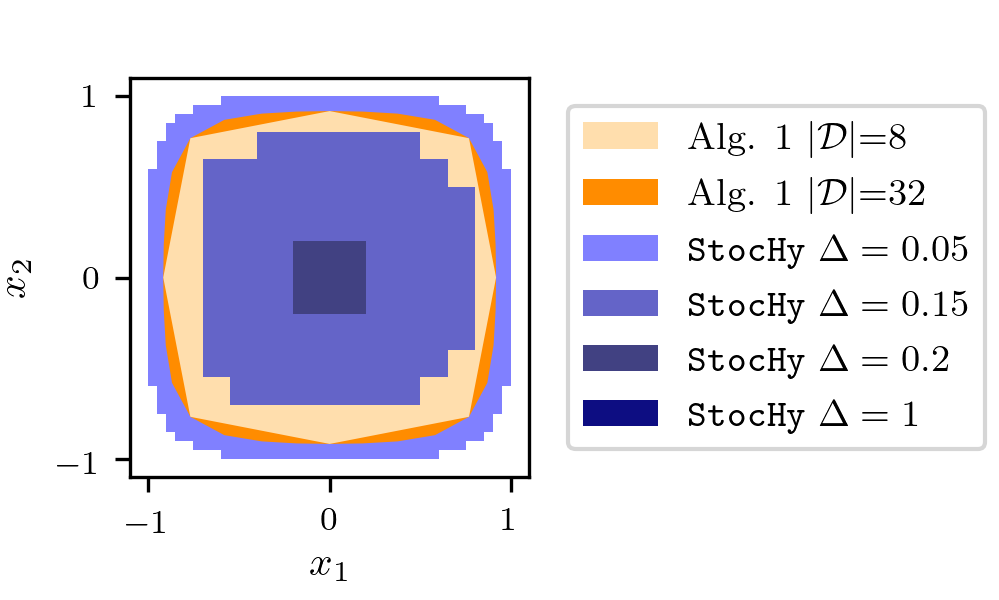}\ 
    \includegraphics[width=0.42\linewidth]{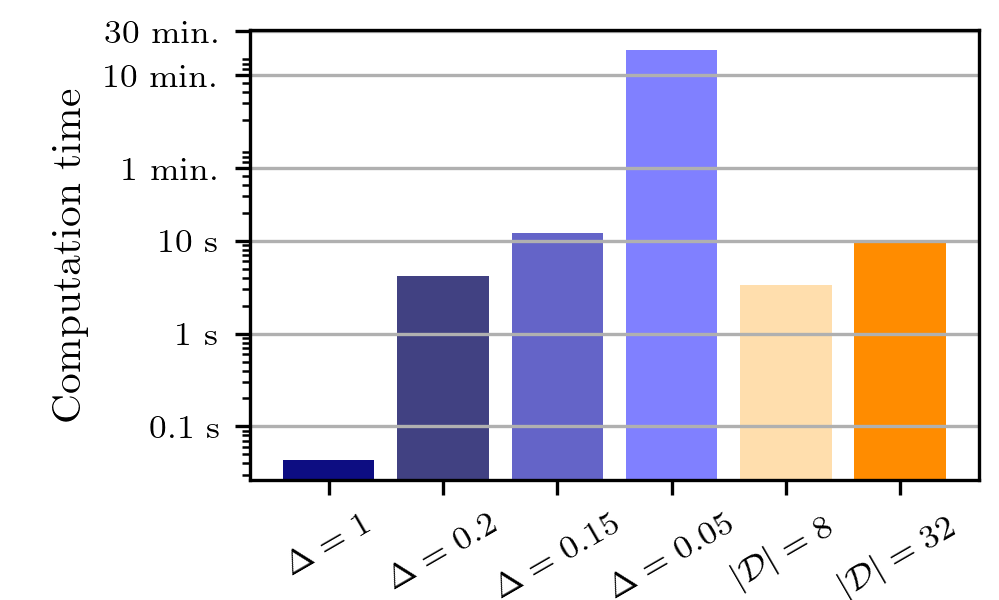} 
    \caption{Stochastic reach sets for $\alpha=0.6$ and
        $n=2$.  (Top) 
        Algorithm~\ref{algo:poly} produced a larger underapproximation than \texttt{StocHy}
        for $\Delta \in \{1, 0.2, 0.15\}$, but a smaller underapproximation 
        for $\Delta = 0.05$. 
        \texttt{StocHy}
    returned an empty set for $\Delta=1$. (Bottom)
As expected, computational effort of
Algorithm~\ref{algo:poly} and \texttt{StocHy} increases with higher fidelity.}
\label{fig:stochy_reach_sets_2D}
\end{figure}

\begin{figure*}
    \centering
    \newcommand{\trimValuesComputeTime}{0 15 0 100}
    \includegraphics[Trim=\trimValuesComputeTime, clip, width=0.45\linewidth]{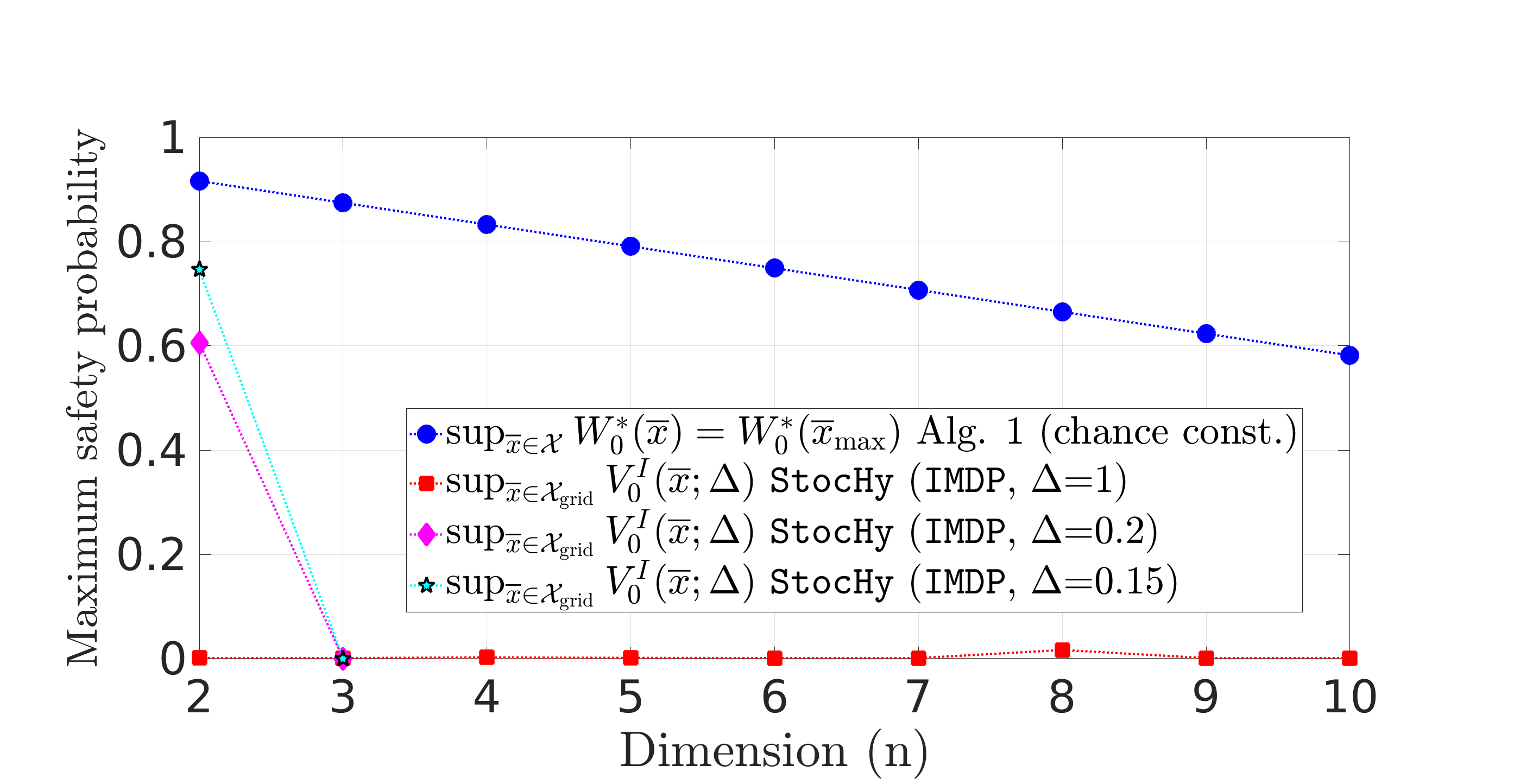}\hskip1em
    \includegraphics[Trim=\trimValuesComputeTime, clip, width=0.45\linewidth]{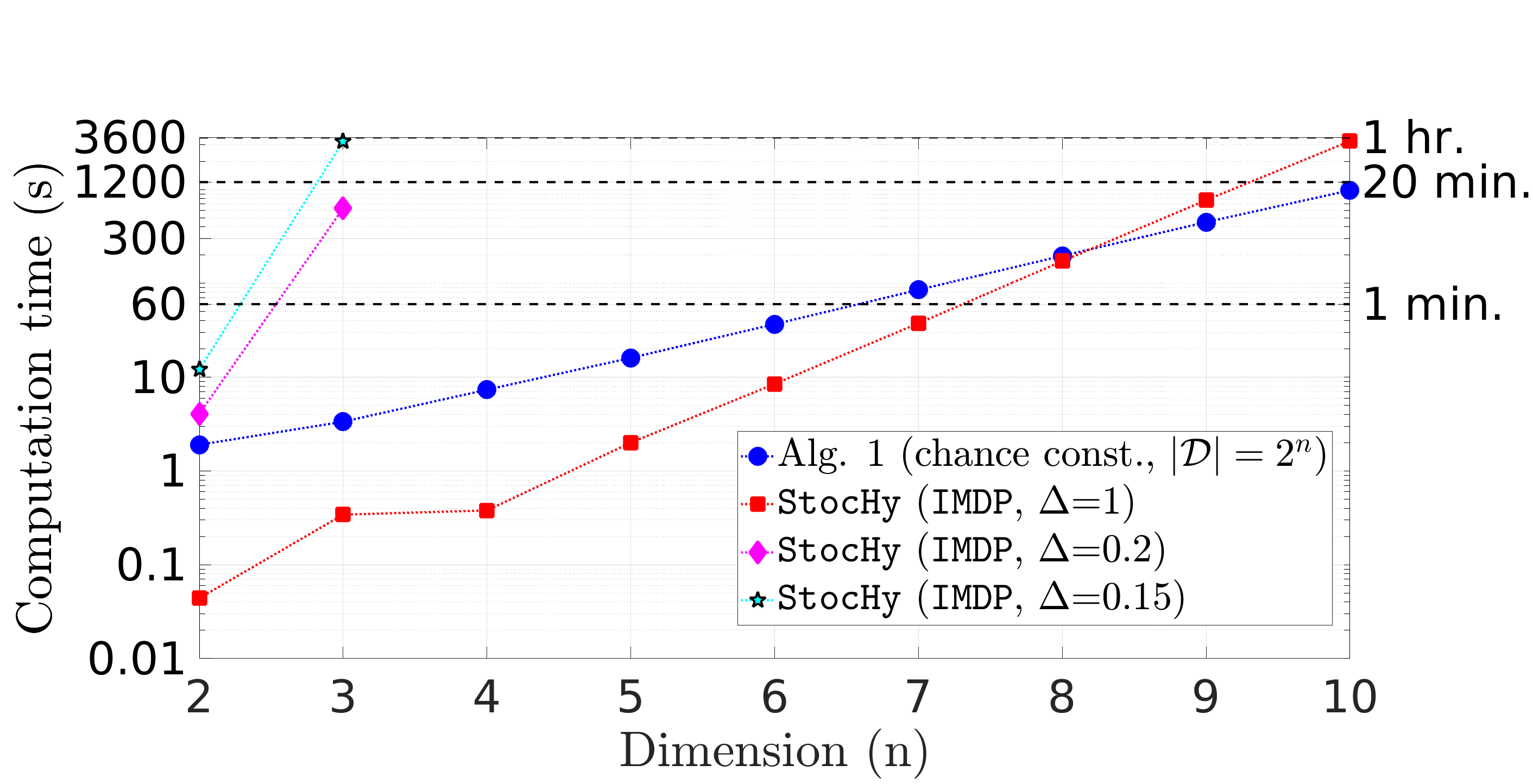}
    \caption{Comparison of Algorithm~\ref{algo:poly} with
        \texttt{StocHy} for various $n$. 
        (Left) Algorithm~\ref{algo:poly} generates higher maximal safety probabilities than \texttt{StocHy};
        for $n\geq 3$, \texttt{StocHy} returns trivial underapproximations, despite reductions in 
        grid step size. (Right) Computation of
        Algorithm~\ref{algo:poly}
        with $|\mathcal D| = 2^n$ scales significantly
        better than \texttt{StocHy} for fine grids,
        and comparably for coarse grids.}
\label{fig:SReachToolsVsStocHy}
\end{figure*}

\begin{figure*}
    \centering
    \newcommand{\trimValuesComputeTime}{0 10 0 30}
    \includegraphics[Trim=\trimValuesComputeTime, clip,
    width=0.45\linewidth]{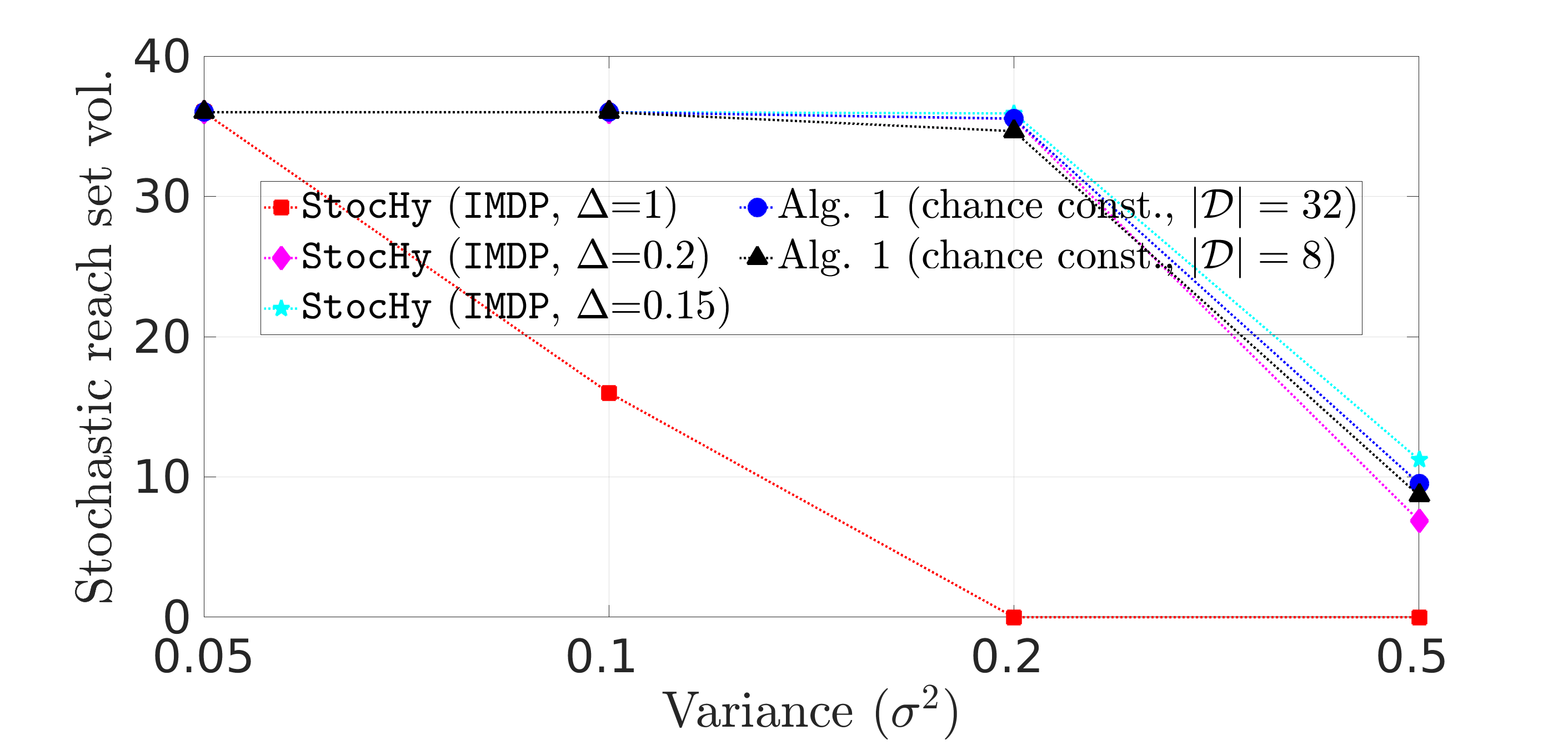}\hskip1em 
    \includegraphics[Trim=\trimValuesComputeTime, clip,
    width=0.45\linewidth]{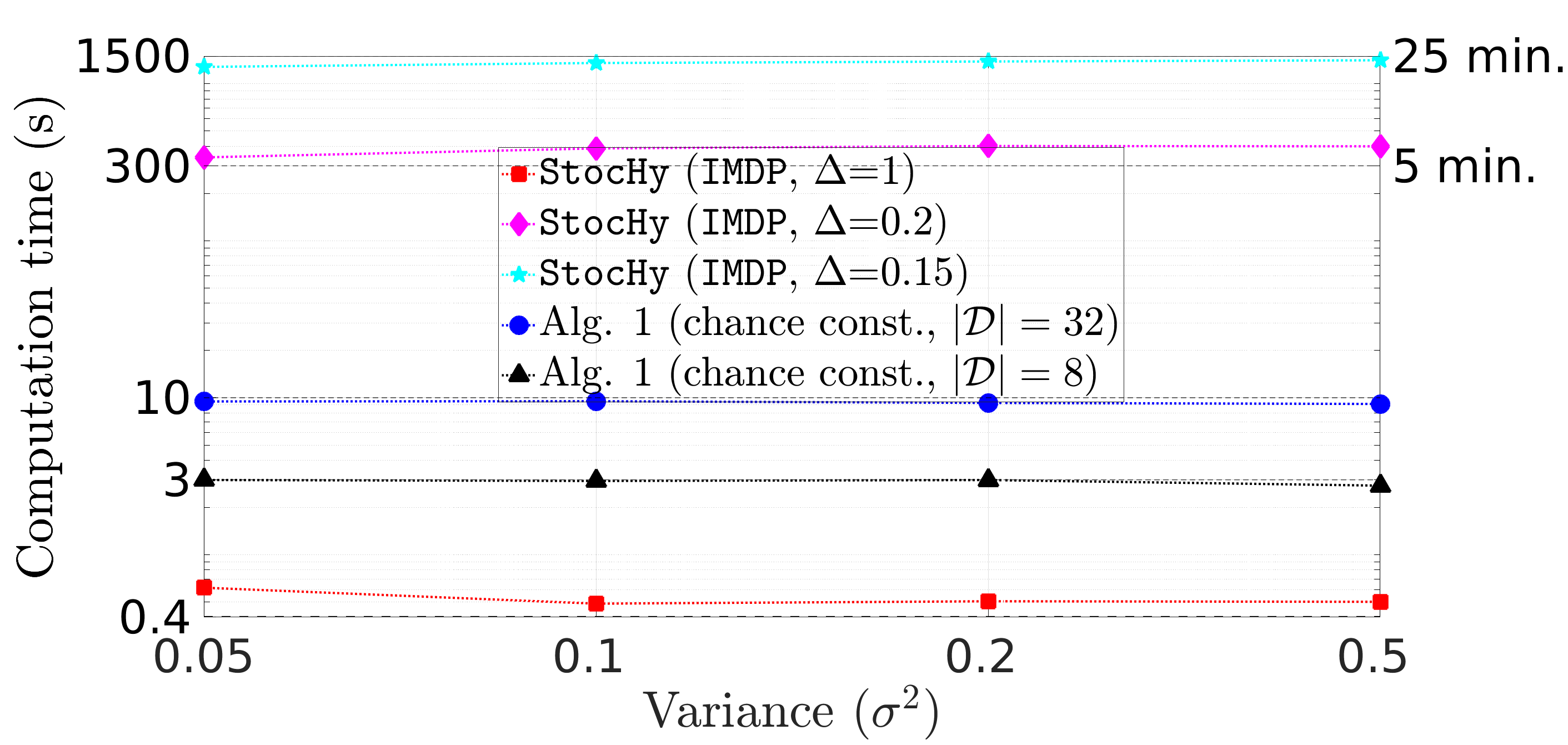}
    \caption{Comparison of Algorithm~\ref{algo:poly} with
        \texttt{StocHy} for various $\sigma^2$ and $n=2$. 
        (Left) Algorithm~\ref{algo:poly} exhibits a lower degree of
        conservativeness (larger volume) than
        \texttt{StocHy}, except for a fine grid
        $\Delta=0.15$.
        (Right) The computational effort required by
    Algorithm~\ref{algo:poly}  is two orders of magnitude ($3-10$ seconds as compared to $300-1500$ seconds) lower than
\texttt{StocHy} for $\Delta\in\{0.15, 0.2\}$. 
}
\label{fig:SReachToolsVsStocHy_various_var}
\end{figure*}

\subsection{Comparison of Algorithm~\ref{algo:poly} and \texttt{StocHy}}
\label{sub:stochy}

\texttt{StocHy} solves stochastic viability or reach-avoid problems
for stochastic hybrid systems~\cite{cauchi2019stochy,cauchi2019efficiency} via
abstraction and dynamic programming. \texttt{StocHy} computes
look-up
tables $V^I_0(x;\Delta):\mathcal{X}_\mathrm{grid}\to [0,1]$
defined over a grid $\mathcal{X}_\mathrm{grid}\subset
\mathcal{X}$ with step size $\Delta > 0$.  We focus on the
\texttt{IMDP} implementation of StocHy, since its solution
$V^I_0(x;\Delta)$ lower bounds %
the true safety probability \eqref{eq:ProbHatR}.  
The alternative implementation,
\texttt{FAUST$^2$} \cite{soudjani2015faust} scales more
poorly and is slower than
\texttt{IMDP}~\cite{cauchi2019efficiency}. 
Table~\ref{tab:differences} summarizes the
differences between \texttt{SReachTools} (which implements
Algorithm~\ref{algo:poly}) and \texttt{StocHy}.
Because
\texttt{IMDP} cannot accommodate continuous control input
and time-varying target sets,
we consider the uncontrolled system
\begin{align}
    \bx_{k+1}&= 0.8\bx_k + \bw_k\label{eq:LTI_stochy}
\end{align}
with state $\bx_k \in
\mathbb{R}^n$ and disturbance $\bw_k\sim \mathcal{N}(0,
\sigma^2 I_n)$, and compute the $0.6$-stochastic viability
set for $N=10$ and $\mathcal{T}_k=\mathcal{S}$. 

\newcommand{\propertymultirow}[1]{
\begin{minipage}{3cm}
            \centering
            \vspace*{0.2em}#1
    \end{minipage}
}
\newcommand{\toolmultirow}[1]{
\begin{minipage}{2.75cm}
            \centering
            \vspace*{0.2em}#1
    \end{minipage}
}
\begin{table}
    \centering
    \caption{Capabilities of \texttt{SReachTools}
   and \texttt{StocHy}.}
    \begin{tabular}{|c||c|c|} 
        \hline									 
        Property & \texttt{SReachTools} & \texttt{StocHy} \\
        \hline\hline

        System model & Linear time-varying & \toolmultirow{Hybrid time-
        invariant}  \\[1.5ex]\hline

        Specification & \toolmultirow{Reachability of \\
        target tube \eqref{prob:MP}} &
        \toolmultirow{Viability or reach-avoid}\\[1.5ex]
        \hline

        \propertymultirow{Underapproximative \\
        verification} & \checkmark &
        \checkmark \\[1.5ex] \hline

        \propertymultirow{Underapproximative \\controller
        synthesis }  & \checkmark & \\[1.5ex] \hline 
        
        Controller type  & 
        \toolmultirow{Open-loop or affine feedback} &
        \toolmultirow{Grid based\\ state feedback} \\[1.5ex] \hline 

   \end{tabular}
   \label{tab:differences}
\end{table}

\emph{Advantage of grid-free approach}: To compare the
stochastic reach sets produced by Algorithm~\ref{algo:poly}
(grid-free)
to the sets produced by \texttt{StocHy} (grid-based),
we chose
$\sigma^2=0.05$, $n=2$, $
\mathcal{S}=[-1,1]^2$, and varied the
grid step size $\Delta\in\{0.05,0.15,0.2, 1\}$. Figure
\ref{fig:stochy_reach_sets_2D} shows that the sets generated
by Algorithm~\ref{algo:poly} are less conservative (larger
volume) than \texttt{StocHy}, except when a very fine grid
($\Delta = 0.05$) is used. However, \texttt{StocHy} was two
orders of magnitude slower than Algorithm~\ref{algo:poly}
for $\Delta=0.05$. 

\emph{Scalability evaluation}: We
compared the maximal safety probabilities,
$\sup_{\overline{x}\in
    \mathcal{X}} W_0^\ast( \overline{x})$ via Algorithm~\ref{algo:poly},
    and $\sup_{\overline{x}\in \mathcal{X}_\mathrm{grid}}
    V_0^I( \overline{x};\Delta)$ via \texttt{StocHy}, with 
$\sigma^2=0.05$ and $
\mathcal{S}=[-1,1]^n$.
Figure \ref{fig:SReachToolsVsStocHy} (left) shows that 
\texttt{StocHy} is more conservative, as it consistently computes a lower
maximal safety probability than Algorithm~\ref{algo:poly}, 
despite reductions in grid size ($\Delta\in\{0.15,0.2\}$).
Indeed, \texttt{StocHy} generated trivial
underapproximations for $n \geq 3$, and for $n\geq
4$, the computational cost for $\Delta\in\{0.15,0.2\}$ was
prohibitive 
(Figure~\ref{fig:SReachToolsVsStocHy} (right)).
Computationally, Algorithm~\ref{algo:poly} scales comparably to
\texttt{StocHy} for a coarse grid ($\Delta=1$), and
significantly better than \texttt{StocHy} for finer grids
($\Delta\in\{0.15,0.2\}$). We used chance constraint approach with $\Npoly=2^n$ and $\xmax$ as
$ \overline{x}_c$ in Algorithm~\ref{algo:poly}.

\emph{Effect of disturbance variance}:
We evaluate the volume of the stochastic reach set 
for 
$\sigma^2\in\{0.05, 0.1, 0.2,
0.5\}$ for $n=2$ and $ \mathcal{S}=[-3,3]^2$, 
via Algorithm~\ref{algo:poly} and \texttt{StocHy}, 
in Figure~\ref{fig:SReachToolsVsStocHy_various_var}. 
As expected, the reach set volume decreases as the
disturbance variance increases.
Algorithm~\ref{algo:poly} produces stochastic reach sets of
similar volume as that of
\texttt{StocHy} at finer grids $\Delta\in\{0.15,0.2\}$,
in significantly lower computation time.
\texttt{StocHy} is faster than Algorithm~\ref{algo:poly} for $\Delta=1$, but is significantly more conservative. %

In summary, we observe empirically that 
Algorithm~\ref{algo:poly} tends to be less
conservative and computationally faster than
\texttt{StocHy}. 

\subsection{Spacecraft rendezvous}
\label{sub:CWH}

We consider two spacecraft in the same elliptical orbit.
One spacecraft, referred to as the deputy, must approach and
dock with another spacecraft, referred to as the chief,
while remaining in a line-of-sight cone, in which accurate
sensing of the other vehicle is possible.  The relative
dynamics are described by the Clohessy-Wiltshire-Hill (CWH)
equations \cite{wiesel1989_spaceflight} with an additive
stochastic noise to account for model uncertainties,
\begin{align}
    \ddot{x} - 3 \omega x - 2 \omega \dot{y} = m_{d}^{-1}F_{x},\quad\ddot{y} + 2 \omega \dot{x} = m_{d}^{-1}F_{y}.
  \label{eq:2d-cwh}
\end{align}
The chief is located at the origin, the position of the deputy is $x,y \in
\mathbb{R}$, $\omega = \sqrt{\Gamma/R_{0}^{3}}$ is the orbital frequency,
$\Gamma$ is the standard gravitational parameter, and
$R_{0}$ is the orbital radius of the spacecraft.
See~\cite{LesserCDC2013, GleasonCDC2017} for further
details.

We define the state as $z = [x,y,\dot{x},\dot{y}] \in \mathbb{R}^{4}$ and input
as $u = [F_{x},F_{y}] \in \mathcal{U} =
[-u_M,u_M]^2\subseteq\mathbb{R}^{2}$ for some $u_M > 0$. We discretize the dynamics (\ref{eq:2d-cwh})
in time to obtain %
  $$\overline{z}_{k+1} = A \overline{z}_{k} + B \overline{u}_{k} +
  \bw_{k},$$
with a Gaussian %
disturbance $w_{k} \in \mathbb{R}^{4}$ with zero mean, and
covariance $10^{-4}\times\mbox{diag}(1, 1, 5 \times 10^{-4},
5 \times 10^{-4})$.  For a time horizon of $N = 5$, we
define the target tube
\begin{align}
    \mathcal{T}_5 &= \left\{ \overline{z}\in \mathbb{R}^4: |z_{1}| \leq 0.1, -0.1 \leq z_{2} \leq 0, |z_{3}| \leq 0.01, |z_{4}| \leq 0.01 \right\},\mbox{ and } \nonumber \\
    \mathcal{T}_k &= \left\{ \overline{z}\in \mathbb{R}^4: 2 \leq |z_{1}| \leq -z_{2},
    |z_{3}|\leq 0.5, |z_{4}| \leq 0.5 \right\}\nonumber
\end{align}
for $k\in \mathbb{N}_{[0,4]}$.
We want to solve the stochastic reach-avoid problem for
$\alpha=0.8$ with initial velocity
$\dot{x}=\dot{y}=v_\mathrm{const}$ km/s for two scenarios: P1) 
$v_\mathrm{const}=0$ and $u_M = 0.1$, and P2)
$v_\mathrm{const}=0.01$ and $u_M = 0.01$.

We construct underapproximative stochastic reach sets using
Algorithm~\ref{algo:poly} with convex chance constraint and
\ft{} approaches. We compare these results with the
grid-based approach proposed in~\cite{LesserCDC2013}.
In~\cite{LesserCDC2013}, the problem 
$\sup_{ \overline{U} \in \mathcal{U}^N} W_0( \overline{x},
\overline{U})$ is solved in a chance-constrained formulation
using \texttt{MATLAB}'s \texttt{fmincon} for $
\overline{x}\in \mathcal{X}_\mathrm{grid}$ (over the visible
area) with grid step-size $\Delta=0.01$. 
Figure~\ref{fig:CWH} shows the
sets, and Table~\ref{tab:CWHandDubinsTime} shows the computation
time, and the statistics of the approximation error of the
open-loop maximal reach probability using Monte Carlo
simulations with $10^5$ scenarios at the vertices of
$\uFSRAset$. Due to the dimensionality, this problem is
intractable via dynamic programming. This problem is also
hard for abstraction-based techniques like
\texttt{StocHy}~\cite{cauchi2019stochy}, since $
\mathcal{T}_k$ are not axis-aligned hypercubes.  

\begin{figure}
    \centering
    \newcommand{\trimValuesCWHExampleA}{20 90 110 130}
    \includegraphics[height=0.15\textheight, Trim=\trimValuesCWHExampleA,
    clip]{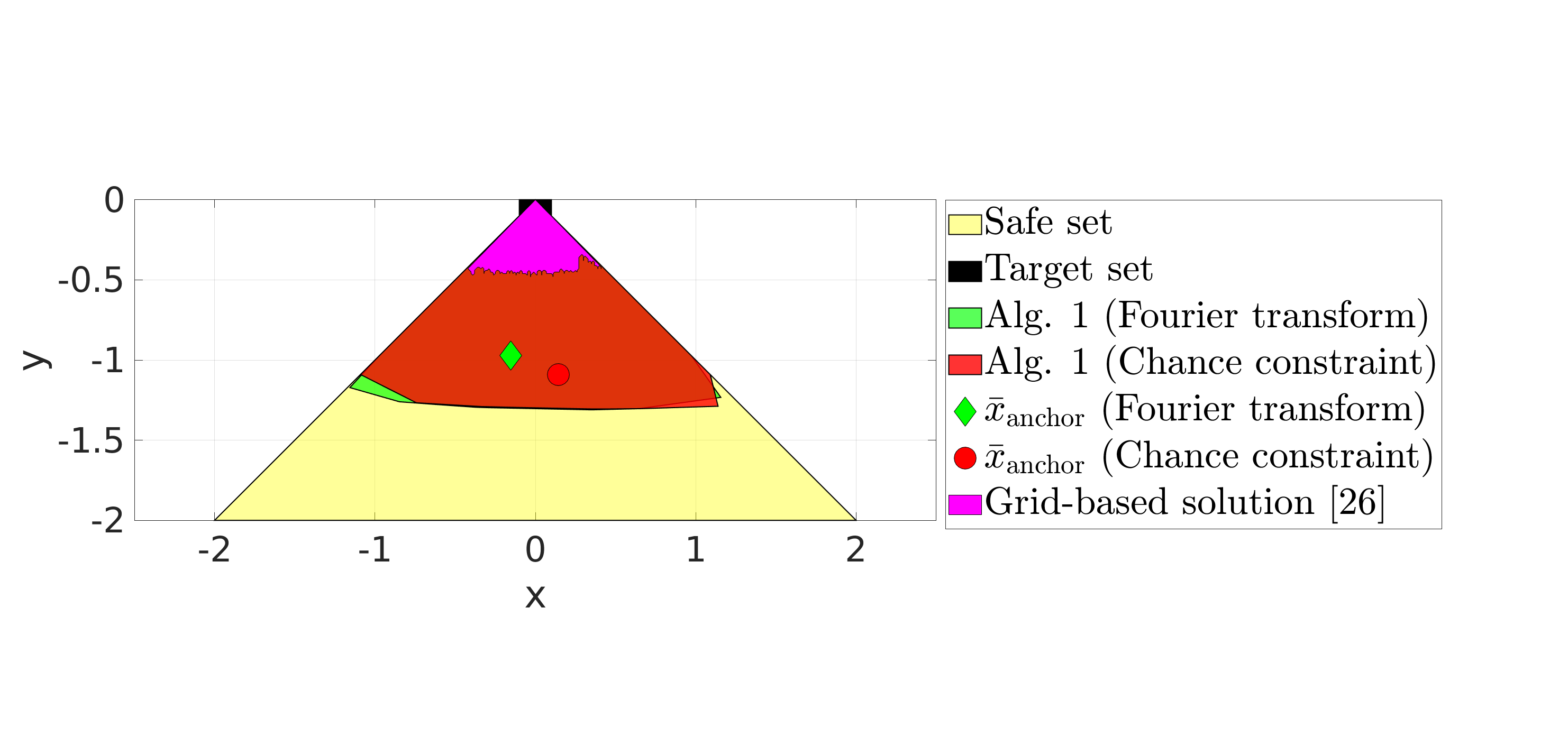}
    \newcommand{\trimValuesCWHExampleB}{160 10 730 30}
    \includegraphics[height=0.18\textheight, Trim=\trimValuesCWHExampleB,
    clip]{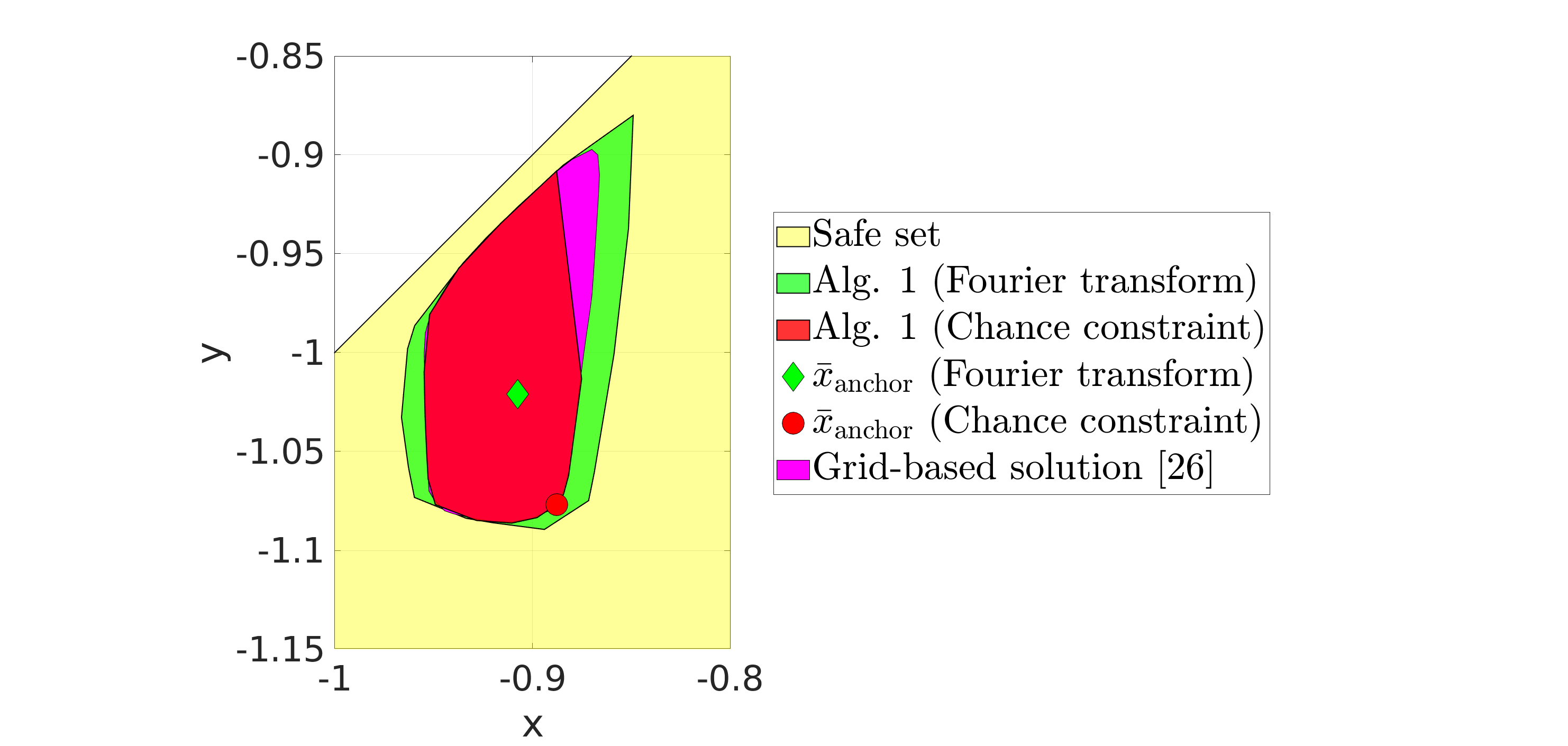}
    \caption{Spacecraft rendezvous and docking problem.
        Comparison of Algorithm~\ref{algo:poly} using chance constraints
        and \ft{} approaches and the grid-based
        solution~\cite{LesserCDC2013}.  Initial velocity
        is zero for case P1 (top), and non-zero for case P2
        (bottom).  We choose $\Npoly=32$ and $
    \overline{x}_c$ \eqref{prob:xmax_compute_cheby} for $
\overline{x}_\mathrm{anchor}$.}\label{fig:CWH} 
\end{figure}

\begin{table}
   \caption{Computation time (seconds) and
       approximation error in
       $W_0^\ast(\xanchor+\theta_i^\ast \overline{d}_i)$ for
       each $i\in \mathcal{D}$.  The approximation 
       error is the difference of the open-loop reach
       probability estimated using Monte Carlo simulation
       ($10^5$ particles) and the optimal solution of
       \eqref{prob:gamma_ext_W} at each direction vector
       $\overline{d}_i$.  
   Only the visible area in \cite{LesserCDC2013} is gridded for P2, 
   to avoid time-out ($<3$ hours).}
   \centering
    \begin{tabular}{|c||c|c|c|c|c|c|c|}
        \hline
        \multirow{3}{*}{Fig.}& \multicolumn{3}{c|}{Chance-constraint} &
        \multicolumn{3}{c|}{\ft{}} & ~\cite{LesserCDC2013} \\ \cline{2-8}
        & Time & \multicolumn{2}{c|}{Error at vertex} & Time  &
        \multicolumn{2}{c|}{Error at vertex} & Time \\\cline{3-4}\cline{6-7}
                                             & (s)  & mean &
        std. &  (s)    & mean & std. & (s) \\\hline\hline
        \ref{fig:CWH}, top     &  $9.84$  & $0.037$ & $0.028$
                               & $679.93$ & $0.006$ & $0.007$ & $8369.84$\\ \hline %
        \ref{fig:CWH}, bottom  & $10.98 $ & $0.017$ & $0.004$ 
                               & $842.77$ & $0.015$ & $0.006$ & $764.92$\\ \hline
                               \multicolumn{4}{c}{}\\[-2ex]\cline{1-4}
        \ref{fig:Dubins} &  $32.19$ &  $0.128$  & $0.036$ &
        \multicolumn{4}{c}{}\\ \cline{1-4}
    \end{tabular}
   \label{tab:CWHandDubinsTime}
\end{table}

Algorithm~\ref{algo:poly} implemented using convex chance
constraint approach (\texttt{SReachSetCcO} in
\texttt{SReachTools}) is significantly faster (two orders of
magnitude) than the \ft{} approach (\texttt{SReachSetGpO} in
\texttt{SReachTools}), since it relies on a
standard conic solver (like \texttt{MOSEK}~\cite{mosek} or
\texttt{CVX}) instead of a
gradient-free nonlinear optimization solver
(\texttt{MATLAB}'s \texttt{patternsearch}).  On the other
hand, the \ft{} approach typically provides a larger
underapproximative reach set.  This is because it
approximates $W_0(\cdot)$ directly, instead of relying using
Boole's inequality for an underapproximation, and we
initialize \texttt{MATLAB}'s \texttt{patternsearch} with the
solution obtained from the convex chance constraint
implementation. 
Algorithm~\ref{algo:poly} with chance
constraints is two to four orders of magnitude faster than~\cite{LesserCDC2013}, since
it relies on convex solvers and does not require gridding, and is more computationally robust.
We also
used Monte-Carlo simulations from each of the vertices of
the computed polytopes to validate that the safety
prescriptions of Algorithm~\ref{algo:poly} implemented using
convex chance constraints or \ft{} is
underapproximative (approximation error in
Table~\ref{tab:CWHandDubinsTime} has a positive mean).

\subsection{Dubin's vehicle: Linear time-varying system with time-varying target}
\label{sub:Dubins}

We consider the problem of driving a Dubin's vehicle under a
known turning rate sequence while staying within a target
tube.  When the turning rate sequence and the initial
heading are known, the resulting sequence of heading angles
can be constructed \emph{a priori}.  The corresponding
linear time-varying vehicle dynamics are, 
\begin{align}
    \bx_{k+1}&=\bx_{k} + %
    \left[\begin{array}{c}
            T_s\cos(\varphi_0+ T_s\sum_{j=0}^{k-1} \omega_j) \\         
            T_s\sin(\varphi_0+ T_s\sum_{j=0}^{k-1} \omega_j) \\         
\end{array}\right] u_k +\bfeta_k\label{eq:LTV_dubins}
\end{align}
with position $\bx_k\in \mathbb{R}^2$, heading velocity $u_k\in [0, u_{M}]$,
sampling time $T_s$, known initial heading $\varphi_0$, time
horizon $N$, known sequence of turning rates
$\{\omega_k\}_{k=0}^{N-1}$, and a Gaussian random process
$\bfeta_k\sim \mathcal{N}(\overline{\mu}_{\bfeta}, \Sigma_{\bfeta})$.
For some fraction $\delta\in (0,1]$, let ${\{ \overline{c}_k\}}_{k=0}^{N-1}$ be
the resulting \emph{nominal} trajectory of \eqref{eq:LTV_dubins}
associated with a fixed heading velocity $ u_k = u_M\delta,\ \forall k\in
\mathbb{N}_{[0,N]}$.

We choose the parameters $N=50$, $T_s=0.1$, $\omega_k =
0.2\pi\ \forall k\in \mathbb{N}_{[0,N-1]}$, $\varphi_0 =
0.1\pi$, $u_M = 10$, $\delta=0.7$, $
\overline{\mu}_{\bfeta} = \overline{0}_2$, and
$\Sigma_{\bfeta} = 10^{-3} I_2$. 
We wish to compute the
$0.8$-level stochastic reach set of the target tube $
\mathcal{T}_k = \mathrm{Box}\left(\overline{c}_k, 4
\exp\left({\frac{-k}{N_c}}\right)\right),\ \forall k\in
\mathbb{N}_{[0,N]}$ where $N_c = 100$ is the decay time
constant and $ \mathrm{Box}( \overline{c}_k, a)\subset
\mathbb{R}^2$ is an axis-aligned box centered around $
\overline{c}_k$ with side $2a,\ a>0$. 
\texttt{StocHy}~\cite{cauchi2019stochy} does not
support time-varying dynamics. 

Figure~\ref{fig:Dubins} shows the underapproximative open-loop stochastic
reach set computed by Algorithm~\ref{algo:poly} (convex chance constrained
approach). The computed open-loop controller contains the spread of the simulated trajectories within the target tube, as shown for one of the vertices of $\uFSRAset$. Due to the long time horizon and the large state space, grid-based dynamic programming
is infeasible. Table~\ref{tab:CWHandDubinsTime} shows that
the conservatism (approximation error has a positive and
large mean) in the computed lower bounds at the vertices,
due to Boole's inequality.

\begin{figure}
    \centering
    \newcommand{\trimvaluesDubins}{225 10 270 40}
    \includegraphics[width=0.45\linewidth,Trim=\trimvaluesDubins,clip]{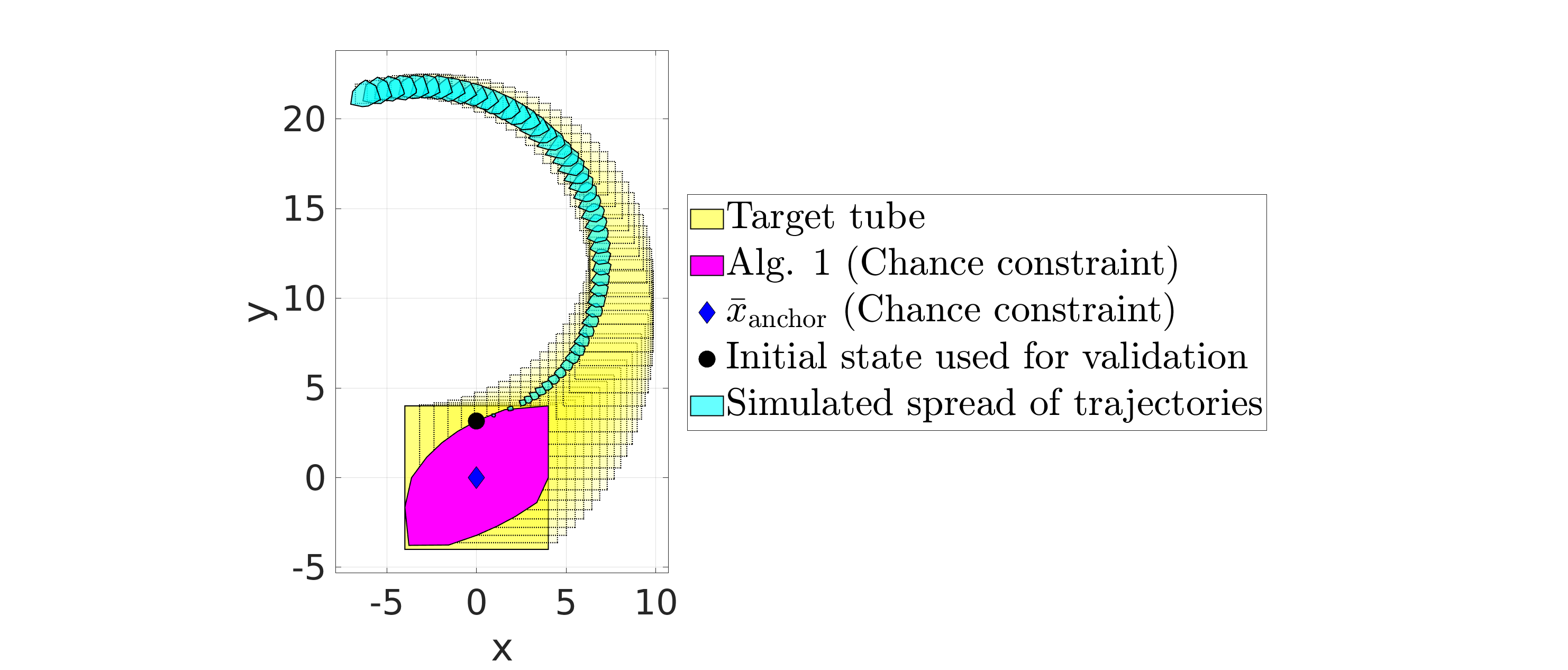}
    \caption{
        Stochastic reach set of a Dubin's vehicle with a target tube. The cyan
        polytopes show the spread of the trajectories ($10^5$ scenarios) in
        Monte Carlo simulation, under the open-loop
    controller from Algorithm~\ref{algo:poly}. We choose
    $\Npoly=16$ and $ \overline{x}_c$
\eqref{prob:xmax_compute_cheby} for $
\overline{x}_\mathrm{anchor}$.}
    \label{fig:Dubins}
\end{figure}

\section{Conclusions}
\label{sec:conc}

In this paper, we have characterized the properties of the
stochastic reachability problem of a target tube, that
guarantee existence and closed, compact, and convex
stochastic reach set.  By establishing convexity properties,
we constructed an underapproximation of the stochastic reach
set via interpolation.  Finally, we proposed a scalable,
grid-free, and anytime algorithm that provides an open-loop
controller-based polytopic underapproximation of the
stochastic reach set.

\section*{Acknowledgements}

We thank Dr. Nathalie Cauchi for her help in
setting up the comparison 
 with \texttt{StocHy}.

\newcommand{\myqed}{\hspace*{\fill}{$\blacksquare$}}
\makeatletter
\appendix
\section{Proofs for
Propositions~\ref{prop:contains},~\ref{prop:Borel},~\ref{prop:usc},
and~\ref{prop:W_prop}}
\label{app:proof}
\makeatother

\subsection{Proof of Proposition~\ref{prop:contains}}
\label{app:proof_contains}
    For any $k\in \mathbb{N}_{[0,N]}$ and $ \overline{x}\in \SRAsetk$, $V_k^\ast( \overline{x})\geq \alpha$.
    By \eqref{eq:boundedV}, we have $1_{\mathcal{T}_k}( \overline{x})\geq
    V_k^\ast( \overline{x})\geq \alpha>0 \Rightarrow \overline{x}\in
    \mathcal{T}_k$, which implies $\SRAsetk\subseteq \mathcal{T}_k$.
    The boundedness of $\SRAsetk$ follows by definition~\cite[Defn.    12.5.3]{TaoAnalysisII}.     \myqed

\subsection{Proof of Proposition~\ref{prop:Borel}}
\label{app:proof_Borel}

    \emph{a)} (By induction)
    By definition, the indicator functions
    $1_{\mathcal{T}_{k}}(\cdot)$ for all $k\in
    \mathbb{N}_{[0,N]}$ are Borel-measurable since
    $\mathcal{T}_{k}$ are Borel sets. Consequently,
    $V_N^\ast(\cdot)$ is Borel-measurable by \eqref{eq:VN}. 

    Consider the base case $k=N-1$. Since
    $V_{N}^\ast(\cdot)$ is Borel-measurable (by above) and
    bounded (by \eqref{eq:boundedV}) and the stochastic
    kernel is input-continuous, the function
    $\int_{\mathcal{X}}
    V_{N}^\ast(\overline{y})Q_{N-1}(d\overline{y}\vert\overline{x},\overline{u})$
    is continuous over $\mathcal{U}$ for each $
    \overline{x}\in\mathcal{X}$ by
    Definition~\ref{defn:cts_stoch}\ref{defn:cts_stoch_input}.  Since continuity
    implies upper
    semi-continuity~\cite[Lem.  7.13 (b)]{BertsekasSOC1978}
    and $ \mathcal{U}$ is compact, an optimal
    Borel-measurable input map $\mu_{N-1}^\ast(\cdot)$
    exists and $\int_{ \mathcal{X}}
    V_{N}^\ast(\overline{y})Q_{N-1}(d\overline{y}\vert\overline{x},\mu_{N-1}^\ast(\overline{x}))$
    is Borel-measurable  over $ \mathcal{X}$ by the
    measurable selection theorem~\cite[Thm. 2]{HimmelbergMOR1976}. 
    Finally, $V^\ast_{N-1}(\cdot)$ is Borel-measurable since the product operator
    preserves Borel-measurability~\cite[Cor. 18.5.7]{TaoAnalysisII}.

    For the case $k\in \mathbb{N}_{[0,N-2]}$,
    assume for induction that $V_{k+1}^\ast(\cdot)$ is
    Borel-measurable.  By the same arguments as above, a
    Borel-measurable $\mu_k^\ast(\cdot)$ exists and
    $V_{k}^\ast(\cdot)$ is Borel-measurable. 
    
    For every $k\in \mathbb{N}_{[0,N]}$, the set $\SRAsetk$
    is well-defined for $\alpha\in[0,1]$, since
    $V_k^\ast(\cdot)$ is well-defined. 

    \emph{b)} Since continuous stochastic kernels are
    input-continuous, the results of
    Proposition~\ref{prop:Borel}\ref{prop:Borel_exist}
    hold. 
    We know that $V_k^\ast(\cdot)$ is zero (and thereby,
    continuous) in the complement of
    $\mathcal{T}_k$ from \eqref{eq:VN} and
    \eqref{eq:Vt_recursionQ}. Also, \eqref{eq:VN} shows that
    $V_N(\cdot)$ is one (and thereby,
    continuous) in the relative interior of
    $\mathcal{T}_N$. We will show that
    $\int_{ \mathcal{X}}
    V_{k+1}^\ast(\overline{y})Q_k(d\overline{y}\vert\overline{x},\mu_k^\ast(
    \overline{x}))$ is continuous over $ \mathcal{X}$ for
    every $k\in \mathbb{N}_{[0,N-1]}$.
    Consequently, $V_k^\ast( \overline{x})$
    is continuous over the relative 
    interior of $\mathcal{T}_k$ for any
    $k\in \mathbb{N}_{[0,N-1]}$ by \eqref{eq:Vt_recursionQ},
    which completes the proof.
    
    The function 
    $\int_{ \mathcal{X}}
    V_{k+1}^\ast(\overline{y})Q_k(d\overline{y}\vert\overline{x},\overline{u})$
    is continuous over $ \mathcal{X}\times \mathcal{U}$ for
    every $k\in \mathbb{N}_{[0,N-1]}$ by
    Definition~\ref{defn:cts_stoch}\ref{defn:cts_stoch_all}, 
    continuous stochastic kernel $Q_k$, 
    and 
    Borel-measurability (Proposition~\ref{prop:Borel}\ref{prop:Borel_exist}) and
    boundedness \eqref{eq:boundedV} of
    $V_{k+1}^\ast(\cdot)$. Using induction and~\cite[Prop.
    7.32]{BertsekasSOC1978}, we conclude the existence of
    $\mu_k^\ast(\cdot)$ for each $k\in
    \mathbb{N}_{[0,N-1]}$, such that $\int_{
    \mathcal{X}}
    V_{k+1}^\ast(\overline{y})Q_k(d\overline{y}\vert\overline{x},\mu_k^\ast(
    \overline{x}))$ is lower and upper semi-continuous
    (thereby, continuous) over $ \mathcal{X}$.\myqed

\subsection{A lemma used in the proof of Proposition~\ref{prop:usc}}

\begin{lem}
    Let the system dynamics $f_k(\cdot),\ \forall k\in \mathbb{N}_{[0,N-1]}$ be
    continuous over $ \mathcal{X}\times \mathcal{U}\times \mathcal{W}$, sets
    $\mathcal{X}$ and $\mathcal{U}$ be closed. For every bounded, non-negative,
    and \usc{} function $h: \mathcal{X} \rightarrow
    \mathbb{R}$ and $k\in \mathbb{N}_{[0,N-1]}$, $\int_{
    \mathcal{X}} h(\overline{y}) Q_k(d\overline{y}\vert \overline{x},
    \overline{u})$ is \usc{} over $
    \mathcal{X}\times \mathcal{U}$.\label{lem:usc_intg}
\end{lem}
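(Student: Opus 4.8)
The plan is to rewrite the integral against the stochastic kernel as an expectation over the disturbance, and then invoke a reverse-Fatou argument that exploits the \usc of $h$ together with the continuity of $f_k$. First I would use the disturbance representation \eqref{eq:Q_defn_gen} of the stochastic kernel to write, for every bounded Borel-measurable $h$ and every $(\overline{x},\overline{u})\in \mathcal{X}\times \mathcal{U}$,
\begin{align}
    \int_{ \mathcal{X}} h(\overline{y})\, Q_k(d\overline{y}\vert \overline{x},\overline{u})
    &=\int_{ \mathcal{W}} h\big(f_k(\overline{x},\overline{u},\overline{w})\big)\, \Prob_{\bw,k}(d\overline{w}).\nonumber
\end{align}
Since $f_k$ is continuous (hence Borel) and $h$ is \usc{} (hence Borel), the integrand is Borel-measurable in $\overline{w}$, so the right-hand side is well-defined, and it is bounded because $h$ is bounded and $\Prob_{\bw,k}$ is a probability measure.

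Next I would establish that the composite map $(\overline{x},\overline{u},\overline{w})\mapsto h(f_k(\overline{x},\overline{u},\overline{w}))$ is \usc{} over $ \mathcal{X}\times \mathcal{U}\times \mathcal{W}$. This follows from the closed-superlevel-set definition of \usc{} used in the preliminaries: for each $\alpha\in \mathbb{R}$, the superlevel set of the composition equals $f_k^{-1}(\{\overline{y}: h(\overline{y})\geq \alpha\})$, which is closed as the continuous preimage of the closed superlevel set of $h$. On the metric spaces $ \mathbb{R}^n$ and $ \mathbb{R}^p$, this is equivalent to the sequential characterization, so for each fixed $\overline{w}$ and any sequence $(\overline{x}_j,\overline{u}_j)\to (\overline{x},\overline{u})$ one has $\limsup_{j} h(f_k(\overline{x}_j,\overline{u}_j,\overline{w}))\leq h(f_k(\overline{x},\overline{u},\overline{w}))$.

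Finally, to conclude \usc{} of the integral in $(\overline{x},\overline{u})$, I would apply the reverse Fatou lemma to the family $\overline{w}\mapsto h(f_k(\overline{x}_j,\overline{u}_j,\overline{w}))$, which is dominated above by the constant bound on $h$ (integrable since $\Prob_{\bw,k}$ is finite):
\begin{align}
    \limsup_{j}\int_{ \mathcal{W}} h\big(f_k(\overline{x}_j,\overline{u}_j,\overline{w})\big)\Prob_{\bw,k}(d\overline{w})
    &\leq \int_{ \mathcal{W}} \limsup_{j} h\big(f_k(\overline{x}_j,\overline{u}_j,\overline{w})\big)\Prob_{\bw,k}(d\overline{w})
    \leq \int_{ \mathcal{W}} h\big(f_k(\overline{x},\overline{u},\overline{w})\big)\Prob_{\bw,k}(d\overline{w}).\nonumber
\end{align}
As the sequence was arbitrary, the integral is \usc{} over $ \mathcal{X}\times \mathcal{U}$ for each $k\in \mathbb{N}_{[0,N-1]}$. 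The main obstacle is the interchange of $\limsup$ and integral: this is precisely the reverse Fatou step, and its hypothesis (domination from above by an integrable function) is exactly what the boundedness of $h$ and the finiteness of $\Prob_{\bw,k}$ supply; the remaining care is in verifying that joint \usc{} restricts to partial \usc{} in $(\overline{x},\overline{u})$ for each fixed $\overline{w}$, which is immediate from the closed-superlevel-set characterization.
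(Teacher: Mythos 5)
Your proof is correct and follows essentially the same route as the paper's: both rewrite the kernel integral as an expectation over the disturbance via \eqref{eq:Q_defn_gen}, observe that $h\circ f_k$ is jointly \usc{} as the composition of a \usc{} function with a continuous one, and interchange $\limsup$ and integral using domination by the bound on $h$. The only cosmetic difference is that you invoke the reverse Fatou lemma directly, whereas the paper derives the same inequality by applying Fatou's lemma to the nonnegative lower semi-continuous function $L-h\circ f_k$ and then using linearity of the integral.
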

\begin{proof}
    By \eqref{eq:Q_defn_gen}, we write $\int_{ \mathcal{X}}
    h(\overline{y}) Q_k(d\overline{y}\vert \overline{x}, \overline{u})$ as the
    Lebesgue integral of $h(f_k(\overline{x}, \overline{u},\overline{w}))$ over
    $ \mathcal{W}$, with respect to the probability measure
    $\Prob_{\bw,k}$~\cite[Sec. 1.23]{RudinReal1987}, 
    \begin{align}
        \int_{ \mathcal{X}} h(\overline{y}) Q_k(d\overline{y}\vert \overline{x},
        \overline{u})=\int_{\mathcal{W}}h(f_k(\overline{x}, \overline{u},
        \overline{w}))d\Prob_{\bw,k}.
    \end{align}
    Note that $h(f_k(\overline{x}, \overline{u}, \overline{w}))$ is \usc{} over
    $ \mathcal{X}\times \mathcal{U}\times \mathcal{W}$, since the composition of
    a \usc{} function with a continuous function is \usc{} ~\cite[Ex. 1.4]{rockafellar_variational_2009}.
    Additionally, $h(f_k(\overline{x}, \overline{u}, \overline{w}))$ is bounded
    and non-negative, since $h(\cdot)$ is bounded and non-negative.
    If $L\in \mathbb{R}$ is an upper bound of $h(\cdot)$,
    then $ L-h(f_k(\overline{x}, \overline{u},
    \overline{w}))$ is non-negative and lower
    semi-continuous over $ \mathcal{X}\times \mathcal{U}$ for every $ \overline{ w}\in \mathcal{W}$.
    By Borel-measurability of $h$, $h(f_k(\overline{x}, \overline{u}, \bw))$ is
    Borel-measurable with respect to $(\mathcal{X}\times
    \mathcal{U}\times \mathcal{W})$.
    From Fatou's lemma~\cite[Sec. 6.2, Thm.
    2.1]{ChowProbability1997} and the fact that $
    L-h(f_k(\overline{x}, \overline{u}, \overline{w}))$ is
    lower semi-continuous, Borel-measurable, and
    non-negative, we have
    \begin{align}
        \liminf_{i}\int_{\mathcal{X}}(L-h(f_k(\overline{x}_i, \overline{u}_i,
        \overline{w})))d\Prob_{\bw,k} &\geq\int_{\mathcal{X}}\liminf_{i}(L-h(f_k(\overline{x}_i, \overline{u}_i, \overline{w})))d\Prob_{\bw,k} \nonumber \\%\psibwk( \overline{w})d\overline{w} \nonumber \\
        &\geq\int_{\mathcal{X}}(L-h(f_k(\overline{x}, \overline{u}, \overline{w})))d\Prob_{\bw,k} \label{eq:h_intg_lsc}  %
    \end{align}
    By linearity properties of the Lebesgue integral on
    \eqref{eq:h_intg_lsc}~\cite[Prop.
    19.2.6c]{TaoAnalysisII}, 
    $\limsup_{i}\int_{\mathcal{X}}h(f_k(\overline{x}_i,
    \overline{u}_i, \overline{w}))d\Prob_{\bw,k}$ is bounded
    from above by $\int_{\mathcal{X}}h(f_k(\overline{x}, \overline{u},
    \overline{w}))d\Prob_{\bw,k}$, proving that $\int_{
    \mathcal{X}} h(\overline{y}) Q_k(d\overline{y}\vert
    \overline{x}, \overline{u})$ is \usc, as desired.
\end{proof}

\subsection{Proof of Proposition~\ref{prop:usc}}
\label{app:proof_usc}

    \emph{a)} Since $ \mathcal{T}_k$ and $ \mathcal{X}$ are closed, $ 1_{\mathcal{T}_k}(\cdot),\ \forall k\in \mathbb{N}_{[0,N]}$ is \usc{} over $ \mathcal{X}$.
    Hence, $V_N^\ast(\cdot)$ is \usc{} over $ \mathcal{X}$.

    Consider the base case $k=N-1$. Since $ \mathcal{T}_N$
    is closed, $V_{N}^\ast(\cdot)$ is u.s.c., and $V_{N}^\ast(\cdot)$ is bounded and non-negative by \eqref{eq:boundedV}.
    Hence, $\int_{ \mathcal{X}}
    V_{N}^\ast(\overline{y})Q_{N-1}(d\overline{y}\vert\overline{x},\overline{u})$
    is \usc{} over $ \mathcal{X}\times \mathcal{U}$ by Lemma~\ref{lem:usc_intg}.
    By a selection result for semicontinuous cost
    functions~\cite[Prop. 7.33]{BertsekasSOC1978} and
    compactness of $ \mathcal{U}$, an optimal
    Borel-measurable input map $\mu_{N-1}^\ast(\cdot)$
    exists and $\int_{ \mathcal{X}}
    V_{N}^\ast(\overline{y})Q_{N-1}(d\overline{y}\vert\overline{x},\mu_{N-1}^\ast(\overline{x}))$ is \usc{} over $ \mathcal{X}$. 
    Since upper semicontinuity is preserved under multiplication~\cite[Props. B.1]{PuttermanMarkov2005}, $V_{N-1}^\ast(\cdot)$ is \usc{} over $ \mathcal{X}$ by \eqref{eq:Vt_recursionQ}.

    For the case $k\in \mathbb{N}_{[0,N-2]}$, assume for induction that $V_{k+1}^\ast(\cdot)$ is \usc{}.  
    By the same arguments as above, a Borel-measurable $\mu_k^\ast(\cdot)$
    exists and $V_{k}^\ast(\cdot)$ is \usc{}.
 
    For every $k\in \mathbb{N}_{[0,N]}$, the upper
    semi-continuity of $V_k^\ast(\cdot)$ implies that the
    set $\SRAsetk$ is closed for $\alpha\in[0,1]$.

    \emph{b)} By Propositions~\ref{prop:contains}
    and~\ref{prop:usc}\ref{prop:usc_closed} and Heine-Borel
    theorem.  \myqed

\subsection{Proof of Proposition~\ref{prop:W_prop}}
\label{app:proof_W_prop}
    \emph{\ref{prop:W_prop_usc})} Similarly to the proof of
    Proposition~\ref{prop:usc}
    (Appendix~\ref{app:proof_usc}).

    \emph{\ref{prop:W_prop_OL_use})}
    By induction, we have $W_0( \overline{x}, \overline{U})\leq
    V_0^\ast( \overline{x})$ for every $ \overline{x}\in \mathcal{X}$ and $
    \overline{U}\in \mathcal{U}^N$.  
    Consequently, $W_0^\ast( \overline{x})=\sup_{ \overline{U}\in
    \mathcal{U}^N}W_0( \overline{x}, \overline{U})\leq V_0^\ast( \overline{x})$
    by \eqref{eq:W_ast} and the definition of the supremum.
    Consequently, $\FSRAset\subseteq \SRAset$ for every
    $\alpha\in[0,1]$ by definition.%

    \emph{\ref{prop:W_prop_convex})} Similarly to the proof of Theorem~\ref{thm:convex}, we can show by induction that $W_0( \overline{x}_0, \overline{U})$ is log-concave~in $ \mathcal{X}\times\mathcal{U}^N$ when $ \mathcal{T}_k$ is convex and $\psibwk$ is log-concave.
    Additionally, $ \mathcal{U}^N$ is convex since $ \mathcal{U}$ is convex~\cite[Sec. 2.3.2]{BoydConvex2004}.
    Hence, \eqref{eq:W_ast} (and thereby \eqref{prob:OL}) is a log-concave optimization.
    Since partial supremum over convex sets preserves log-concavity~\cite[Sec.
    3.2, 3.5]{BoydConvex2004}, $W_0^\ast(\cdot)$ is log-concave over $
    \mathcal{X}$. %

    \emph{\ref{prop:W_prop_cvx_cmpt})} By Proposition~\ref{prop:W_prop}\ref{prop:W_prop_usc}
    and~\ref{prop:W_prop}\ref{prop:W_prop_convex}, $\FSRAset$ for every
    $\alpha\in(0,1]$ is convex and closed by \eqref{eq:FstochRAset}. By
    Proposition~\ref{prop:cvx_cmpt}, $\SRAset$ for every $\alpha\in(0,1]$ is
    convex and compact. The proof follows from the fact that closed
    subsets of compact sets are compact~\cite[Cor. 12.5.6]{TaoAnalysisII}. \myqed

\end{document}